\let\realItem\item
\NewDocumentCommand\myItem{ o }{%
   \IfNoValueTF{#1}%
      {\realItem}
      {\realItem[#1]\def\@currentlabel{#1}}
}
\setlist[enumerate]{
  topsep=1pt,
  leftmargin=30pt,
  before=\let\item\myItem,
  label=\textnormal{(\arabic*)},
  widest=(a2')
}
  \def\hypersetup#1{}%
  \let\Cref\crtCref
  \let\cref\crtcref
\patchcmd{\ttlh@hang}{\parindent\z@}{\parindent\z@\leavevmode}{}{}
\patchcmd{\ttlh@hang}{\noindent}{}{}{}
\newcommand\numberthis{\addtocounter{equation}{1}\tag{\theequation}}
\theoremstyle{plain}
\newtheorem{theorem}{Theorem}[section]
\newtheorem{lemma}[theorem]{Lemma}
\newtheorem{proposition}[theorem]{Proposition}
\newtheorem{corollary}[theorem]{Corollary}
\providecommand{\customgenericname}{}
\newcommand{\newcustomtheorem}[2]{%
  \newenvironment{#1}[1]
  {%
   \renewcommand\customgenericname{#2}%
   \renewcommand\theinnercustomgeneric{##1}%
   \innercustomgeneric
  }
  {\endinnercustomgeneric}
}
\def\XXint#1#2#3{{\setbox0=\hbox{$#1{#2#3}{\int}$ }
\vcenter{\hbox{$#2#3$ }}\kern-.6\wd0}}
\theoremstyle{definition}
\theoremstyle{remark}
\newtheorem{remark}[theorem]{Remark}
\newtheorem*{remark*}{Remark}
\numberwithin{equation}{section}
\DeclareMathOperator*{\loc}{loc}
\DeclareMathOperator*{\supp}{supp}
\newcommand{\LHAs}{\mathit{h}^p_s(A)}
\newcommand{\LHA}{\mathit{h}^p(A)}
\newcommand{\LHB}{\mathit{h}^p(B)}
\newcommand{\LHBo}{\mathit{h}^1(B)}
\newcommand{\LHAo}{\mathit{h}^1(A)}
\newcommand{\suppc}{\overline{\supp} \;}
\newcommand{\Schwartz}{\mathcal{S}}
\newcommand{\Mloc}{M^{0,\loc}_{\phi,A}}
\newcommand{\Indicator}{\mathds{1}}
\newcommand{\TLzero}{\mathbf{F}^{0}_{p,q}}
\newcommand{\TLone}{\mathbf{F}^{\alpha}_{p_1,q_1}}
\newcommand{\TLtwo}{\mathbf{F}^{\beta}_{p_2,q_2}}
\newcommand{\TL}{\mathbf{F}^{\alpha}_{p,q}}
\NewDocumentCommand\TLseq{O{\alpha}}{\mathbf{f}^{#1}_{p,q}}
\newcommand{\TLA}{\mathbf{F}^{\alpha}_{p,q}(A)}
\newcommand{\TLB}{\mathbf{F}^{\alpha}_{p,q}(B)}
\newcommand{\TLseqA}{\mathbf{f}^{\alpha}_{p,q}(A)}
\NewDocumentCommand\PTseq{O{\alpha}D<>{\beta}}{\mathbf{p}^{#1, #2}_{\infty,q}}
\newcommand{\vertiii}[1]{{\left\vert \kern-0.25ex
                            \left\vert \kern-0.25ex
                              \left\vert #1\right\vert\kern-0.25ex
                            \right\vert \kern-0.25ex
                          \right\vert}}
\NewDocumentCommand\DoubleStar{O{\varphi}m}{#1_{#2,\eta}^{\ast\ast}}
\newcommand{\eps}{\varepsilon}
\renewcommand{\emptyset}{\varnothing}
\newcommand{\CalB}{\mathcal{B}}
\newcommand{\Measure}{\mathrm{m}}
\newcommand{\Lebesgue}[1]{\Measure( #1)}
\newcommand{\GL}{\operatorname{GL}}
\newcommand{\Fourier}{\mathcal{F}}
\DeclareFontFamily{U}{mathx}{\hyphenchar\font45}
\DeclareFontShape{U}{mathx}{m}{n}{
      <5> <6> <7> <8> <9> <10>
      <10.95> <12> <14.4> <17.28> <20.74> <24.88>
      mathx10
      }{}
\DeclareSymbolFont{mathx}{U}{mathx}{m}{n}
\DeclareMathAccent{\widecheck}{0}{mathx}{"71}
\DeclareMathAccent{\wideparen}{0}{mathx}{"75}
\newcommand{\R}{\mathbb{R}}
\newcommand{\CC}{\mathbb{C}}
\newcommand{\N}{\mathbb{N}}
\newcommand{\Z}{\mathbb{Z}}
\newcommand{\EE}{\mathbb{E}}
\title[Classification of anisotropic local Hardy spaces and Triebel-Lizorkin spaces]{Classification of anisotropic local Hardy spaces \\ and inhomogeneous Triebel-Lizorkin spaces}
\author{Jordy Timo van Velthoven}
\address{Faculty of Mathematics,
University of Vienna,
Oskar-Morgenstern-Platz 1,
A-1090 Vienna, Austria}
\email{jordy.timo.van.velthoven@univie.ac.at}
\author{Felix Voigtlaender}
\address[FV]{Mathematical Institute for Machine Learning and Data Science (MIDS),
Catholic University of Eichstätt–Ingolstadt (KU),
Auf der Schanz 49, 85049 Ingolstadt, Germany}
\email{felix@voigtlaender.xyz; felix.voigtlaender@ku.de}
\subjclass[2020]{42B25, 42B30 , 42B35, 46E35}
\keywords{Anisotropic function spaces,
Coarse equivalence,
Expansive matrices,
Inhomogeneous function spaces,
Triebel-Lizorkin spaces.}
\begin{document}

\begin{abstract}
This paper provides a characterization of when two expansive matrices
yield the same anisotropic local Hardy and inhomogeneous Triebel-Lizorkin spaces.
The characterization is in terms of the coarse equivalence of certain quasi-norms
associated to the matrices.
For nondiagonal matrices, these conditions are strictly weaker
than those classifying the coincidence of the corresponding homogeneous function spaces.
The obtained results complete the classification of anisotropic Besov and Triebel-Lizorkin spaces
associated to general expansive matrices.
\end{abstract}

\maketitle

\section{Introduction}

For an expansive matrix $A \in \mathrm{GL}(d, \R)$, consider Schwartz functions
$\varphi, \Phi \in \Schwartz(\R^d)$ whose Fourier transforms
$\widehat{\varphi}, \widehat{\Phi}$ satisfy the support conditions
\[
  \suppc \widehat{\varphi} \subseteq (- \tfrac{1}{2}, \tfrac{1}{2})^d \setminus \{0\}
  \quad \text{and} \quad
  \suppc \widehat{\Phi} \subseteq (-\tfrac{1}{2}, \tfrac{1}{2})^d,
\]
and the positivity condition
\[
  \sup_{i \in \N} \max \{ |\widehat{\varphi} ((A^*)^{-i} \xi)|, \; |\widehat{\Phi} (\xi)| \} > 0
  \quad \text{for all} \quad \xi \in \R^d.
\]
The associated \emph{inhomogeneous Triebel-Lizorkin space} $\TL(A)$ with $\alpha \in \R$,
$p \in (0,\infty)$ and $q \in (0, \infty]$ is defined as the space
of all tempered distributions $f \in \mathcal{S}'(\R^d)$ such that
\begin{equation} \label{eq:TL_def_intro}
  \| f \ast \Phi \|_{L^p}
  + \bigg\|
      \bigg(
        \sum_{i=1}^{\infty}
          (|\det A|^{\alpha i}
          |f \ast \varphi^A_i |)^q
      \bigg)^{1/q}
    \bigg\|_{L^p}
  < \infty,
\end{equation}
where $\varphi_i^A := |\det A|^i \varphi(A^i \cdot)$ for $i \in \N$,
with the usual modification for $q = \infty$.
For a general expansive matrix $A$, the spaces $\TL(A)$ were first introduced in \cite{bownik2006atomic}
and have been further studied in, e.g., \cite{betancor2010anisotropic, bownik2007anisotropic, liu2019littlewood1, hu2013littlewood}.
The scale of Triebel-Lizorkin spaces $\TL(A)$ includes, among others,
the Lebesgue spaces $L^p =  \mathbf{F}^0_{p, 2}(A)$ for $p \in (1,\infty)$,
and the anisotropic local Hardy spaces $\LHA = \mathbf{F}^0_{p, 2} (A)$ for $p \leq 1$;
see Section \ref{sec:hardy} for its definition.

The aim of the present paper is to determine when two expansive matrices $A, B \in \mathrm{GL}(d, \R)$
define the same inhomogeneous Triebel-Lizorkin space $\TL(A) = \TL(B)$.
For diagonal matrices with positive anisotropy, the question
of whether the associated Triebel-Lizorkin space depends on the choice of such anisotropy was considered
in \cite{triebel2004wavelet} (see also \cite[Section 5.3]{triebel2006theory}).
For two such matrices $A$ and $B$, it can be shown that the associated spaces $\TL(A)$ and $\TL(B)$
coincide precisely if $A = B^{c}$ for some $c > 0$;
or if $p \in (1,\infty)$, $q = 2$, and $\alpha = 0$, so that $L^p =  \mathbf{F}^0_{p, 2}(A) = \mathbf{F}^0_{p, 2}(B)$.
The same question for function spaces associated to general expansive matrices
is more delicate and was investigated first for anisotropic Hardy spaces $H^p(A)$,
$p \in (0,1]$ (see Section \ref{sec:hardy} for a definition):
In \cite[Chapter 1, Theorem 10.5]{bownik2003anisotropic},
it was shown that $H^p(A) = H^p(B)$ for some (equivalently, all)
$p \in (0,1]$ if, and only if, two homogeneous quasi-norms $\rho_A$ and $\rho_B$
associated to $A$ and $B$ are equivalent, in the usual sense of quasi-norms.
Corresponding results for homogeneous anisotropic Besov and Triebel-Lizorkin spaces
were only more recently obtained in \cite{cheshmavar2020classification}
and \cite{koppensteiner2023classification}, respectively.

In contrast to the case of \emph{homogeneous} function spaces, the equivalence
of two homogeneous quasi-norms $\rho_A$ and $\rho_B$ corresponding to
general expansive matrices $A$ and $B$ turns out to be \emph{not} necessary in general
for the coincidence of the associated \emph{inhomogeneous} function spaces.
More precisely, in \cite[Theorem 6.4]{cheshmavar2020classification},
it is shown that two inhomogeneous anisotropic Besov spaces defined by $A$ and $B$ coincide
if and only if the quasi-norms $\rho_{A^*}$ and $\rho_{B^*}$ associated to
the adjoints $A^*$ and $B^*$ are \emph{coarsely} equivalent,
which can be understood as the quasi-norms being merely equivalent at infinity (see  \Cref{sec:QuasiNorms}).
For simplicity, two expansive matrices $A$ and $B$ are said to be \emph{(coarsely) equivalent}
if their associated quasi-norms $\rho_A$ and $\rho_B$ are (coarsely) equivalent.
We mention that  various explicit and verifiable criteria for the (coarse) equivalence
of two matrices $A$ and $B$ in terms of spectral properties are contained in
\cite[Chapter 1, Section 10]{bownik2003anisotropic} and \cite[Section 7]{cheshmavar2020classification}.

In the present paper, we provide a refinement of the approach towards the classification
of homogeneous spaces \cite{koppensteiner2023classification}, and show that matrices
yielding the same scale of \emph{inhomogeneous} Triebel-Lizorkin spaces are characterized by \emph{coarse} equivalence.
Our main result is the following theorem, proven in \Cref{sub:MainTheoremProof}:

\begin{theorem} \label{thm:main}
  Let $A, B \in \GL(d, \R)$ be expansive.
  The following assertions are equivalent:
  \begin{enumerate}[label=\textnormal{(\roman*)},leftmargin=24pt]
    \item $\TL(A) = \TL(B)$ for some $(\alpha,p,q) \!\in\! \R \times (0,\infty) \times (0,\infty]$
          with $(\alpha, p, q) \!\notin\! \{ 0 \} \times (1,\infty) \times \{ 2 \}$;

    \item $\TL(A) = \TL(B)$ for all $\alpha \in \R$, $p \in (0, \infty)$, and $q \in (0,\infty]$;

    \item  $A^*$ and $B^*$ are coarsely equivalent.
  \end{enumerate}
\end{theorem}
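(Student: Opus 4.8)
The plan is to prove the cycle of implications (ii) $\Rightarrow$ (i) $\Rightarrow$ (iii) $\Rightarrow$ (ii). The implication (ii) $\Rightarrow$ (i) is immediate. For the other two, the common first step is that a purely set-theoretic identity $\TL(A) = \TL(B)$ already forces an equivalence of the corresponding quasi-norms: both are quasi-Banach spaces that embed continuously into $\Schwartz'(\R^d)$, so the identity map between them has closed graph, and the closed graph theorem for quasi-Banach spaces upgrades it to a topological isomorphism. Hence throughout we may work with quantitative two-sided norm estimates rather than mere equality of sets.

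For (iii) $\Rightarrow$ (ii), I would pass to the discrete $\varphi$-transform characterization, identifying $\|f\|_{\TL(A)}$ with the quasi-norm of a coefficient sequence indexed by a dilation level $i \in \N_0$ and a translation $k \in \Z^d$ in the sequence space $\TLseqA$, and likewise for $B$. Coarse equivalence of $\rho_{A^*}$ and $\rho_{B^*}$ is equivalent (as recorded in \Cref{sec:QuasiNorms}) to the boundedness of $|\scale_{A^*}(\xi) - \scale_{B^*}(\xi)|$ for large $|\xi|$; concretely, there is $i_0$ such that for $i, j \ge i_0$ the frequency annuli $(A^*)^i \cdot \suppc\widehat{\varphi}$ and $(B^*)^j \cdot \suppc\widehat{\varphi}$ intersect only when $|i - j|$ is bounded, in which case $|\det A|^i \asymp |\det B|^j$. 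Together with the standard almost-orthogonality estimates for admissible analyzing families, this yields a two-sided comparison of the high-frequency parts ($i \ge i_0$) of the two quasi-norms. The finitely many remaining low-frequency pieces ($1 \le i < i_0$), together with the single term $\|f \ast \Phi\|_{L^p}$, form a contribution whose Fourier transforms are supported in one fixed compact set; such a contribution is comparable to a matrix-independent quantity and is therefore the same for $A$ and $B$ up to constants. Assembling both estimates gives $\|f\|_{\TL(A)} \asymp \|f\|_{\TL(B)}$ for every admissible $(\alpha, p, q)$, which is (ii).

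The core is (i) $\Rightarrow$ (iii). Fix one triple $(\alpha, p, q)$ as in (i); by the first step $\|\cdot\|_{\TL(A)} \asymp \|\cdot\|_{\TL(B)}$. Probe this with test functions adapted to $A^*$: for a large level $i$, a point $\xi_0$ with $\scale_{A^*}(\xi_0) = i$, and a translation parameter, let $f$ have $\widehat{f}$ a smooth bump of controlled height supported in a small $(A^*)^i$-adapted neighbourhood of $\xi_0$, modulated so that $f$ is spatially localized. By the support conditions on $\widehat{\varphi}, \widehat{\Phi}$, essentially only the $i$-th Littlewood--Paley piece (and its immediate neighbours) is active on the $A$-side, so $\|f\|_{\TL(A)}$ equals an explicit power of $|\det A|$ (depending on $\alpha, p$) times a universal constant. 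On the $B$-side the active levels $j$ are exactly those whose annulus $(B^*)^j \cdot \suppc\widehat{\varphi}$ meets the (small) support of $\widehat{f}$; matching $\|f\|_{\TL(B)}$ against the known value of $\|f\|_{\TL(A)}$, and letting $\xi_0$, $i$ and the spatial localization vary, forces first that this set of levels stays within a bounded distance of a single value $j = j(\xi_0, i)$ with $|i - j|$ uniformly bounded, and second that the $(A^*)^i$- and $(B^*)^{j}$-adapted geometries around $\xi_0$ are mutually comparable. Reading this off over all large $i$ and all $\xi_0$ says exactly that $\rho_{A^*}$ and $\rho_{B^*}$ agree at infinity, i.e.\ $A^*$ and $B^*$ are coarsely equivalent.

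The main obstacle is this extraction step, specifically ruling out that the active $B$-levels ``spread'' over an unbounded range: in the inhomogeneous setting we cannot place test functions at arbitrarily low frequency (those are absorbed into the single $\Phi$-term and carry no information about the matrices), so the argument must run entirely at high frequency. This is handled by exploiting the $\ell^q(L^p)$-summability built into \eqref{eq:TL_def_intro} together with the weight $|\det A|^{\alpha i}$: an unbounded spread would either destroy $\ell^q$-summability of the $B$-side pieces or produce a weight mismatch between the two sides --- the sole exception being $\alpha = 0$, $p \in (1,\infty)$, $q = 2$, where $\TL(A) = L^p = \TL(B)$ unconditionally and the test functions are silent, which is precisely why that triple is excluded in (i). Once the bounded-spread property is in place, the geometric comparison of frequency supports proceeds as in the homogeneous classification of \cite{koppensteiner2023classification}, now localized to large $|\xi|$, and delivers coarse equivalence.
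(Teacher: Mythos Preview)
Your overall cycle (ii) $\Rightarrow$ (i) $\Rightarrow$ (iii) $\Rightarrow$ (ii) matches the paper's, and your remark that set equality implies quasi-norm equivalence via the closed graph theorem is exactly \Cref{lem:NormEquivalence}. For (iii) $\Rightarrow$ (ii) you propose going through the sequence-space/$\varphi$-transform picture; the paper instead works directly on the function side via a mixed Peetre-type maximal inequality (\Cref{lem:peetretype}) combined with the vector-valued Fefferman--Stein inequality (\Cref{prop:sufficient}). Your route is plausible but would require more work than you indicate to control the sequence-space norms across the two lattices.

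The genuine gap is in (i) $\Rightarrow$ (iii), specifically your claim that the frequency-bump tests are informative for every triple except $(\alpha,p,q) \in \{0\} \times (1,\infty) \times \{2\}$. This is false: the tests are silent whenever $\alpha = 0$ and $q = 2$, regardless of $p$. Indeed, with a single bump $f$ whose Fourier support sits in one $B^\ast$-annulus, \Cref{prop:normestimate1} gives $\|f\|_{\TLzero(B)} \asymp \|f\|_{L^p}$; with a sum of modulated bumps supported in $K$ distinct $A^\ast$-annuli inside that $B^\ast$-annulus, \Cref{prop:normestimate2} gives $\|f\|_{\mathbf{F}^0_{p,2}(A)} \asymp \delta^{d(1-1/p)} \|c\|_{\ell^2}$, and after averaging over signs (Khintchine) the $B$-side also produces $\|c\|_{\ell^2}$. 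So no contradiction arises from frequency-localized test functions, and your ``$\ell^q$-summability or weight mismatch'' dichotomy collapses precisely when $\alpha = 0$, $q = 2$ --- not only for $p > 1$.

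The paper therefore splits (i) $\Rightarrow$ (iii) into three genuinely different arguments: the case $\alpha \neq 0$ (\Cref{thm:casenonzero}) uses the single-bump test and the weight $|\det A|^{\alpha i}$; the case $\alpha = 0$, $q \neq 2$ (\Cref{thm:casenzero1}) uses multi-bump tests and Khintchine's inequality to pin down $q$; and the case $\alpha = 0$, $q = 2$, $p \in (0,1]$ (\Cref{thm:casenzero2}) abandons frequency-side tests entirely and instead exploits the identification $\mathbf{F}^0_{p,2}(A) = \LHA$ (\Cref{prop:TL_hardy}), constructing \emph{spatial-side} functions built from local $(p,s)$-atoms whose $\LHB$-norm blows up (for $p<1$) or whose weak limit is a singular measure (for $p=1$) unless $A^\ast$ and $B^\ast$ are coarsely equivalent. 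Your proposal contains no mechanism for this last case, and the argument there is substantially different in character from anything you sketch.
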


\Cref{thm:main} complements the classification of homogeneous Triebel-Lizorkin spaces in \cite{koppensteiner2023classification},
and the classification of homogeneous and inhomogeneous Besov spaces in \cite{cheshmavar2020classification}.
Combined with these previous results, \Cref{thm:main} completes the classification
of all anisotropic Besov and Triebel-Lizorkin spaces introduced in \cite{bownik2005atomic, bownik2006atomic}.
In the particular case $\alpha = 0$, $p \in (0, 1]$ and $q = 2$, \Cref{thm:main}
provides also a new result for anisotropic \emph{local} Hardy spaces \cite{betancor2010anisotropic},
and complements the classification of (nonlocal) anisotropic Hardy spaces in \cite{bownik2003anisotropic}.

The proof method for establishing \Cref{thm:main} follows the overall structure
of the classification of homogeneous Triebel-Lizorkin spaces in \cite{koppensteiner2023classification}.
The key ingredients for the sufficient condition on matrices are maximal inequalities
involving a Peetre-type maximal function (cf. \Cref{sec:sufficient}),
and the necessary condition proceeds by establishing norm estimates for auxiliary functions
and reduction to the case $p = 2$ using Khintchine's inequality (cf.\ \Cref{sec:necessary}).
Our arguments for the case $\alpha = 0$, $p \in (0,1]$ and $q = 2$ follow the overall proof structure
of \cite[Chapter 1, Theorem 10.5]{bownik2003anisotropic}, while adding a significant detail for the case $p=1$ that was missing in \cite{bownik2003anisotropic} (see \Cref{rem:example}).

Despite the similarities in the overall proof structure, the arguments in the inhomogeneous case
are  more subtle and need to be more refined than their counterparts for homogeneous function spaces
in \cite{koppensteiner2023classification,bownik2003anisotropic}, for at least the following two reasons:
\begin{enumerate}[label=\textnormal{(\arabic*)}]
 \item[1)] The coarse equivalence of $A$ and $B$ does not imply their equivalence;

 \item[2)] The coarse equivalence of $A$ and $B$ is not equivalent to that of $A^*$ and $B^*$.
\end{enumerate}
The equivalence of quasi-norms and the stability of equivalence under taking adjoints
are properties repeatedly used in \cite{koppensteiner2023classification}.
Although the notions of equivalence and coarse equivalence are equivalent for diagonal matrices,
this is not necessarily the case for nondiagonal matrices (see \cite[Remark 7.10]{cheshmavar2020classification}).
As such, various parts of the arguments in \cite{bownik2003anisotropic, koppensteiner2023classification}
require nontrivial changes and new ideas in the inhomogeneous case, which we point out throughout the text.

Lastly, we mention that as in the homogeneous case \cite{koppensteiner2023classification},
it appears that the classification of inhomogeneous Triebel-Lizorkin spaces cannot be  deduced
from the general framework of Besov-type decomposition spaces \cite{voigtlaender2023embeddings},
unlike the case of anisotropic Besov spaces \cite{cheshmavar2020classification}.

The organization of the paper is as follows:
\Cref{sec:expansive} and \Cref{sec:anisotropicfunction} are devoted to background material
on expansive matrices and inhomogeneous function spaces, respectively.
The sufficient condition for the classification of matrices is proven in \Cref{sec:sufficient},
and the necessary condition is proven in \Cref{sec:necessary}.
Some technical results are postponed to the appendices.

\subsection*{Notation}

For two functions $f_1, f_2 : X \to [0, \infty)$ on a set $X$,
we write $f_1 \lesssim f_2$ whenever there exists a constant $C > 0$
such that $f_1 (x) \leq C f_2 (x)$ for all $x \in X$.
The notation $f_1 \asymp f_2$ is used to denote that $f_1 \lesssim f_2$ and $f_2 \lesssim f_1$.
For a function $f : X \to \mathbb{C}$, we denote its (possibly nonclosed)
support by $\supp f := \{ x \in X : f(x) \neq 0 \}$ and denote its closure by $\suppc f$.

The Euclidean norm of a vector $x \in \R^d$ is denoted by $|x|$,
and we write $\mathcal{B}(x, r)$ for the associated open Euclidean ball of radius $r > 0$
and center $x \in \R^d$.
The Lebesgue measure of a measurable set $\Omega \subseteq \R^d$ is denoted by $\Measure(\Omega)$.
We write $\N := \{ k \in \Z : k \geq 1 \}$ and $\N_0 := \N \cup \{ 0 \}$.
For a multi-index $\sigma \in \N_0^d$, we define its length by $|\sigma| := \sum_{j=1}^d \sigma_j$.

The Fourier transform of a function $f \in L^1 (\R^d)$ is defined as
$\widehat{f}(\xi) = \int_{\R^d} f(x) e^{-2\pi i x \cdot \xi} \; dx$ for $\xi \in \R^d$,
where $x \cdot \xi$ denotes the ordinary dot product.
We also use the notation $\mathcal{F}$ and $\mathcal{F}^{-1}$ for the Fourier transform and its inverse.
Recall that the Fourier transform restricts to a continuous linear map
$\mathcal{F} : \Schwartz(\R^d) \to \Schwartz(\R^d)$
on the space $\Schwartz(\R^d)$ of Schwartz functions, and by duality to a continuous linear map
$\mathcal{F} : \Schwartz' (\R^d) \to \Schwartz' (\R^d)$ on the space $\Schwartz' (\R^d)$
of tempered distributions, given by $\widehat{\phi} (f) := \phi(\widehat{f}\,)$
for $\phi \in \Schwartz' (\R^d)$ and $f \in \Schwartz (\R^d)$.

For $f : \R^d \to \CC$, we define $f^\ast : \R^d \to \CC$ by
$f^\ast (x) = \overline{f(-x)}$.
The translation and modulation of a function $f : \R^d \to \mathbb{C}$
are defined as $T_y f(x) = f (x-y)$ and $M_{\xi} f(x) = e^{2\pi i x \cdot \xi} f(x)$
for $x, y, \xi \in \R^d$.
For $p \in (0,\infty)$ and a matrix $M \in \mathrm{GL}(d, \R)$,
we define the associated dilation by $D^p_M f(x) = |\det M|^{1/p} f(Mx)$.
For $A \in \R^{d \times d}$, we write $A^\ast := A^T$ for the transpose of $A$.

\section{Expansive matrices, homogeneous quasi-norms and inhomogeneous covers}
\label{sec:expansive}

This section provides background on expansive matrices and their associated spaces of homogeneous type.
In addition, various properties of covers generated by powers of expansive matrices are provided.
References for the material in this section are, e.g.,
\cite{bownik2003anisotropic, cheshmavar2020classification}.

\subsection{Expansive dilations}
\label{sec:ExpansiveMatrices}

Given a  matrix $A \in \mathbb{R}^{d \times d}$, its spectrum is denoted by $\sigma(A) \subseteq \mathbb{C}$.
A matrix $A \in \mathrm{GL}(d, \R)$ is said to be \emph{expansive}
if $|\lambda| > 1$ for all $\lambda \in \sigma(A)$.

Throughout, for an expansive matrix $A$,
let $\lambda_-(A)$ and $\lambda_+(A)$ denote two fixed numbers satisfying
\[
  1 < \lambda_-(A) < \min_{\lambda \in \sigma(A)} |\lambda|
  \qquad \text{and} \qquad
  \lambda_+(A) > \max_{\lambda \in \sigma(A)} |\lambda| ,
\]
and let $\zeta_+(A) := \ln \lambda_+(A) / \ln |\det A|$ and $ \zeta_-(A) := \ln \lambda_-(A) / \ln |\det A|$.

If $A$ is an expansive matrix, then there exists an ellipsoid $\Omega_A$, that is,
a set of the form $\Omega_{A} = \{ x \in \mathbb{R}^d : |P x| < 1 \}$
for some $P \in \mathrm{GL}(d, \mathbb{R})$, and there exists some $r > 1$ such that
\begin{align} \label{eq:expansive_ellipsoid}
  \Omega_A \subseteq r \Omega_A \subseteq A \Omega_A,
\end{align}
and, additionally, $\Lebesgue{\Omega_A} = 1$, cf.\ \cite[Chapter 1, Lemma 2.2]{bownik2003anisotropic}.
The choice of an ellipsoid satisfying \eqref{eq:expansive_ellipsoid} is not necessarily unique.
For this reason, given an expansive matrix $A$, we will fix one choice of ellipsoid $\Omega_A$
associated to $A$.

\subsection{Homogeneous quasi-norms}
\label{sec:QuasiNorms}

A \emph{homogeneous quasi-norm} associated to an expansive matrix $A$ is a measurable function
$\rho_A : \mathbb{R}^d \to [0,\infty)$ satisfying:
\begin{enumerate}
  \item[(q1)] \label{enu:QuasiNormDefinite}
              $\rho_A (x) = 0$ if and only if $x = 0$;

  \item[(q2)] \label{enu:QuasiNormDilation}
              $\rho_A (A x) = |\det A| \rho_A(x)$ for all $x \in \mathbb{R}^d$;

  \item[(q3)] \label{enu:QuasiNormTriangle}
              there exists $C > 0$ such that $\rho_A(x+y) \leq C  (\rho_A(x) + \rho_A(y))$
              for all $x, y \in \mathbb{R}^d$.
\end{enumerate}
Two homogeneous quasi-norms $\rho_A, \rho_B$ associated to expansive matrices $A$ and $B$
are said to be \emph{equivalent} if there exists $C > 0$ such that
\begin{align} \label{eq:equiv_homnorms}
  \frac{1}{C} \rho_A (x) \leq \rho_B (x) \leq C \rho_A (x) \quad \text{for all} \quad x \in \R^d.
\end{align}
Similarly, two homogeneous quasi-norms $\rho_A$ and $\rho_B$ associated to $A$ and $B$
are said to be \emph{coarsely equivalent} if there exist constants $C > 0$ and $R \geq 0$ such that
\begin{align} \label{eq:coarse_equivalent}
 \frac{1}{C} \rho_A (x)  - R \leq \rho_B (x) \leq C \rho_A (x) + R
 \qquad \text{for all } x \in \R^d.
\end{align}
Clearly, any two equivalent quasi-norms are also coarsely equivalent,
but the converse is not true in general, cf.\ \cite[Remark 7.10]{cheshmavar2020classification}. 

By \cite[Chapter 1, Lemma 2.4]{bownik2003anisotropic}, any two quasi-norms $\rho_A, \rho'_A$
associated to a fixed matrix $A$ are equivalent.
We will simply say that two expansive matrices $A$ and $B$ are \emph{equivalent}
(resp.\ \emph{coarsely equivalent}) if their associated quasi-norms are equivalent
(resp.\ coarsely equivalent).

In the sequel, we work with a specific choice of quasi-norm;
namely, we will use the so-called \emph{step homogeneous quasi-norm}
$\rho_A$ associated to $A$, defined by
\[
  \rho_A (x)
  = \begin{cases}
      |\det A|^i, & \text{if} \quad x \in A^{i+1} \Omega_A \setminus A^i \Omega_A, \\
      0,          & \text{if} \quad x = 0,
    \end{cases}
\]
where $\Omega_A$ is the fixed ellipsoid from \eqref{eq:expansive_ellipsoid};
see \cite[Chapter 1, Definition~2.5]{bownik2003anisotropic}.
For this quasi-norm, it is easy to see that it is symmetric,
in the sense that $\rho_A (x) = \rho_A(-x)$ for all $x \in \R^d$.

Lastly, we state the following characterization of coarse equivalence of two matrices,
which we will use in the proof of the main theorem.
See \cite[Lemma 4.10]{cheshmavar2020classification} for a proof.

\begin{lemma}[\cite{cheshmavar2020classification}] \label{lem:coarse_norm}
  Let $A, B \in \mathrm{GL}(d, \R)$ be expansive.
  Then
  $A$ and $B$ are coarsely equivalent if and only if
  \[
    \sup_{k \in \mathbb{N}}
      \big\| A^{-k} B^{\lfloor \varepsilon k \rfloor} \big\|
    < \infty,
  \]
  where $\varepsilon  = \varepsilon (A, B) := \ln |\det A| / \ln |\det B|$.
\end{lemma}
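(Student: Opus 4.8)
The plan is to rewrite the coarse–equivalence inequalities for the step homogeneous quasi-norms $\rho_A,\rho_B$ as inclusions between dilated ellipsoids $A^{k}\Omega_A$ and $B^{m}\Omega_B$, and then to translate such inclusions into operator-norm bounds, using that each of $\Omega_A,\Omega_B$ is squeezed between two Euclidean balls. The structural fact driving everything is that, by the definition of $\varepsilon=\varepsilon(A,B)$, one has $|\det B|^{\varepsilon k}=|\det A|^{k}$, so the matrices $M_k:=A^{-k}B^{\lfloor\varepsilon k\rfloor}$ satisfy $|\det M_k|=|\det B|^{\lfloor\varepsilon k\rfloor-\varepsilon k}\in\big(|\det B|^{-1},1\big]$; in particular $|\det M_k|$ is bounded away from $0$ and $\infty$ uniformly in $k$. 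Since $M^{-1}=(\det M)^{-1}\,\mathrm{adj}\,M$ and the entries of $\mathrm{adj}\,M$ are polynomials of degree $d-1$ in the entries of $M$, a family of invertible $d\times d$ matrices whose determinants are uniformly bounded above and below has uniformly bounded norms if and only if the family of inverses does. Applied to $(M_k)_{k\in\N}$, this shows $\sup_k\|A^{-k}B^{\lfloor\varepsilon k\rfloor}\|<\infty$ if and only if $\sup_k\|B^{-\lfloor\varepsilon k\rfloor}A^{k}\|<\infty$, and I will pass between the two forms freely; fix $C_0<\infty$ bounding both.

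Throughout, fix $0<r\le R$ with $\mathcal{B}(0,r)\subseteq\Omega_A\cap\Omega_B$ and $\Omega_A\cup\Omega_B\subseteq\mathcal{B}(0,R)$, and fix $s>1$ with $s\,\Omega_A\subseteq A\,\Omega_A$ and $s\,\Omega_B\subseteq B\,\Omega_B$ as provided by \eqref{eq:expansive_ellipsoid}, so that $A^{m}\Omega_A\supseteq\mathcal{B}(0,s^{m}r)$, $B^{m}\Omega_B\supseteq\mathcal{B}(0,s^{m}r)$ for $m\in\N_0$, while $A^{j}\Omega_A\subseteq A^{j'}\Omega_A$ and $B^{j}\Omega_B\subseteq B^{j'}\Omega_B$ whenever $j\le j'$. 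To prove that the norm condition implies coarse equivalence, assume $\sup_k\|M_k\|<\infty$. Then $\|B^{-\lfloor\varepsilon k\rfloor}A^{k}\|\le C_0$ gives $B^{-\lfloor\varepsilon k\rfloor}A^{k}\Omega_A\subseteq\mathcal{B}(0,C_0R)\subseteq B^{N}\Omega_B$, hence $A^{k}\Omega_A\subseteq B^{\lfloor\varepsilon k\rfloor+N}\Omega_B$ for all $k\in\N$, where $N\in\N$ is fixed with $s^{N}r\ge C_0R$; symmetrically, applying $\|A^{-k}B^{\lfloor\varepsilon k\rfloor}\|\le C_0$ with $k:=\lceil m/\varepsilon\rceil$ (so that $\lfloor\varepsilon k\rfloor\ge m$ and $k\le m/\varepsilon+1$) yields $B^{m}\Omega_B\subseteq B^{\lfloor\varepsilon k\rfloor}\Omega_B=A^{k}M_k\Omega_B\subseteq A^{k+N}\Omega_A=A^{\lceil m/\varepsilon\rceil+N}\Omega_A$ for all $m\in\N$. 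Now let $x\neq 0$ with $\rho_A(x)=|\det A|^{i}$. If $i\ge 1$, then $x\in A^{i+1}\Omega_A\subseteq B^{\lfloor\varepsilon(i+1)\rfloor+N}\Omega_B$, so $\rho_B(x)\le|\det B|^{\lfloor\varepsilon(i+1)\rfloor+N-1}\le|\det B|^{N-1+\varepsilon}\,|\det A|^{i}=c\,\rho_A(x)$; if $i\le 0$, then $x\in A\,\Omega_A$, a fixed bounded set on which $\rho_B$ is bounded by a constant $R_1$. Thus $\rho_B(x)\le c\,\rho_A(x)+R_1$ for all $x\in\R^{d}$, and the symmetric computation using $B^{m}\Omega_B\subseteq A^{\lceil m/\varepsilon\rceil+N}\Omega_A$ and $|\det A|^{1/\varepsilon}=|\det B|$ gives $\rho_A(x)\le c'\,\rho_B(x)+R_1'$ for all $x$; hence $A$ and $B$ are coarsely equivalent.

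Conversely, assume $A$ and $B$ are coarsely equivalent, say $\tfrac1C\rho_A-R\le\rho_B\le C\rho_A+R$. Since $\rho_A\le 1$ on $A\,\Omega_A\supseteq\Omega_A$, for every $y\in\Omega_A$ and every $k\in\N$ we get $\rho_B(A^{k}y)\le C\rho_A(A^{k}y)+R=C|\det A|^{k}\rho_A(y)+R\le(C+R)|\det A|^{k}$ (using $|\det A|>1$), uniformly in $y$. Since $\rho_B(z)\le|\det B|^{m}\iff z\in B^{m+1}\Omega_B$ and $|\det A|^{k}=|\det B|^{\varepsilon k}$, this gives $A^{k}y\in B^{\lfloor\varepsilon k\rfloor+N}\Omega_B$ for a fixed $N$, uniformly over $y\in\Omega_A$, i.e.\ $A^{k}\Omega_A\subseteq B^{\lfloor\varepsilon k\rfloor+N}\Omega_B$. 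Because $\mathcal{B}(0,r)\subseteq\Omega_A$ and $\Omega_B\subseteq\mathcal{B}(0,R)$, the matrix $B^{-\lfloor\varepsilon k\rfloor-N}A^{k}$ maps $\mathcal{B}(0,r)$ into $\mathcal{B}(0,R)$, whence $\|B^{-\lfloor\varepsilon k\rfloor-N}A^{k}\|\le R/r$ and $\|B^{-\lfloor\varepsilon k\rfloor}A^{k}\|\le\|B^{N}\|\,R/r$ uniformly in $k$; by the first paragraph, also $\sup_k\|A^{-k}B^{\lfloor\varepsilon k\rfloor}\|<\infty$.

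The main obstacle is the bookkeeping forced by the floor function combined with the genuine asymmetry between $A$ and $B$ (points (1)--(2) of the introduction): the hypothesis controls $\|A^{-k}B^{\lfloor\varepsilon k\rfloor}\|$ only for $k\in\N$, so one must be careful never to invoke control of negative powers, and the passage between the ``$A$-inside-$B$'' and ``$B$-inside-$A$'' inclusions is not symmetric — this is exactly where the observation that $|\det M_k|$ is bounded above and below, permitting free passage between $M_k$ and $M_k^{-1}$, is indispensable. A secondary subtlety is that the coarse inequality only yields the pointwise bound $\rho_B(A^{k}y)\lesssim|\det A|^{k}$ for each fixed $y$; this must be run uniformly over $y\in\Omega_A$ to upgrade to an operator-norm estimate, which is legitimate precisely because $\rho_A$ is bounded on the ellipsoid $\Omega_A$.
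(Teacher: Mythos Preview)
Your argument is correct. The key ingredients --- the uniform two-sided bound on $|\det M_k|$ coming from $|\det A|=|\det B|^{\varepsilon}$, the adjugate observation allowing free passage between $\sup_k\|M_k\|<\infty$ and $\sup_k\|M_k^{-1}\|<\infty$, and the translation between ellipsoid inclusions $A^{k}\Omega_A\subseteq B^{\lfloor\varepsilon k\rfloor+N}\Omega_B$ and operator-norm bounds via the sandwich $\mathcal{B}(0,r)\subseteq\Omega\subseteq\mathcal{B}(0,R)$ --- are all sound, and the bookkeeping with floors and ceilings is handled carefully. Note, however, that the paper does not supply its own proof of this lemma: it is quoted verbatim from \cite[Lemma~4.10]{cheshmavar2020classification}, to which the reader is referred. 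Your write-up thus provides a self-contained proof where the paper only gives a citation; there is nothing to compare against in the paper itself.
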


\subsection{Inhomogeneous covers}
\label{sec:covers}

Let $A \in \mathrm{GL}(d, \R)$ be an expansive matrix and fix an open set $Q \subseteq \mathbb{R}^d$
with compact closure $\overline{Q} \subseteq \mathbb{R}^d \setminus \{0\}$.
An \emph{inhomogeneous cover induced by $A$} is a family $ (Q^A_i)_{i \in \N_0}$
of sets $Q^A_i \subseteq \R^d$, where $Q^A_i = A^i Q$ for  $i \geq 1$, and $Q^A_0 \subseteq \R^d$
is any relatively compact open set with the property that $\bigcup_{i \in \N_0} Q^A_i = \R^d$.

For two inhomogeneous covers $(Q^A_i)_{i \in \N_0}$ and $(P^B_j)_{j \in \N_0}$
induced by expansive matrices $A$ and $B$ respectively, define, for fixed $i, j \in \N_0$,
the index sets
\begin{align} \label{eq:index1}
   J_i := \{ \ell \in \N_0 : Q^A_i \cap P^B_\ell \neq \emptyset \}
   \quad \text{and} \quad
   I_j := \{ \ell \in \N_0 : Q^A_\ell \cap P^B_j \neq \emptyset \}.
\end{align}
Moreover, given $i \in \N_0$ and $n \in \N_0$, define the index sets $i^{n*} \subseteq \N_0$ inductively as
\begin{align} \label{eq:index2}
  i^{0 *} := \{ i \}
  \qquad \text{and} \qquad
  i^{(n+1)*} := \{ j \in \N_0 : Q^A_k \cap Q^{A}_j \neq \emptyset \;\; \text{for some} \;\; k \in i^{n*} \}.
\end{align}
We will also often simply write $i^*$ for $i^{1*}$.
If we need to make clear whether the sets $i^{n \ast}$ are computed with respect to the cover
$(Q_i^A)_{i \in \N_0}$ or the cover $(P_j^B)_{j \in \N_0}$,
we write $i^{n \ast A}$ or $i^{n \ast B}$.

Following the terminology of \cite{ voigtlaender2023embeddings},
the cover $(Q^A_i)_{i \in \N_0}$ is said to be \emph{almost subordinate to} $(P^B_j)_{j \in \N_0}$
if there exists $k \in \N_0$ such that for every $i \in \N_0$ there exists $j_i \in \N_0$
with $Q_i^A \subseteq \bigcup_{j \in j_i^{k \ast}} P^B_j$.
In addition, the covers $(Q^A_i)_{i \in \N_0}$ and $(P^B_j)_{j \in \N_0}$
are said to be \emph{equivalent} if $(Q^A_i)_{i \in \N_0}$ is almost subordinate
to $(P^B_j)_{j \in \N_0}$,  and vice versa.

The following result provides a characterization of the coarse equivalence of two matrices
in terms of geometric properties of their associated inhomogeneous covers;
cf.\ \cite[Lemma 6.3]{cheshmavar2020classification}.
These properties are the ones that will actually be used/verified in our proof of \Cref{thm:main}.

\begin{lemma}[\cite{cheshmavar2020classification}] \label{lem:cover_coarse}
  Let $A, B \in \mathrm{GL}(d, \R)$ be expansive matrices and let $(Q^A_i)_{i \in \N_0}$
  and $(P^B_j)_{j \in \N_0}$ be inhomogeneous covers induced by $A$ and $B$, respectively.
  Then the following assertions are equivalent:
  \begin{enumerate}
   \item[(i)]  $A$ and $B$ are coarsely equivalent;
   \item[(ii)] $(Q^A_i)_{i \in \N_0}$ and $(P^B_j)_{j \in \N_0}$ are equivalent;
   \item[(iii)] $\sup_{i \in \N_0} |J_i| + \sup_{j \in \N_0} |I_j| < \infty$.
  \end{enumerate}
\end{lemma}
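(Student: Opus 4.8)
The plan is to prove \Cref{lem:cover_coarse} by a cyclic implication $(i) \Rightarrow (iii) \Rightarrow (ii) \Rightarrow (i)$, reducing everything to the quantitative criterion in \Cref{lem:coarse_norm} and the elementary geometry of the sets $A^i Q$ and $B^j P$.

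\smallskip

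\emph{Step 1: $(i) \Rightarrow (iii)$.}
First I would observe that only finitely many indices can be affected by the anomalous ``base'' sets $Q_0^A$ and $P_0^B$: since $\overline{Q_0^A}$ is compact and the sets $A^i Q = A^i Q$ escape to infinity in a controlled way (using $\|A^{-i}\| \to 0$ and that $\overline{Q}$ is bounded away from $0$), only finitely many $i$ satisfy $Q_i^A \cap P_0^B \neq \emptyset$, and symmetrically for the roles of $A$ and $B$. Hence it suffices to bound $|J_i|$ and $|I_j|$ for large $i,j$ using only the dyadic-type sets $A^i Q$ and $B^j P$. For those, $A^i Q \cap B^j P \neq \emptyset$ is equivalent to $Q \cap A^{-i} B^j P \neq \emptyset$. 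Writing $j = \lfloor \varepsilon i \rfloor + m$ with $\varepsilon = \varepsilon(B,A) = \ln|\det B|/\ln|\det A|$ (note the swapped roles compared to \Cref{lem:coarse_norm}), coarse equivalence of $A^*$ and... — rather, coarse equivalence of $A$ and $B$ gives via \Cref{lem:coarse_norm} a uniform bound $\sup_k \|A^{-k} B^{\lfloor \varepsilon' k\rfloor}\| < \infty$ with $\varepsilon' = \ln|\det A|/\ln|\det B|$; combined with the fact that $\|B^{\pm m}\|$ and $\|A^{\pm m}\|$ grow/decay at controlled exponential rates, one shows that $A^i Q \cap B^j P \neq \emptyset$ forces $|j - \varepsilon^{-1} i|$ (equivalently $|i - \varepsilon i \cdots|$) to lie in a bounded window independent of $i$; since for each fixed $i$ the number of such $j$ is at most the window size, $\sup_i |J_i| < \infty$, and the argument for $\sup_j |I_j|$ is symmetric (applying \Cref{lem:coarse_norm} with $A$ and $B$ interchanged, which is legitimate since coarse equivalence is symmetric).

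\smallskip

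\emph{Step 2: $(iii) \Rightarrow (ii)$.}
Assume $N := \sup_i |J_i| + \sup_j |I_j| < \infty$. To show $(Q_i^A)$ is almost subordinate to $(P_j^B)$, I fix $i$ and pick $j_i \in J_i$ arbitrarily (nonempty since the covers cover $\R^d$). The point is to show that every $\ell \in J_i$ lies in $j_i^{k*B}$ for a uniform $k$. This follows by a chaining/connectedness argument: $Q_i^A$ is contained in the union $\bigcup_{\ell \in J_i} P_\ell^B$ up to a null set, and — crucially — the relevant $P_\ell^B$ can be linked through overlaps. Here one uses that $Q$ (hence $A^i Q$) is connected, or alternatively uses the standard decomposition-space lemma (from \cite{voigtlaender2023embeddings}) that $\sup_i |J_i| < \infty$ together with $\sup_j |I_j| < \infty$ implies the covers are ``weakly equivalent,'' from which almost subordinateness with a uniform $k$ depending only on $N$ follows. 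I expect it is cleanest to cite or reproduce the short combinatorial argument: if $\ell, \ell' \in J_i$ then $P_\ell^B$ and $P_{\ell'}^B$ both meet $Q_i^A$, and iterating the bound $|I_j| \le N$ controls how far apart $\ell$ and $\ell'$ can be in the overlap graph of the $B$-cover, giving $\ell' \in \ell^{(N-1)*B}$ or similar. Swapping $A \leftrightarrow B$ gives the reverse subordinateness, hence equivalence of the covers.

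\smallskip

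\emph{Step 3: $(ii) \Rightarrow (i)$.}
If the covers are equivalent, then in particular there is $k$ such that each $A^i Q$ (for $i \ge 1$) is covered by $\bigcup_{j \in j_i^{k*B}} B^j P$; since the sets $B^j P$ for $j$ in a $k$-fold star around $j_i$ all have comparable ``size'' (their preimages under $B^{j_i}$ lie in a fixed bounded set depending only on $k$ and $P$), comparing volumes or diameters yields $|\det A|^i \asymp |\det B|^{j_i}$ and, more importantly, that $A^{-i} B^{j_i}$ maps a fixed bounded set into a fixed bounded set, i.e.\ $\sup_i \|A^{-i} B^{j_i}\| < \infty$ with $j_i \asymp \varepsilon' i$. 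A short argument shows $j_i$ can be replaced by $\lfloor \varepsilon' i \rfloor$ at the cost of a bounded factor, and then \Cref{lem:coarse_norm} gives coarse equivalence of $A$ and $B$.

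\smallskip

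\emph{Main obstacle.}
The technical heart is Step 1 (and the mirror computation inside Step 3): translating ``$A^i Q$ meets $B^j P$'' into a two-sided constraint relating $i$ and $j$ via the operator-norm bound of \Cref{lem:coarse_norm}. This requires carefully tracking both upper and lower exponential bounds for $\|A^{\pm n}\|$ and $\|B^{\pm n}\|$ in terms of $\lambda_\pm(A), \lambda_\pm(B)$ and the determinants, and handling the floor function $\lfloor \varepsilon k \rfloor$ versus a general exponent; the base-level sets $Q_0^A, P_0^B$ are a minor but genuine nuisance that must be dispatched first. Since this is exactly the content of \cite[Lemma 6.3]{cheshmavar2020classification}, I would present the argument in the structured form above and refer to that source for the routine exponential estimates.
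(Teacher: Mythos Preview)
The paper does not give its own proof of \Cref{lem:cover_coarse}; it simply quotes the result from \cite[Lemma~6.3]{cheshmavar2020classification} and moves on. So there is no in-paper argument to compare your sketch against --- your final remark (``this is exactly the content of \cite[Lemma 6.3]{cheshmavar2020classification}'') already matches what the paper does.

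That said, a quick comment on your outline as a standalone proof. Steps~1 and~3 are sound in spirit, though the bookkeeping with $\varepsilon$ versus $\varepsilon'$ and the floor function needs tightening. The more delicate point is Step~2: your chaining argument for $(iii)\Rightarrow(ii)$ tacitly uses that $Q_i^A$ is connected, so that any two $P_\ell^B$ meeting it can be linked through overlaps inside $Q_i^A$. But the definition of an inhomogeneous cover only requires $Q$ to be open with compact closure in $\R^d\setminus\{0\}$ --- no connectedness is assumed. You hedge by pointing to a general decomposition-space lemma from \cite{voigtlaender2023embeddings}, which is the right instinct, but those results typically also need some connectedness hypothesis on the cover elements. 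The clean fix is to observe first (via \Cref{lem:NeighborSetControl} applied with $A=B$) that any two inhomogeneous covers induced by the \emph{same} matrix are equivalent; hence the truth of (ii) and (iii) is independent of the particular $Q,Q_0,P,P_0$ chosen, and you may assume without loss of generality that $Q$ and $P$ are connected (e.g.\ open annuli). With that reduction in place your chaining argument goes through.
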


In the remainder of this subsection, we collect several additional observations about the index sets
defined in \Cref{eq:index1} and \Cref{eq:index2} that will be used later.
We begin with the following inclusion property for the sets defined in \Cref{eq:index2}.
Its proof is similar, but not identical, to that of \cite[Lemma~5.2]{cheshmavar2020classification}.

\begin{lemma}\label{lem:NeighborSetControl}
  Let $A \in \GL(d,\R)$ be expansive and let $(Q_i^A)_{i \in \N_0}$ be an inhomogeneous
  cover induced by $A$.
  Then there exists $M \in \N$ such that, for all $i \in \N_0$,
  \[
    i^\ast \subseteq \{ \ell \in \N_0 \,\,:\,\, |\ell - i| \leq M \}.
  \]
\end{lemma}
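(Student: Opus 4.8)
The plan is to reduce the claim to a statement about overlapping cover elements and then to measure those elements with the step homogeneous quasi-norm $\rho_A$. By the definition of $i^\ast = i^{1\ast}$ in \eqref{eq:index2}, an index $\ell$ belongs to $i^\ast$ if and only if $Q_i^A \cap Q_\ell^A \neq \emptyset$, so it is enough to produce $M \in \N$, independent of $i$, such that $Q_i^A \cap Q_\ell^A \neq \emptyset$ forces $|\ell - i| \leq M$. The mechanism to exploit is that by \eqref{eq:expansive_ellipsoid} the dilated ellipsoids $A^k \Omega_A$, $k \in \Z$, form an increasing exhaustion of $\R^d$ with $\bigcap_{k} A^k \Omega_A = \{0\}$, so that membership of a point in $Q_i^A$ confines $\rho_A$ at that point to a window of finitely many consecutive powers of $|\det A|$ whose position is governed linearly by $i$.

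First I would locate the generating set $Q$ between two dilated ellipsoids: since $\overline{Q}$ is compact with $0 \notin \overline{Q}$, and since $A^{-k}\Omega_A \subseteq r^{-k}\Omega_A$ shrinks to $\{0\}$ while $A^{k}\Omega_A \supseteq r^{k}\Omega_A$ exhausts $\R^d$, there are integers $m_1 < m_2$ with $Q \subseteq A^{m_2}\Omega_A \setminus A^{m_1}\Omega_A$; and since $Q_0^A$ is relatively compact, there is $m_0 \in \Z$ with $Q_0^A \subseteq A^{m_0}\Omega_A$. Applying $A^i$ and using $Q_i^A = A^i Q$ for $i \geq 1$, the nesting of the ellipsoids, and the definition of the step quasi-norm, one reads off
\[
  |\det A|^{\,i+m_1} \;\leq\; \rho_A(x) \;\leq\; |\det A|^{\,i+m_2-1}
  \qquad \text{for } i \geq 1 \text{ and } x \in Q_i^A,
\]
together with $\rho_A(x) \leq |\det A|^{\,m_0-1}$ for $x \in Q_0^A$; note $x \neq 0$ whenever $x \in Q_i^A$ with $i \geq 1$, since $0 \notin Q$.

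Then, given $x \in Q_i^A \cap Q_\ell^A$ with $i \leq \ell$, I would compare these ranges. If $\ell = 0$ there is nothing to prove, so assume $\ell \geq 1$. In the case $i, \ell \geq 1$, the lower bound at index $\ell$ and the upper bound at index $i$ give $|\det A|^{\,\ell+m_1} \leq |\det A|^{\,i+m_2-1}$, hence $\ell - i \leq m_2 - m_1 - 1$ because $|\det A| > 1$. In the case $i = 0 < \ell$, the lower bound at index $\ell$ and the bound for $Q_0^A$ give $\ell + m_1 \leq m_0 - 1$, hence $\ell \leq m_0 - m_1 - 1$. Choosing $M := \max\{\,m_2 - m_1 - 1,\ m_0 - m_1 - 1,\ 1\,\}$ then yields the claim. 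I do not expect a genuine obstacle here; the only points requiring care are the off-by-one bookkeeping in the step quasi-norm and the fact that the index $0$ must be treated separately because the identity $Q_i^A = A^i Q$ holds only for $i \geq 1$ — both are absorbed by the crude containments established in the first step.
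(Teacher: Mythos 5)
Your proof is correct, and it takes a genuinely different route from the paper's. The paper sandwiches the generating set $Q$ inside a Euclidean annulus $C_R = \{\,x : 1/R \le |x| \le R\,\}$ and then uses the two-sided growth estimate $\frac{1}{c}\lambda_-^{j}|x| \le |A^j x| \le c\,\lambda_+^{j}|x|$ to show that a common point of $Q_i^A$ and $Q_\ell^A$ forces $\lambda_-^{|\ell-i|}$ to stay below $c R^2$. You instead sandwich $Q$ between two dilated ellipsoids $A^{m_1}\Omega_A \subsetneq A^{m_2}\Omega_A$ and use the exact multiplicativity $\rho_A(A^i q) = |\det A|^{i}\rho_A(q)$ of the step quasi-norm, so that membership in $Q_i^A$ pins $\rho_A(x)$ to a window of $m_2 - m_1$ consecutive powers of $|\det A|$ centred around exponent $i$. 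The two mechanisms are dual: the paper's argument needs the comparison constant $c$ from the continuous‑norm estimate, while yours absorbs that slack into the fixed integer gap $m_2 - m_1$ once and for all; both ultimately bound $|\ell - i|$ by a constant determined by the "width'' of $Q$ relative to the chosen exhaustion. Your bookkeeping for the $i = 0$ case (using $Q_0^A \subseteq A^{m_0}\Omega_A$, together with $x \neq 0$ because $0 \notin Q$) is correct and mirrors the paper's separate treatment of $Q_0^A \cap Q_i^A$; the reduction from $i^\ast$ to the pairwise overlap condition is also the right reading of \eqref{eq:index2}. One small point worth spelling out if this were written up in full: the containments $Q \subseteq A^{m_2}\Omega_A$ and $Q \cap A^{m_1}\Omega_A = \emptyset$ follow from compactness of $\overline{Q}$, the fact that $(A^k\Omega_A)_{k\in\Z}$ is a nested open exhaustion of $\R^d$ with $\bigcap_k A^{k}\Omega_A = \{0\}$, and $0 \notin \overline{Q}$; you gesture at this but it is the only place requiring the expansivity hypothesis.
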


\begin{proof}
  By definition of an inhomogeneous cover induced by $A$, there exists an open set
  $Q \subset \R^d$ with compact closure $\overline{Q} \subseteq \R^d \setminus \{ 0 \}$
  and such that $Q_j^A = A^j Q$ for all $j \in \N$.
  Moreover, $Q_0^A \subseteq \R^d$ is open and relatively compact.
  Thus, we can choose $R > 0$ sufficiently large such that
  \[
    Q_0^A \subseteq \overline{\CalB}(0, R)
    \qquad \text{and} \qquad
    Q \subset C_R := \{ x \in \R^d \,\,:\,\, \tfrac{1}{R} \leq |x| \leq R \}
    .
  \]
  By \cite[Chapter 1, Section 2]{bownik2003anisotropic}, there exists a constant $c > 0$ satisfying
  \[
    \frac{1}{c} \, \lambda_-^j \, |x|
    \leq |A^j x|
    \leq c \, \lambda_+^j \, |x|
    \qquad \text{for all } j \in \N_0 \text{ and } x \in \R^d
    ,
  \]
  where $\lambda_\pm = \lambda_\pm (A) > 1$ are as in \Cref{sec:ExpansiveMatrices}.
  Fix some $M \in \N$ so large that
  \[
    M \geq \ln (c R^2) / \ln (\lambda_-).
  \]
  The remainder of the proof is divided into two cases, which together easily imply the claim.
  \\~\\
  \emph{Case 1.} We show that if $i,\ell \in \N$ satisfy $Q_i^A \cap Q_\ell^A \neq \emptyset$,
  then $|i - \ell| \leq M$.
  By symmetry, we can clearly assume that $\ell \geq i$.
  Since $\emptyset \neq A^i Q \cap A^\ell Q$, and thus
  \[
    \emptyset \neq Q \cap A^{\ell - i} Q \subseteq C_R \cap A^{\ell - i} C_R
    ,
  \]
   there exists some $x \in C_R$ such that $A^{\ell - i} x \in C_R$ as well.
  But this implies
  \[
    R 
    \geq |A^{\ell - i} x|
    \geq \frac{1}{c} \, \lambda_-^{\ell - i} \, |x|
    \geq \frac{1}{c R} \lambda_{-}^{\ell - i} ,
  \]
  and this easily implies $0 \leq \ell - i \leq \ln (c R^2) / \ln (\lambda_-) \leq M$,
  as desired.
  \\~\\
  \emph{Case 2.}
  If $Q_0^A \cap Q_i^A \neq \emptyset$ for some $i \in \N$,
  then there exists $x \in Q \subseteq C_R$ satisfying $A^i x \in Q_0^A \subseteq \overline{\CalB} (0, R)$.
  Hence,
  \[
    R
    \geq |A^i x|
    \geq \frac{1}{c} \, \lambda_-^i \, |x|
    \geq \frac{1}{c R} \, \lambda_-^i
    ,
  \]
  which yields $i \leq \ln(c R^2) / \ln (\lambda_-) \leq M$.
\end{proof}

As a consequence of the previous two lemmata, we obtain the following corollary.

\begin{corollary} \label{cor:cover_coarse}
  With notation as in \Cref{lem:cover_coarse}, the following holds:
  If $A$ and $B$ are coarsely equivalent, there exists a constant $C > 0$
  such that whenever $Q^A_i \cap P^B_j \neq \emptyset$ for some $i, j \in \N_0$, then
  \[
   \frac{1}{C} | \det B|^j \leq |\det A|^i \leq C | \det B|^j.
  \]
\end{corollary}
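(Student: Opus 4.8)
The plan is to derive \Cref{cor:cover_coarse} by combining the geometric characterization of coarse equivalence from \Cref{lem:cover_coarse}, namely the uniform boundedness of the index sets $|J_i|$ and $|I_j|$, with the scale control provided by \Cref{lem:NeighborSetControl}. The guiding observation is that $|\det A|^i$ and $|\det B|^j$ are, up to fixed multiplicative constants, the ``Lebesgue scales'' of the cover sets $Q^A_i = A^i Q$ and $P^B_j = B^j P$ for $i, j \geq 1$, so overlap of two such sets should force their scales to be comparable. The main work is to convert ``$Q^A_i \cap P^B_j \neq \emptyset$'' into a bound like ``$|i - j\varepsilon|$ is bounded'' (with $\varepsilon = \varepsilon(A,B)$), and then to exponentiate.

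Concretely, I would first reduce to the case $i, j \geq 1$: the finitely many pairs $(i,j)$ with $i = 0$ or $j = 0$ for which $Q^A_i \cap P^B_j \neq \emptyset$ are controlled by $\sup_i |J_i| + \sup_j |I_j| < \infty$ (from \Cref{lem:cover_coarse}(iii), which holds since $A, B$ are coarsely equivalent), hence only finitely many values of $i$ (resp.\ $j$) occur, and for those the desired inequality holds trivially after enlarging $C$. For $i, j \geq 1$, the strategy is: by \Cref{lem:cover_coarse}, the covers $(Q^A_i)$ and $(P^B_j)$ are equivalent, so there is $k \in \N_0$ such that each $Q^A_i$ is contained in $\bigcup_{\ell \in j_i^{k\ast B}} P^B_\ell$ for a suitable $j_i$, and vice versa. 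Using that $Q^A_i \cap P^B_j \neq \emptyset$, one gets $j \in i^{m \ast B}$ for a bounded number $m$ of ``B-steps'' away from $j_i$, which via \Cref{lem:NeighborSetControl} applied to the cover $(P^B_\ell)$ yields $|j - j_i| \leq m M_B$ for a constant $M_B$. A symmetric argument with the roles reversed, together with the almost-subordinateness relating $j_i$ and $i$, gives $|i - j| \leq M_0$ for some fixed $M_0 \in \N$ depending only on $A$, $B$, and the covers --- actually one should be careful here and instead directly estimate using the comparability $|\det A|^{i} \asymp \Measure(Q_i^A)$ and $|\det B|^j \asymp \Measure(P_j^B)$, combined with the fact that overlapping sets from equivalent covers have comparable measures (which again follows from bounded overlap and \Cref{lem:NeighborSetControl}).

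An alternative and perhaps cleaner route, which I would actually prefer to write up, avoids the covers entirely and uses \Cref{lem:coarse_norm} directly: coarse equivalence of $A$ and $B$ gives $\sup_{k} \|A^{-k} B^{\lfloor \varepsilon k \rfloor}\| < \infty$ and (by symmetry of the coarse-equivalence relation, applied with $A \leftrightarrow B$) also $\sup_k \|B^{-k} A^{\lfloor \varepsilon^{-1} k\rfloor}\| < \infty$, where $\varepsilon = \ln|\det A|/\ln|\det B|$. If $Q^A_i \cap P^B_j \neq \emptyset$ with $i, j$ large, then $A^i Q \cap B^j P \neq \emptyset$, i.e.\ $B^{-j} A^i Q \cap P \neq \emptyset$; since $Q$ and $P$ are both relatively compact and bounded away from $0$, this pins $\|B^{-j} A^i\|$ and $\|A^{-i} B^j\|$ below and above by constants depending only on $Q$ and $P$ (the image $B^{-j}A^i Q$ meets the fixed compact annulus-like set containing $P$, so it can be neither too large nor too small). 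Writing $j = \lfloor \varepsilon i \rfloor + t$ and using $\|A^{-i} B^j\| \le \|A^{-i} B^{\lfloor \varepsilon i\rfloor}\| \cdot \|B^{j - \lfloor \varepsilon i \rfloor}\| = \|A^{-i}B^{\lfloor \varepsilon i\rfloor}\|\cdot \|B^t\|$ together with the uniform bound from \Cref{lem:coarse_norm}, and similarly for the reverse product, one forces $|t|$ to be bounded by a constant; hence $|j - \varepsilon i|$ is bounded, and exponentiating (recall $|\det B|^{\varepsilon} = |\det A|$) gives $|\det A|^i \asymp |\det B|^j$ with a uniform constant.

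The main obstacle, in either approach, is bookkeeping the constants so that they are genuinely uniform in $i$ and $j$ --- in particular making sure that the passage from ``the sets overlap'' to ``$\|B^{-j}A^i\|$ is two-sided bounded'' is clean despite $Q$, $P$ and $Q_0^A$, $P_0^B$ being different sets, and handling the boundary cases $i = 0$ or $j = 0$ separately as above. None of the individual steps is deep; the lemma is essentially a repackaging of \Cref{lem:coarse_norm} (or \Cref{lem:cover_coarse}) in the form most convenient for the later arguments, and I would aim to keep the write-up to a short paragraph using the second approach.
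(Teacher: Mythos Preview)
Your first approach is essentially the paper's proof: use \Cref{lem:cover_coarse} to pass from coarse equivalence to equivalence of the covers, pick $k$ and $j_i$ from almost-subordinateness, use $Q_i^A \cap P_j^B \neq \emptyset$ together with \Cref{lem:NeighborSetControl} to trap $j_i^{k\ast B}$ inside $\{ \ell : |\ell - j| \leq M \}$, conclude $Q_i^A \subseteq \bigcup_{|\ell| \leq M} P_{j+\ell}^B$, and finish by comparing measures. Your write-up of this route is a bit muddled (``$j \in i^{m\ast B}$'' mixes $A$- and $B$-indices), but you correctly self-correct to the measure argument, which is precisely what the paper does.

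Your preferred second approach via \Cref{lem:coarse_norm} contains a gap as stated. From $B^{-j}A^i Q \cap P \neq \emptyset$ you only learn that \emph{some} vector $x$ with $|x| \asymp 1$ satisfies $|B^{-j}A^i x| \asymp 1$; this gives \emph{lower} bounds $\|B^{-j}A^i\| \gtrsim 1$ and $\|A^{-i}B^j\| \gtrsim 1$, but no upper bound on either norm (the matrix could still be huge in a transverse direction). So the claim ``pins $\|B^{-j}A^i\|$ and $\|A^{-i}B^j\|$ below and above'' is false for ``above''. Fortunately the rest of the argument can be repaired using only the lower bounds: from $c \leq \|A^{-i}B^j\| \leq \|A^{-i}B^{\lfloor \varepsilon i\rfloor}\|\,\|B^t\| \leq C'\|B^t\|$ one gets $\|B^t\| \geq c/C'$, which bounds $t$ from below (since $\|B^{-m}\| \to 0$ as $m \to \infty$); the symmetric estimate with $\|B^{-j}A^i\|$ and $\|B^{-j}A^{\lfloor j/\varepsilon\rfloor}\|$ bounds $t$ from above. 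This salvaged version is a genuine alternative to the paper's cover-based proof, trading the geometric overlap bookkeeping for spectral-norm estimates, but it is no shorter once you write out both directions and the case $i=0$ or $j=0$.
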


\begin{proof}
For ease of notation, let us set $P_j^B := P_0$ for $j \in \Z$ with $j<0$.
If $A$ and $B$ are coarsely equivalent, then the covers $(Q^A_i)_{i \in \N_0}$ and $(P^B_j)_{j \in \N_0}$
are equivalent by \Cref{lem:cover_coarse}.
Hence, there exists $k \in \N$ such that for every $i \in \N_0$, there exists
$j_i \in \N_0$ with $Q_i^A \subset \bigcup_{\ell \in j_i^{k \ast B}} P_\ell^B$.
As an easy consequence of \Cref{lem:NeighborSetControl}, there exists $M \in \N$ such that
$j^{(2 k+1)\ast B} \subseteq \{ \ell \in \N_0 \,:\, |\ell - j| \leq M \}$
for all $j \in \N_0$.

Let $i,j \in \N_0$ be such that
$\emptyset \neq Q_i^A \cap P_j^B \subseteq \bigcup_{\ell \in j_i^{k \ast B}} (P_\ell^B \cap P_j^B)$.
Then $\emptyset \neq P_\ell^B \cap P_j^B$ for some $\ell \in j_i^{k \ast B}$,
and hence $j_i \in \ell^{k \ast B} \subseteq j^{(k+1) \ast B}$,
which implies
\[
  j_i^{k \ast B}
  \subseteq j^{(2 k + 1) \ast B}
  \subseteq \{ \ell \in \N_0 \,:\, |\ell - j| \leq M \}.
\]
Therefore,
\begin{equation}
  Q_i^A
  \subseteq \bigcup_{\ell \in j_i^{k \ast B}} P_\ell^B
  \subseteq \bigcup_{\ell = -M}^M P_{j + \ell}^B
  ,
  \label{eq:EquivalenceImpliesNiceCovering}
\end{equation}
and thus
\[
  |\det A|^i
  \lesssim \Measure(Q_i^A)
  \lesssim \sum_{\ell = -M}^M \Measure(P^B_{j+\ell})
  \lesssim \sum_{\ell = -M}^M |\det B|^{j+\ell}
  \lesssim |\det B|^j.
\]
The reverse inequality follows by exchanging the role of $A$ and $B$.
\end{proof}

Lastly, we state the following adaptation of a corresponding result for homogeneous covers.
Its proof is virtually identical to that of \cite[Lemma 2.5]{koppensteiner2023classification}, and hence omitted.

\begin{lemma} \label{lem:cardinality_uniform}
Let $A, B \in \mathrm{GL}(d, \R)$ be expansive matrices with associated induced inhomogeneous covers
$(Q^A_i)_{i \in \N_0}$ and $(P^B_j)_{j \in \N_0}$, respectively.
If there exists $C>0$ satisfying
\[
  \frac{1}{C} |\det A|^{i}
  \leq  |\det B|^{j}
  \leq C |\det A|^{i}
  \quad \text{for all $i,j \in \mathbb{N}_0$ for which $Q^A_i \cap P^B_j \neq \emptyset$},
\]
then there exists $N \in \mathbb{N}$ satisfying
\[
  J_i \subseteq \bigg\{ j \in \N_0 : | j - \lfloor \varepsilon i \rfloor | \leq N \bigg\}
  \quad \text{and} \quad
  I_j \subseteq \bigg\{ i \in \N_0 : \bigg| i - \bigg\lfloor  \frac{j}{\varepsilon}  \bigg \rfloor \bigg| \leq N \bigg\}
\]
for all $i, j \in \N_0$, where $\varepsilon := \ln |\det A| / \ln |\det B|$
is as in \Cref{lem:coarse_norm}.
\end{lemma}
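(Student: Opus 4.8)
The plan is to convert the multiplicative two‑sided control provided by the hypothesis into an affine relation between the indices $i$ and $j$ simply by taking logarithms. Write $a := \ln |\det A|$ and $b := \ln |\det B|$; since $A$ and $B$ are expansive, $|\det A| > 1$ and $|\det B| > 1$, so $a, b > 0$, and by definition $\varepsilon = a/b > 0$. Everything then reduces to elementary estimates, with only the passage to integer floors requiring (minor) bookkeeping.

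First I would treat $J_i$. Fix $i \in \N_0$ and let $j \in J_i$, so that $Q_i^A \cap P_j^B \neq \emptyset$. By hypothesis, $\tfrac{1}{C} |\det A|^i \leq |\det B|^j \leq C |\det A|^i$; taking logarithms yields $|jb - ia| \leq \ln C$, and hence, dividing by $b > 0$,
\[
  |j - \varepsilon i|
  = \frac{1}{b} \, |jb - ia|
  \leq \frac{\ln C}{b}
  =: N_0 .
\]
Since $|\lfloor \varepsilon i \rfloor - \varepsilon i| < 1$, the triangle inequality gives $|j - \lfloor \varepsilon i \rfloor| < N_0 + 1$ for every $j \in J_i$, and this bound is uniform in $i$.

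The estimate for $I_j$ is symmetric: for $i \in I_j$ one again has $Q_i^A \cap P_j^B \neq \emptyset$ and thus $|jb - ia| \leq \ln C$, but this time I would divide by $a > 0$ and use $1/\varepsilon = b/a$ to obtain $|i - j/\varepsilon| \leq \ln C / a =: N_0'$, whence $|i - \lfloor j/\varepsilon \rfloor| < N_0' + 1$. Setting $N := \lceil \max\{N_0, N_0'\} \rceil + 1 \in \N$ (or any larger integer) then gives both asserted inclusions at once.

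There is no substantial obstacle here; the entire content of the lemma sits in its hypothesis, which is precisely the volume comparison furnished by \Cref{cor:cover_coarse} when $A$ and $B$ are coarsely equivalent. The only points deserving a word of care are that one must pass from the real numbers $\varepsilon i$ and $j/\varepsilon$ to their integer floors — which is absorbed by the $+1$ in the definition of $N$ — and that the boundary indices $i = 0$ or $j = 0$ need no separate treatment, since the hypothesis is assumed for all $i,j \in \N_0$ with $Q_i^A \cap P_j^B \neq \emptyset$; for instance $i = 0$ forces $1/C \leq |\det B|^j \leq C$, i.e. $j \leq N_0$, which is consistent with $\lfloor \varepsilon \cdot 0 \rfloor = 0$.
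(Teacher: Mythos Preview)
Your argument is correct and is the natural approach: the paper itself omits the proof, noting only that it is ``virtually identical'' to that of \cite[Lemma~2.5]{koppensteiner2023classification}, which proceeds by exactly this logarithm-and-floor computation. There is nothing to add.
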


\section{Anisotropic inhomogeneous function spaces}
\label{sec:anisotropicfunction}

This section provides various preliminary results on anisotropic local Hardy spaces
and inhomogeneous Triebel-Lizorkin spaces that are used in the proof of \Cref{thm:main}.
For further background and results on these spaces, see the papers \cite{betancor2010anisotropic, bownik2006atomic}.

\subsection{Inhomogeneous Triebel-Lizorkin spaces}
\label{sec:TL}
Let $A \in \mathrm{GL}(d, \R)$ be an expansive matrix.
A pair $(\varphi, \Phi)$ consisting of Schwartz functions $\varphi, \Phi \in \mathcal{S}(\mathbb{R}^d)$
is said to be an \emph{$A$-analyzing pair} if the Fourier transforms
$\widehat{\varphi}, \widehat{\Phi}$ satisfy%
\footnote{In most other papers, including \cite{bownik2006atomic}, the cube $[-\pi,\pi]^d$ is used
instead of $(-\frac{1}{2},\frac{1}{2})^d$. The reason for this seeming inconsistency is that
\cite{bownik2006atomic} uses a different normalization of the Fourier transform than we do.}
\begin{enumerate}
  \item[(c1)] \label{enu:AnalyzingPairSupportCondition}
              $\suppc \widehat{\varphi} \subseteq (-\frac{1}{2}, \frac{1}{2})^d \setminus \{0\}$
              and $\suppc \widehat{\Phi} \subseteq (-\frac{1}{2}, \frac{1}{2})^d$;
              \smallskip{}

  \item[(c2)] \label{enu:AnalyzingPairNonVanishingCondition}
              \(
                \sup_{i \in \mathbb{N}}
                  \max
                  \{
                    |\widehat{\varphi} ((A^*)^{-i} \xi) | , \;
                    |\widehat{\Phi}(\xi)|
                  \}
                > 0
              \)
              for all $\xi \in \mathbb{R}^d$.
\end{enumerate}
There always exists an $A$-analyzing pair $(\varphi, \Phi)$
that in addition to conditions \ref{enu:AnalyzingPairSupportCondition}
and \ref{enu:AnalyzingPairNonVanishingCondition} satisfies
the additional condition
\begin{enumerate}
  \item[(c3)] \label{enu:AnalyzingPairPartitionOfUnity}
              $\widehat{\Phi} (\xi) + \sum_{i \in \mathbb{N}} \widehat{\varphi}((A^*)^{-i} \xi) = 1$
              for all $\xi \in \R^d$;
\end{enumerate}
see, e.g., \cite[Section 3.3]{bownik2006atomic} and \cite[Remark 2.3]{cheshmavar2020classification}.

Following \cite{bownik2006atomic}, given an $A$-analyzing pair $(\varphi, \Phi)$,
$\alpha \in \mathbb{R}$, $0 < p < \infty$ and $0 < q \leq \infty$,
the associated \emph{inhomogeneous anisotropic Triebel-Lizorkin space}
$\TLA = \mathbf{F}^{\alpha}_{p,q}(A; \varphi, \Phi)$ is defined as the collection
of all tempered distributions $f \in \mathcal{S}' (\R^d)$ for which
\begin{equation} \label{eq:TL_def_prelims}
  \| f \|_{\TLA}
  := \| f \|_{\TL (A; \varphi, \Phi)}
  := \bigg\|
       \bigg(
         \sum_{i \in \N_0}
           (|\det A|^{\alpha i} |f \ast \varphi^A_i |)^q
       \bigg)^{1/q}
     \bigg\|_{L^p}
  <  \infty,
\end{equation}
where $\varphi^A_0 := \Phi$ and $\varphi^A_i := |\det A|^i \varphi (A^i \cdot)$ for $i \geq 1$,
and with the usual modification in \eqref{eq:TL_def_prelims} for $q = \infty$.
The quantity \eqref{eq:TL_def_prelims} is easily seen to be equivalent to the quasi-norm \eqref{eq:TL_def_intro},
a fact that will often be used without further mention.
The spaces $\TLA$ are well-defined, in the sense that they do not depend
on the choice of the $A$-analyzing pair $(\varphi, \Phi)$, cf.\ \cite[Section 3.3]{bownik2006atomic}.

In addition to the above properties, the spaces $\TLA$ are complete.
This property appears to be taken as self-evident in the literature,
but is never explicitly stated.
As this property is used repeatedly in the proof of our main result,
we provide a short proof in the appendix; see \Cref{lem:Completeness}.

\subsection{Local Hardy spaces}
\label{sec:hardy}

Let $A \in \mathrm{GL}(d, \mathbb{R})$ be an expansive matrix.
Given $\phi \in \mathcal{S}(\R^d)$ with $\int \phi \, dx \neq 0$,
the associated \emph{local radial maximal function} of $f \in \mathcal{S}'(\R^d)$ is defined as
\[
  \Mloc f(x)
  := \sup_{j \in \mathbb{N}_0}
       |\det A|^j
       \big| (f \ast (\phi \circ A^j)) (x) \big|,
  \quad x \in \mathbb{R}^d.
\]
The \emph{anisotropic local Hardy space} $\LHA$, with $p \in (0,\infty)$,
is the space of all $f \in \mathcal{S}' (\R^d)$ satisfying
\[
  \| f \|_{\LHA}
  := \big\| \Mloc f \big\|_{L^p}
  < \infty,
\]
and is complete with respect to the quasi-norm $\| \cdot \|_{\LHA}$.
The definition of $\LHA$ is independent of the choice of defining vector $\phi$.
If $p \in (1, \infty)$, then $\LHA = L^p$, and for $p = 1$ it holds that $\LHAo \subseteq L^1$.
See, e.g., \cite[Section 2]{betancor2010anisotropic} for these claims.

In a similar manner, the \emph{(nonlocal) anisotropic Hardy space} $H^p(A)$
is defined as the space of all $f \in \mathcal{S}'(\R^d)$ such that
\[
  \| f \|_{H^p (A)}
  := \| M^0_{\phi,A} f \|_{L^p} < \infty,
  \quad \text{where} \quad
  M_{\phi,A}^0 f(x)
  := \sup_{j \in \Z} |\det A|^j | f \ast (\phi \circ A^j) (x)|.
\]
Clearly, $H^p(A) \subseteq \LHA$, with $ \| f \|_{\LHA} \leq \| f \|_{H^p(A)} $ for all $f \in \mathcal{S}'(\R^d)$.
For $p \in (1,\infty)$, we have $L^p = H^p (A)$; see \cite[Chapter 1, Section 3]{bownik2003anisotropic}.

The following Littlewood-Paley characterization identifies
local Hardy spaces as special inhomogeneous Triebel-Lizorkin spaces.

\begin{proposition} \label{prop:TL_hardy}
Let $\varphi \in \mathcal{S}(\R^d)$ be a function such that
$\suppc \widehat{\varphi} \subseteq (-\frac{1}{2}, \frac{1}{2})^d \setminus \{0\}$ and
\[
  \sum_{i \in \Z} \widehat{\varphi}((A^*)^{-i} \xi) = 1,
  \quad \xi \in \R^d \setminus \{0\}.
\]
Define $\Phi \in \mathcal{S}(\R^d)$ by
$\widehat{\Phi}(\xi) = \sum^{0}_{i = - \infty} \widehat{\varphi}((A^*)^{-i} \xi)$
for $\xi \in \R^d \setminus \{0\}$ and $\widehat{\Phi}(0) = 1$.
Then, for every $p \in (0,\infty)$, the (quasi)-norm equivalence
\[
  \| f \|_{\LHA}
  \asymp \| f \ast \Phi \|_{L^p}
         + \bigg\|
              \bigg(\sum_{i =1}^{\infty}  |f \ast \varphi^A_i |^2 \bigg)^{1/2}
           \bigg\|_{L^p}
  \asymp \| f \|_{\mathbf{F}^{0}_{p,2}(A)}
\]
holds for all $f \in \mathcal{S}'(\R^d)$.
\end{proposition}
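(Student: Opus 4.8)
The plan is to prove the two claimed equivalences separately and then combine them. The second equivalence, namely
\[
  \| f \ast \Phi \|_{L^p}
  + \bigg\| \bigg( \sum_{i=1}^{\infty} |f \ast \varphi_i^A|^2 \bigg)^{1/2} \bigg\|_{L^p}
  \asymp \| f \|_{\mathbf{F}^0_{p,2}(A)},
\]
is essentially a matter of unwinding definitions: the function $\varphi$ here satisfies $\suppc \widehat{\varphi} \subseteq (-\tfrac12,\tfrac12)^d \setminus \{0\}$, and the constructed $\Phi$ satisfies $\suppc \widehat{\Phi} \subseteq (-\tfrac12,\tfrac12)^d$ (since $\widehat{\Phi}$ is a partial sum of dilates of $\widehat{\varphi}$, all of whose supports shrink toward $0$ as $i \to -\infty$). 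One must verify the nonvanishing condition (c2): for any $\xi \neq 0$, the full sum $\sum_{i \in \Z} \widehat{\varphi}((A^*)^{-i}\xi) = 1$ forces at least one term to be nonzero, and if that term has index $i \leq 0$ then $\widehat{\Phi}(\xi) \neq 0$ unless cancellation occurs — here I would instead argue more carefully that one can choose the original $\varphi$ so that $\widehat{\varphi} \geq 0$, or invoke the standard fact (from \cite{bownik2006atomic}) that any such $\varphi$ generating a homogeneous partition of unity yields, via this truncation, a legitimate $A$-analyzing pair satisfying (c1)–(c3). Given that $(\varphi,\Phi)$ is an $A$-analyzing pair, the middle quantity is literally $\| f \|_{\mathbf{F}^0_{p,2}(A;\varphi,\Phi)}$ by \eqref{eq:TL_def_prelims} with $\alpha = 0$, $q = 2$, and the independence of $\TLA$ from the analyzing pair (stated in \Cref{sec:TL}) closes this equivalence.

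For the first equivalence, $\| f \|_{\LHA} \asymp \| f \ast \Phi \|_{L^p} + \| (\sum_{i \geq 1} |f \ast \varphi_i^A|^2)^{1/2} \|_{L^p}$, I would appeal to the Littlewood–Paley theory for anisotropic local Hardy spaces developed in \cite{betancor2010anisotropic}. The standard route has two directions. For ``$\gtrsim$'': the grand/radial maximal function $\Mloc f$ dominates, up to constants, both the smooth piece $\sup_j |\det A|^j |f \ast (\phi \circ A^j)|$ truncated at $j \geq 0$ (which controls $|f \ast \Phi|$ pointwise after choosing $\phi$ compatibly, or via a standard comparison-of-maximal-functions argument) and, through a vector-valued maximal/square-function inequality, the discrete square function. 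For ``$\lesssim$'': one reconstructs $f$ from the pieces $f \ast \Phi$ and $(f \ast \varphi_i^A)_{i \geq 1}$ using a Calderón-type reproducing identity adapted to the inhomogeneous setting — here one needs auxiliary functions $\widetilde{\varphi}, \widetilde{\Phi}$ with $\widehat{\widetilde{\Phi}}\,\widehat{\Phi} + \sum_{i \geq 1} \widehat{\widetilde{\varphi}}((A^*)^{-i}\cdot)\,\widehat{\varphi}((A^*)^{-i}\cdot) = 1$ — and then estimates $\Mloc f$ by passing the local maximal operator through the reproducing formula, bounding kernel convolutions by the anisotropic Peetre maximal function and summing a geometric series in the separation between scales. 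The vector-valued Fefferman–Stein inequality in the anisotropic Muckenhoupt setting then converts the resulting Peetre maximal square function back to $\| (\sum_i |f \ast \varphi_i^A|^2)^{1/2} \|_{L^p}$ plus the $\Phi$-term.

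The main obstacle I anticipate is the reconstruction direction at and below $p = 1$, where one cannot argue termwise in $L^p$ and must handle the low-frequency piece $f \ast \Phi$ and the high-frequency pieces simultaneously while keeping the local (rather than homogeneous) maximal function under control; in particular the interaction between $\Phi$ (no vanishing moments) and the $\varphi_i^A$ (infinitely many vanishing moments) has to be managed so that the kernel decay estimates still produce a summable geometric series. I expect the cleanest presentation is to cite the relevant maximal-function and Calderón-reproducing-formula results from \cite{betancor2010anisotropic} for the local Hardy space and from \Cref{sec:sufficient} (the Peetre-type maximal inequalities) for the Triebel-Lizorkin side, so that the proof reduces to checking that the particular $(\varphi,\Phi)$ constructed in the statement is an admissible analyzing pair and that the defining $\phi$ of $\Mloc$ may be taken compatible with $\Phi$ — both of which follow from the support and partition-of-unity hypotheses together with the $\phi$-independence of $\LHA$.
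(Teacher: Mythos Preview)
Your proposal is correct in outline but takes a more labor-intensive route than the paper. The paper does not attempt a unified Peetre-maximal-function argument for all $p$; instead it splits into cases. For $p \in (0,1]$ it simply cites \cite[Theorem~1.2(ii)]{hu2013littlewood}, which already contains the Littlewood--Paley characterization of anisotropic local Hardy spaces---so the ``main obstacle'' you anticipate at $p \leq 1$ (controlling the local maximal function through a reproducing formula while handling the low-frequency piece) is sidestepped entirely by reference.

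For $p \in (1,\infty)$ the paper uses that $\LHA = L^p = H^p(A)$, and then proves $L^p \asymp \mathbf{F}^0_{p,2}(A)$ by a short duality argument rather than via Peetre maximal functions. One direction comes from the known homogeneous Littlewood--Paley equivalence $\|f\|_{L^p} \asymp \|(\sum_{i\in\Z}|f\ast\varphi_i^A|^2)^{1/2}\|_{L^p}$ (from \cite{bownik2007anisotropic,bownik2003anisotropic}) plus Young's inequality for the $\Phi$-term. The reverse direction uses a Calder\'on reproducing formula $f = f\ast\Phi\ast\Psi^* + \sum_{i\geq 1} f\ast\varphi_i^A\ast(\psi^*)_i^A$ and then, crucially, pairs against $h \in \Schwartz \cap L^{p'}$, applies Cauchy--Schwarz in $i$ and H\"older in $x$, and bounds the resulting $\psi$-square function of $h$ by $\|h\|_{L^{p'}}$ using the direction already proved. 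This duality trick is specific to $p>1$ and replaces your proposed vector-valued Fefferman--Stein step.

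Your remark on condition~(c2) overcomplicates matters slightly: since $(\varphi,\Phi)$ satisfies (c3), if $\widehat{\varphi}((A^*)^{-i}\xi)=0$ for all $i\geq 1$ then $\widehat{\Phi}(\xi)=1$, so (c2) is automatic---no nonnegativity assumption on $\widehat{\varphi}$ is needed.
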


\begin{proof}
For $p \in (0,1]$, the claim corresponds to \cite[Theorem 1.2, Part (ii)]{hu2013littlewood}.
For $p \in (1,\infty)$, recall from above that $\LHA = L^p = H^p (A)$.
Let $f \in \mathcal{S}'(\R^d)$.
First, note that
\[
 \| f \|_{L^p}
 \asymp \| f \|_{H^p (A)}
 \asymp \bigg\|
          \bigg(
            \sum_{i \in \mathbb{Z}}
              \big(
                |\det A|^i  |f \ast (\varphi \circ A^i) |
              \big)^2
          \bigg)^{1/2}
        \bigg\|_{L^p}
  ,
\]
by a combination of \cite[Theorem 7.1]{bownik2007anisotropic} and \cite[Chapter 1, Theorem 3.9]{bownik2003anisotropic}.
It follows that
\begin{align*}
  \| f \|_{\mathbf{F}^{0}_{p,2}(A)}
  &\lesssim \| f \|_{L^p} \| \Phi \|_{L^1}
           + \bigg\|
               \bigg(
                 \sum_{i \in \mathbb{Z}}
                 \big(
                   |\det A|^i |f \ast (\varphi \circ A^i) |
                 \big)^2
               \bigg)^{1/2}
             \bigg\|_{L^p}
  \lesssim \| f \|_{L^p} \\
  &\asymp \| f \|_{\LHA}. \numberthis \label{eq:TL_Lp}
\end{align*}

The reverse inequality is an adaptation of a standard argument from Littlewood-Paley theory
to the anisotropic setting.
By \cite[Section 3.3]{bownik2006atomic}, there exists another $A$-analyzing pair $(\psi, \Psi)$ such that
\[
  f
  = f \ast \Phi \ast \Psi^*
    + \sum_{i \in \mathbb{N}} f \ast \varphi^A_i \ast (\psi^*)^A_i
\]
with convergence in $\mathcal{S}'(\R^d)$; this convergence follows from \cite[Lemma 2.6]{bownik2006atomic}
(see also \cite[Section 3.3]{bownik2006atomic}).
Using this identity, it follows that
\begin{align} \label{eq:TL_Lp2}
  \| f \|_{h^p (A)}
  \asymp \| f \|_{L^p}
  \leq \| f \ast \Phi \|_{L^p} \| \Psi^* \|_{L^1}
       + \bigg\| \sum_{i \in \N} f \ast \varphi^A_i \ast (\psi^*)^A_i \bigg\|_{L^p}.
\end{align}
For estimating the second summand, we use the dual characterization of $L^p$.
Let $\langle \cdot, \cdot \rangle$ denote the sesquilinear dual pairing between $\Schwartz' (\R^d)$
and $\Schwartz (\R^d)$, which is antilinear in the second component,
and let $p' \in (1, \infty)$ denote the conjugate exponent for $p$.
If $h \in L^{p'} \cap \Schwartz (\R^d)$, then an application of the monotone convergence theorem
and the Cauchy-Schwarz inequality gives
\begin{align*}
  \bigg| \big\langle  \sum_{i \in \N} f \ast \varphi^A_i \ast (\psi^*)^A_i, h \big \rangle \bigg|
 & \leq \sum_{i \in \N}
          \big| \big\langle  f \ast \varphi^A_i, h \ast \psi^A_i \big \rangle \big| \\
 & \leq \int_{\R^d}
          \bigg(\sum_{i \in \N} |f \ast \varphi^A_i (x) |^2\bigg)^{\frac{1}{2}}
          \bigg(\sum_{i \in \N} |h \ast \psi^A_i (x)|^2\bigg)^{\frac{1}{2}}
        \; dx \\
 & \leq \bigg\|
          \bigg(\sum_{i \in \N} |f \ast \varphi^A_i  |^2\bigg)^{\frac{1}{2}}
        \bigg\|_{L^p}
        \bigg\|
          \bigg(\sum_{i \in \N} |h \ast \psi^A_i  |^2\bigg)^{\frac{1}{2}}
        \bigg\|_{L^{p'}} \\
 & \lesssim \bigg\| \bigg(\sum_{i \in \N} |f \ast \varphi^A_i  |^2\bigg)^{\frac{1}{2}} \bigg\|_{L^p}
            \| h \|_{L^{p'}},
\end{align*}
where the penultimate step used Hölder's inequality and the last step used \Cref{eq:TL_Lp}
(for $\psi$ instead of $\varphi$ and $p'$ instead of $p$).
Thus, by the dual characterization of $L^p$, the tempered distribution
$\sum_{i \in \N} f \ast \varphi^A_i \ast (\psi^*)^A_i$  satisfies
\[
  \bigg\| \sum_{i \in \N} f \ast \varphi^A_i \ast (\psi^*)^A_i \bigg\|_{L^p}
  = \sup_{\substack{h \in \Schwartz(\R^d) \\ \| h \|_{L^{p'}} \leq 1}}
      \bigg|
        \bigg\langle  \sum_{i \in \N} f \ast \varphi^A_i \ast (\psi^*)^A_i, h \bigg\rangle
      \bigg|
  \lesssim  \bigg\|
              \bigg(\sum_{i \in \N} |f \ast \varphi^A_i  |^2\bigg)^{\frac{1}{2}}
            \bigg\|_{L^p}.
\]
In combination with \Cref{eq:TL_Lp,eq:TL_Lp2}, this finishes the proof.
\end{proof}

\subsection{Local atoms}
\label{sec:atomic}

Let $p \in (0,1]$ and $s \in \mathbb{N}$ be such that $s \geq \lfloor (\frac{1}{p} -1) \zeta_-(A)^{-1}\rfloor$.
A \emph{local $(p,s)$-atom associated to $A$} is a measurable function $a : \mathbb{R}^d \to \mathbb{C}$
such that there exist $x_0 \in \mathbb{R}^d$ and $j \in \mathbb{Z}$ satisfying:
\begin{enumerate}
 \item[(a1)] \label{enu:AtomSupportCondition}
             $\supp a \subseteq x_0 + A^j \Omega_A$;

 \item[(a2)] \label{enu:AtomLInftyCondition}
              $\| a \|_{L^\infty} \leq |\det A|^{-\frac{j}{p}}$;

 \item[(a3)] \label{enu:LocalAtomVanishingMoments}
             If $j < 0$, then $\int_{\mathbb{R}^d} a(x) x^{\sigma} \; dx = 0$
             for all $\sigma \in \mathbb{N}_0^d$ with $|\sigma| \leq s$.
\end{enumerate}
In addition, we call a measurable function $a$ merely a \emph{$(p,s)$-atom associated to $A$}
if it satisfies \ref{enu:AtomSupportCondition}, \ref{enu:AtomLInftyCondition} and
\begin{enumerate}
 \item[(a4)] \label{enu:AtomVanishingMoments}
             $\int_{\mathbb{R}^d} a(x) x^{\sigma} \; dx = 0$
             for all $\sigma \in \mathbb{N}_0^d$ with $|\sigma| \leq s$.
\end{enumerate}
Clearly, any $(p,s)$-atom is a local $(p,s)$-atom.

\begin{remark} \label{rem:alternative_atom}
A useful alternative definition of (local) atoms is as follows.
Let $p \in (0,1]$ and $s \in \mathbb{N}$ be such that $s \geq \lfloor (\frac{1}{p}-1) \zeta_-(A)^{-1} \rfloor$.
An \emph{alternative local $(p,s)$-atom (resp.\ \emph{alternative $(p,s)$-atom}) associated to $A$},
is a measurable function $a : \mathbb{R}^d \to \mathbb{C}$
such that there exist $x_0 \in \mathbb{R}^d$ and $j \in \mathbb{Z}$ satisfying:
\begin{enumerate}
 \item[(a1')] \label{enu:AlternativeAtomSupportCondition}
              $\supp a \subseteq x_0 + A^j \mathcal{B}(0,1)$,
 \item[(a2')] \label{enu:AlternativeAtomLInftyBound}
              $\| a \|_{L^\infty} \leq \Measure(A^j(\mathcal{B}(0,1)))^{-\frac{1}{p}}$,
 \end{enumerate}
and \ref{enu:LocalAtomVanishingMoments} (resp.\ \ref{enu:AtomVanishingMoments}).
Any alternative (local) $(p,s)$-atom is a constant multiple of a (local) $(p,s)$-atom and vice versa,
with a constant only depending on $p$, $A$; see \cite[Remark on page 72]{bownik2003anisotropic}.
\end{remark}

By \cite[Proposition 2.2]{betancor2010anisotropic}, the local Hardy space $\LHA$
is equal to the space of all tempered distributions $f$ of the form
\begin{align} \label{eq:atomic_decomposition}
 f = \sum_{n \in \mathbb{N}} c_n a_n
\end{align}
for a sequence $(a_n)_{n \in \mathbb{N}}$ of local $(p,s)$-atoms $a_n$ associated to $A$
and $(c_n)_{n \in \mathbb{N}} \in \ell^p (\mathbb{N})$.
In addition, for every $f \in \LHA$, the quantity
\[
  \| f \|_{\LHAs}
  := \inf \bigg\{ \| c \|_{\ell^p} : f = \sum_n c_n a_n \bigg\},
\]
where the infimum is taken over all atomic decompositions \eqref{eq:atomic_decomposition}
in terms of local $(p,s)$-atoms, is equivalent to $\| f \|_{\LHA}$.

\section{Sufficient conditions for classification}
\label{sec:sufficient}

This section is devoted to proving the sufficient condition of \Cref{thm:main}
for the equality of anisotropic inhomogeneous Triebel-Lizorkin spaces. We prove this result as \Cref{prop:sufficient} below.

\subsection{General notation} \label{sec:notation1}
Throughout this section, let $A, B \in \GL(d,\R)$ be expansive matrices
and let $(\varphi, \Phi)$ and $(\psi, \Psi)$ be pairs of analyzing vectors
satisfying conditions \ref{enu:AnalyzingPairSupportCondition}-\ref{enu:AnalyzingPairPartitionOfUnity}
for $A$ and $B$, respectively.
Define $Q_0 := \supp \widehat{\Phi}$ and $Q := \supp \widehat{\varphi}$,
and set $P_0 := \supp \widehat{\Psi}$ and $P := \supp \widehat{\psi}$.
Furthermore, define $Q_i^{A^*} := (A^*)^i Q$ and $P_j^{B^*} := (B^*)^j P$
for $i, j \geq 1$ and $Q_i^{A^*} := Q_0$ and $P_j^{B^*} := P_0$ for $i, j \leq 0$.
Then the conditions \ref{enu:AnalyzingPairSupportCondition} and \ref{enu:AnalyzingPairPartitionOfUnity}
guarantee that the families $(Q^{A^*}_i)_{i \in \N_0}$
and $(P^{B^*}_j)_{j \in \N_0}$ are inhomogeneous covers induced by $A^*$ and $B^*$, respectively.
As in \Cref{sec:covers}, we define
\[
  J_i := \{ \ell \in \N_0 : Q^{A^*}_i \cap P^{B^*}_\ell \neq \emptyset \}
  \quad \text{and} \quad
  I_j := \{ \ell \in \N_0 : Q^{A^*}_\ell \cap P^{B^*}_j \neq \emptyset \},
\]
for fixed $i, j \in \N_0$.
Lastly, set $\varphi^A_0 := \Phi$ and $\varphi^A_i := |\det A|^i \varphi(A^i \, \cdot)$ for $i \geq 1$,
and define $\psi_j^B$ for $j \in \N_0$ in a similar manner (using $B$ instead of $A$).
Note that $\supp \widehat{\varphi_i^A} = Q_i^{A^*}$
and $\supp \widehat{\psi_j^B} = P_j^{B^*}$ for $i,j \in \N_0$.

\subsection{Peetre-type inequality}
\label{sec:peetretype}

Throughout the remainder of this section, we assume that the adjoint matrices $A^*$ and $B^*$
are coarsely equivalent, in the sense of \Cref{sec:QuasiNorms}.

A central ingredient in establishing the sufficient condition of \Cref{thm:main}
is an anisotropic Peetre-type inequality involving the \emph{two} dilation matrices $A$ and $B$
(cf.\ \Cref{lem:peetretype}).
For stating this result,  recall that the \emph{anisotropic Hardy-Littlewood maximal operator} $M_{\rho_A} h$
applied to a measurable function $h : \R^d \to \mathbb{C}$ is defined by
\begin{equation}
  M_{\rho_A} h (x)
  := \sup_{\mathcal{B}_A \ni x}
       \frac{1}{\Lebesgue{\mathcal{B}_A}}
       \int_{\mathcal{B}_A} |h(y)| \; dy,
  \quad x \in \mathbb{R}^d,
  \label{eq:HLMaximalOperator}
\end{equation}
where the supremum is taken over all $\rho_A$-balls
$\mathcal{B}_A = \mathcal{B}_{\rho_A}(y, r) = \{ z \in \R^d \,:\, \rho_A (z-y) < r \}$
that contain $x$.

The significance of the Peetre-type maximal function in the following lemma
for our purposes is that it involves a mixture of the matrices $A$ and $B$,
in the sense that the convolution $f \ast \psi_j^B$ involves the matrix $B$,
whereas the weight $(1 + \rho_A (A^i z))^{\eta}$ involves the matrix $A$.
Its proof exploits the coarse equivalence of $A^*$ and $B^*$ in a crucial manner.

\begin{lemma}\label{lem:peetretype}
Suppose that $A^*$ and $B^*$ are coarsely equivalent.
With notation as in \Cref{sec:notation1}, for
$j \in \N_0$, $\eta > 0$ and $f \in \mathcal{S}'(\R^d)$, define
\[
  M^{\psi}_{j, \eta} f(x)
  := \max_{i \in I_j} \,\,
       \sup_{z \in \R^d} \,\,
         \frac{|(f \ast \psi^{B}_j)(x+z)|}{(1 + \rho_A (A^i z))^{\eta}},
  \quad x \in \R^d.
\]
Then there exists $C > 0$ (independent of $j, x, f$) such that
\[
  M_{j, \eta}^{\psi} f(x)
  \leq C  \bigg( M_{\rho_A} \big[|f \ast \psi_j^{B}|^{1/\eta} \big] (x) \bigg)^{\eta},
  \quad x \in \R^d,
\]
where $M_{\rho_A} $ denotes the Hardy-Littlewood maximal operator defined in \Cref{eq:HLMaximalOperator}.
\end{lemma}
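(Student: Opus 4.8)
The plan is to establish this Peetre-type bound by a standard sampling/discretization argument, adapted to handle the two distinct matrices, with the coarse equivalence of $A^*$ and $B^*$ being invoked precisely to control the geometry relating $A$-dilates and $B$-dilates. First I would fix $j \in \N_0$ and $f \in \mathcal{S}'(\R^d)$, and observe that $g := f \ast \psi_j^B$ is a band-limited tempered distribution, with $\suppc \widehat{g} \subseteq \suppc \widehat{\psi_j^B} = P_j^{B^*} = (B^*)^j P$ (for $j \geq 1$; for $j = 0$ it is $P_0$). Since $g$ is band-limited, it is a smooth function of at most polynomial growth, so the expression $M_{j,\eta}^\psi f(x)$ makes pointwise sense. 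The key structural fact I would use is that a function whose Fourier transform is supported in a compact set $K$ can be reconstructed from its samples, and more importantly satisfies a reproducing formula $g = g \ast \Theta_K$ for a suitable Schwartz function $\Theta_K$ with $\widehat{\Theta_K} \equiv 1$ on $K$; by rescaling, if $\suppc \widehat{g} \subseteq (B^*)^j P$, we may take $\Theta_K = |\det B|^j \theta(B^j \cdot)$ for a fixed Schwartz $\theta$ whose Fourier transform is $1$ on a fixed neighborhood of $P$.

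The heart of the argument is then the following: for $i \in I_j$, we have $Q_\ell^{A^*} \cap P_j^{B^*} \neq \emptyset$ for $\ell = i$, and by \Cref{cor:cover_coarse} (applied to the coarsely equivalent matrices $A^*$ and $B^*$) this forces $|\det A|^i \asymp |\det B|^j$ with a uniform constant. Consequently the matrices $A^i$ and $B^j$ are "comparable in scale," and more precisely \Cref{lem:coarse_norm} together with \Cref{lem:cardinality_uniform} gives $\sup_{i \in I_j, j \in \N_0} \|A^{-i} B^{j}\| < \infty$ as well as $\sup \|B^{-j} A^{i}\| < \infty$, uniformly over all admissible pairs $(i,j)$. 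Using the reproducing formula $g = g \ast (|\det B|^j \theta(B^j \cdot))$, I would write, for fixed $x, z$,
\[
  |g(x+z)|
  = \bigg| \int_{\R^d} g(x + z - y)\, |\det B|^j\, \theta(B^j y) \, dy \bigg|
  \leq \int_{\R^d} |g(x+z-y)|\, |\det B|^j\, |\theta(B^j y)| \, dy ,
\]
then substitute and estimate. Dividing by $(1 + \rho_A(A^i z))^\eta$ and using the quasi-triangle inequality for $\rho_A$ in the form $(1 + \rho_A(A^i z)) \lesssim (1 + \rho_A(A^i(z - y)))(1 + \rho_A(A^i y))$, together with $(1 + \rho_A(A^i(x+z-y) - A^i x))$-type bookkeeping, reduces the problem to bounding
\[
  \sup_{w \in \R^d}
    \frac{|g(x+w)|}{(1+\rho_A(A^i w))^\eta}
  \cdot
  \bigg(
    \sup_{w} (1 + \rho_A(A^i w))^\eta \, |\det B|^j \, |\theta(B^j w)|
    \ \text{smoothed out}
  \bigg),
\]
i.e.\ an a priori-finiteness bootstrap plus a convolution against an $L^1$-normalized rapidly decaying kernel. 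The rapid decay of $\theta$ combined with $\|A^i B^{-j}\| \lesssim 1$ ensures that $w \mapsto (1+\rho_A(A^i w))^{\eta} |\det B|^j |\theta(B^j w)|$ is dominated by an integrable function uniformly in $i \in I_j$ and $j$; this is where comparability of scales is essential.

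Finally, to pass from the kernel estimate to the Hardy–Littlewood maximal function I would use the standard device: an integrable radially-decreasing-type majorant (here, an anisotropic analogue adapted to $\rho_A$) convolved with $|g|$ is pointwise bounded by $M_{\rho_A}[|g|](x)$; but since we only have $\eta$-th powers available after the bootstrap, one applies this to $|g|^{1/\eta}$ and takes $\eta$-th powers, exactly yielding $C\,(M_{\rho_A}[|g|^{1/\eta}](x))^\eta$. The a priori finiteness of $M_{j,\eta}^\psi f(x)$ (needed to run the bootstrap that absorbs a factor $\tfrac12$, say) follows from the polynomial growth of the band-limited function $g$ and the positivity of $\eta$, after first establishing the estimate with $\eta$ replaced by a larger exponent and then interpolating, or alternatively by a standard truncation argument. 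I expect the main obstacle to be the careful uniform-in-$(i,j)$ control of the weight transfer between the $A$-anisotropy and the $B$-anisotropy: one must ensure every implied constant depends only on $A, B, \eta$ (and the fixed analyzing pairs) and not on $j$ or on which $i \in I_j$ is chosen, and this is exactly where \Cref{cor:cover_coarse}, \Cref{lem:cardinality_uniform}, and the boundedness of $\sup\|A^{-i}B^j\|$ must be deployed in concert rather than separately.
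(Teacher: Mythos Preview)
Your approach differs from the paper's and, while plausibly completable, is considerably more involved. The paper avoids the bootstrap and the explicit weight-transfer entirely by a rescaling trick: setting $g := (f \ast \psi_j^B) \circ A^{-i}$, one computes $\suppc \widehat{g} \subseteq (A^*)^{-i} \overline{P_j^{B^*}}$, and the coarse equivalence of $A^*$ and $B^*$ (via the cover equivalence from \Cref{lem:cover_coarse}) forces $P_j^{B^*} \subseteq \bigcup_{|\ell|\leq M} Q_{i+\ell}^{A^*}$, so that $(A^*)^{-i}\overline{P_j^{B^*}}$ lies in a \emph{fixed} compact set $K^*$ independent of $i,j$. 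One then applies the standard single-matrix anisotropic Peetre inequality \cite[Lemma 3.4]{bownik2006atomic} to $g$ as a black box, and the dilation identity $M_{\rho_A}[h\circ A^k] = (M_{\rho_A} h)\circ A^k$ transfers the result back. No reproducing formula, no explicit operator-norm bounds on $A^i B^{-j}$, and no a~priori finiteness argument are needed.

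Your route instead re-derives a mixed-matrix Peetre inequality at the original scale. The operator-norm bounds $\|A^i B^{-j}\|,\|B^j A^{-i}\|\lesssim 1$ you invoke do follow from the cited lemmas, and the kernel integrability you sketch is correct. However, the step you describe as ``reduces the problem to bounding $\sup_w \frac{|g(x+w)|}{(1+\rho_A(A^iw))^\eta}$ times an $L^1$-kernel'' yields only the tautology $M_{j,\eta}^\psi f(x) \leq C\,M_{j,\eta}^\psi f(x)$; to actually reach the Hardy--Littlewood bound one needs the sub-mean-value inequality $|g|^{1/\eta} \lesssim |g|^{1/\eta} \ast (\text{scaled kernel})$ applied with exponent $1/\eta$ \emph{before} passing to the sup, which you only allude to in the final paragraph. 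The a~priori finiteness issue you flag is genuine and your proposed resolutions (``larger exponent then interpolate'' or ``truncation'') are vague; this is exactly the technical wrinkle that citing \cite[Lemma 3.4]{bownik2006atomic} in the paper's approach circumvents. Your approach buys self-containedness and makes the role of the norm bounds explicit, at the cost of reproving a known inequality and handling a delicate bootstrap; the paper's buys brevity and robustness by reducing to the single-matrix case.
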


\begin{proof}
Let $i \in I_j \subseteq \N_0$ be arbitrary.
Since $A^*$ and $B^*$ are coarsely equivalent, the associated covers $(Q^{A^*}_i)_{i \in \N_0}$
and $(P^{B^*}_j)_{j \in \N_0}$ from \Cref{sec:notation1} are equivalent by \Cref{lem:cover_coarse}.
Therefore, we see as in the proof of \Cref{cor:cover_coarse} (see \Cref{eq:EquivalenceImpliesNiceCovering})
that there exists $M \in \N$ (independent of $i,j$) such that
$\supp \widehat{\psi_j^B} = P_j^{B^*} \subseteq \bigcup_{\ell = - M}^M Q^{A^*}_{i + \ell}$.
Let
\[
  K
  := \bigcup_{\ell = - M}^M
       \overline{Q^{A^*}_{\ell}}
     \cup \bigcup_{\ell = -M}^M (A^*)^{\ell} \overline{Q}
  \qquad \text{and} \qquad
  K^* := \overline{\bigcup_{\ell = - \infty}^0 (A^*)^{\ell} K}.
\]
Note that $K \subseteq K^*$ and that $K,K^\ast$ are compact in $\R^d$ and do not depend on $i,j$.

Define $g := (f \ast \psi_j^B) \circ A^{-i}$.
Denoting the bilinear dual pairing between $\mathcal{S}'(\R^d)$ and $\mathcal{S}(\R^d)$
by $\langle \cdot , \cdot \rangle$, a direct calculation entails that,
for $\gamma \in \mathcal{S}(\R^d)$ with $\suppc \gamma \subseteq \R^d \setminus (A^\ast)^{-i} \overline{P_j^{B^\ast}}$,
\begin{align*}
 \langle \widehat{g} , \gamma \rangle
 &= \langle \widehat{f \ast \psi_j^B}, \gamma \circ (A^*)^{-i} \rangle
  = \langle \widehat{f}, \widehat{\psi_j^B} \cdot (\gamma \circ (A^*)^{-i}) \rangle
  = 0,
\end{align*}
and thus
\(
  \suppc \widehat{g}
  \subseteq (A^*)^{-i} \overline{P_j^{B^\ast}}
  \subseteq \bigcup_{\ell = -M}^M (A^*)^{-i} \overline{Q_{i + \ell}^{A^*}}
  .
\)
Note for $-M \leq \ell \leq M$ that if $i + \ell \leq M$, then $\overline{Q^{A^*}_{i + \ell}} \subseteq K$
and thus $(A^*)^{-i} \overline{Q^{A^*}_{i + \ell}} \subseteq K^*$.
On the other hand, $i+\ell > M$ for $-M \leq \ell \leq M$ implies $i > 0$ and
\[
  (A^*)^{-i} \overline{Q^{A^*}_{i+\ell}}
  = (A^\ast)^{-i} (A^\ast)^{i + \ell} \overline{Q}
  = (A^*)^{\ell} \overline{Q}
  \subseteq K
  \subseteq K^*
  .
\]
Overall, this shows that $\suppc \widehat{g} \subseteq K^*$.
An application of the anisotropic Peetre inequality (cf.\ \cite[Lemma 3.4]{bownik2006atomic})
therefore yields a constant $C = C(K^*, \eta) > 0$ such that
\begin{align}
  \sup_{z \in \R^d} \frac{|g(x-z)|}{(1 + \rho_A (z))^{\eta}}
  \leq C  [ (M_{\rho_A} |g|^{1/\eta}) (x) ]^{\eta},
  \quad x \in \R^d.
\end{align}
In view of the identity $M_{\rho_A} [h \circ A^k ] = (M_{\rho_A} h) \circ A^k$
for $h : \R^d \to \mathbb{C}$ and $k \in \Z$
(see, e.g., \cite[Lemma 3.1]{koppensteiner2023anisotropic1})
and since $\rho_A (-x) = \rho_A(x)$, this finally implies that
\begin{align*}
 \sup_{z \in \R^d} \frac{|(f \ast \psi_j^B)(x+z)|}{(1+ \rho_A(A^i z))^{\eta}}
 & = \sup_{z \in \R^d} \frac{|g(A^i(x + z))|}{(1+ \rho_A(A^i z))^{\eta}} \\
 & = \sup_{w \in \R^d} \frac{|g(A^i x - w)|}{(1+ \rho_A(w))^{\eta}} \\
 & \leq C  \big[ \big(M_{\rho_A} |g|^{1/\eta} \big) (A^i x) \big]^{\eta} \\
 & = C  \big[ \big(M_{\rho_A} (|g|^{1/\eta} \circ A^i) \big) (x) \big]^{\eta} \\
 & = C  \big[ \big(M_{\rho_A} |f \ast \psi_j^B|^{1/\eta}  \big) (x) \big]^{\eta}.
\end{align*}
Since $i \in I_j$ was chosen arbitrarily, this completes the proof.
\end{proof}

\subsection{Sufficient condition}

The following proposition is the main result of this section,
and settles the sufficient condition of \Cref{thm:main}.

\begin{proposition} \label{prop:sufficient}
  Suppose $A^*$ and $B^*$ are coarsely equivalent.
  Then $\TLA = \TLB$ for all $\alpha \in \mathbb{R}$, $p \in (0,\infty)$ and $q \in (0,\infty]$.
\end{proposition}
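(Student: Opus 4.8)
The plan is to establish $\TLA \hookrightarrow \TLB$ under the assumption that $A^\ast$ and $B^\ast$ are coarsely equivalent; the reverse embedding then follows by symmetry (interchanging the roles of $A$ and $B$), and the two embeddings together give $\TLA = \TLB$ with equivalent quasi-norms. By the well-definedness of the spaces (independence of the analyzing pair, \Cref{sec:TL}), it suffices to work with the fixed analyzing pairs $(\varphi,\Phi)$ for $A$ and $(\psi,\Psi)$ for $B$ from \Cref{sec:notation1}, which additionally satisfy the reproducing identity \ref{enu:AnalyzingPairPartitionOfUnity}. Fix $f \in \TLA$ and $\alpha \in \R$, $p \in (0,\infty)$, $q \in (0,\infty]$. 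The goal is to bound $\|f\|_{\TLB} = \big\| \big( \sum_{j \in \N_0} (|\det B|^{\alpha j} |f \ast \psi_j^B|)^q \big)^{1/q} \big\|_{L^p}$ by a constant times $\|f\|_{\TLA}$.

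First I would record the discretized reproducing formula coming from \ref{enu:AnalyzingPairPartitionOfUnity} for the pair $(\varphi,\Phi)$: picking an auxiliary $A$-analyzing pair $(\widetilde\varphi, \widetilde\Phi)$ with $f = \sum_{i \in \N_0} f \ast \varphi_i^A \ast (\widetilde\varphi_i^A)$ (with the slot $i=0$ using $\Phi, \widetilde\Phi$), as in the proof of \Cref{prop:TL_hardy}. Convolving with $\psi_j^B$ and using the Fourier support conditions, the frequency supports force $f \ast \psi_j^B = \sum_{i \in I_j} f \ast \varphi_i^A \ast (\widetilde\varphi_i^A) \ast \psi_j^B$, since $\widehat{\varphi_i^A}\,\widehat{\psi_j^B} \equiv 0$ unless $Q_i^{A^\ast} \cap P_j^{B^\ast} \neq \emptyset$, i.e.\ unless $i \in I_j$. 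By \Cref{lem:cover_coarse}, coarse equivalence of $A^\ast$ and $B^\ast$ gives $\sup_j |I_j| < \infty$, so this is a sum of uniformly boundedly many terms. Next, for each $i \in I_j$ I would estimate $|(f \ast \varphi_i^A \ast \widetilde\varphi_i^A \ast \psi_j^B)(x)|$ pointwise by the Peetre-type maximal function $M^{\psi,\text{here one needs the $\varphi$-side}}$ — more precisely, one writes $f \ast \varphi_i^A \ast \widetilde\varphi_i^A \ast \psi_j^B = (f \ast \varphi_i^A) \ast (\widetilde\varphi_i^A \ast \psi_j^B)$ and bounds this convolution by $\sup_{z} \frac{|(f\ast\varphi_i^A)(x+z)|}{(1+\rho_A(A^i z))^\eta}$ times $\int (1+\rho_A(A^i z))^\eta |(\widetilde\varphi_i^A \ast \psi_j^B)(-z)|\, dz$, and the latter integral is uniformly bounded in $i \in I_j$, $j \in \N_0$ because $\widetilde\varphi_i^A \ast \psi_j^B$ has rapidly decaying, uniformly controlled $A^i$-dilated tails (here \Cref{cor:cover_coarse}, giving $|\det A|^i \asymp |\det B|^j$ for $i \in I_j$, is what makes the normalizations match). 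This reduces the whole estimate to controlling the standard Peetre maximal function $\varphi^{\ast\ast}_{i,\eta}(f\ast\varphi_i^A)(x) := \sup_z \frac{|(f\ast\varphi_i^A)(x+z)|}{(1+\rho_A(A^i z))^\eta}$ associated to the single matrix $A$.

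Then I would invoke the anisotropic Peetre maximal inequality for the matrix $A$ (\cite[Lemma 3.4]{bownik2006atomic}, or the argument in \Cref{lem:peetretype} specialized to $A = B$): choosing $\eta > 0$ large enough that $\eta > \frac{1}{\min(p,q)}$, one has $\varphi^{\ast\ast}_{i,\eta}(f\ast\varphi_i^A)(x) \lesssim \big( M_{\rho_A}[|f\ast\varphi_i^A|^{1/\eta}](x)\big)^\eta$ uniformly in $i$. Substituting and using $|\det B|^{\alpha j} \asymp |\det A|^{\alpha i}$ for $i \in I_j$ (again \Cref{cor:cover_coarse}), together with $\sup_j|I_j| < \infty$ and the quasi-triangle inequality for $\ell^q$, gives
\[
  \Big( \sum_{j} (|\det B|^{\alpha j}|f\ast\psi_j^B|)^q \Big)^{1/q}
  \lesssim \Big( \sum_{i} \big( |\det A|^{\alpha i} \big(M_{\rho_A}[|f\ast\varphi_i^A|^{1/\eta}]\big)^\eta \big)^q \Big)^{1/q}
\]
pointwise, where on the right each index $i$ appears with bounded multiplicity because $\sup_i |J_i| < \infty$ by \Cref{lem:cover_coarse}. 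Finally, taking $L^p$ norms and applying the Fefferman–Stein vector-valued maximal inequality for $M_{\rho_A}$ in $L^{p/\eta}(\ell^{q/\eta})$ (valid since $p/\eta > 1$ and $q/\eta > 1$; this is where the choice of $\eta$ is used, and the anisotropic Fefferman–Stein inequality is available because $(\R^d, \rho_A, dx)$ is a space of homogeneous type) yields $\|f\|_{\TLB} \lesssim \|f\|_{\TLA}$.

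\textbf{Main obstacle.} The routine parts are the Fourier-support bookkeeping and the Fefferman–Stein application; the genuinely delicate point — and the one where coarse (as opposed to genuine) equivalence of $A^\ast,B^\ast$ is exploited — is the uniform-in-$(i,j)$ bound on the ``cross'' convolution kernel $\widetilde\varphi_i^A \ast \psi_j^B$ and on the weighted integral $\int (1+\rho_A(A^i z))^\eta |(\widetilde\varphi_i^A\ast\psi_j^B)(-z)|\,dz$. One must show these kernels, after the $A^i$-rescaling implicit in the $\rho_A(A^i z)$ weight, have Schwartz-type decay with constants independent of $i,j$; this requires combining the containment $P_j^{B^\ast} \subseteq \bigcup_{\ell=-M}^M Q_{i+\ell}^{A^\ast}$ for $i \in I_j$ (from \Cref{eq:EquivalenceImpliesNiceCovering}) with the volume comparison $|\det A|^i \asymp |\det B|^j$ (from \Cref{cor:cover_coarse}) and uniform control of $\|B^{-j}A^i\|$-type quantities supplied by \Cref{lem:coarse_norm} / \Cref{lem:cardinality_uniform}. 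This is precisely the spot that, per the introduction, is ``more subtle'' than in the homogeneous case because coarse equivalence of $A,B$ is not equivalent to that of $A^\ast,B^\ast$ — so one cannot symmetrize freely and must carry the adjoint covers through the entire argument.
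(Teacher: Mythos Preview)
Your plan is correct and will go through, but it differs from the paper's proof in where the ``mixing'' of $A$ and $B$ is absorbed. The paper proves the symmetric inequality $\|f\|_{\TLA}\lesssim\|f\|_{\TLB}$: it writes $\varphi_i^A=\big(\sum_{j\in J_i}\psi_j^B\big)\ast\varphi_i^A$ directly from condition (c3) (so your auxiliary dual pair $\widetilde\varphi$ is unnecessary, though harmless), and then pulls out the \emph{mixed} Peetre maximal function $M_{j,\eta}^\psi f(x)=\max_{i\in I_j}\sup_z\frac{|(f\ast\psi_j^B)(x+z)|}{(1+\rho_A(A^iz))^\eta}$ of \Cref{lem:peetretype}. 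The point is that the leftover weighted integral is then $\int(1+\rho_A(A^iy))^\eta|\varphi_i^A(-y)|\,dy$, which after the substitution $x=A^iy$ collapses to a single fixed Schwartz integral, trivially uniform in $i$. All the cross-dilation difficulty is thus packaged into the short proof of \Cref{lem:peetretype}, which only uses the Fourier-support containment $P_j^{B^\ast}\subseteq(A^\ast)^iK^\ast$ coming from \Cref{lem:cover_coarse}. In your route you keep the \emph{standard} single-matrix Peetre maximal function of $f\ast\varphi_i^A$ and instead push the mixing into the kernel: your ``main obstacle'' is real and requires the uniform bound $\|A^iB^{-j}\|=\|(B^\ast)^{-j}(A^\ast)^i\|\lesssim 1$ for $i\in I_j$ (note it is this product, not $\|B^{-j}A^i\|$, that controls the decay of $\psi_j^B$ in the $A^i$-rescaled variable); this bound follows from the reverse cover inclusion $Q_i^{A^\ast}\subseteq(B^\ast)^j\widetilde K^\ast$, again a consequence of \Cref{lem:cover_coarse}. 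Both arguments rest on the same structural input; the paper's arrangement is slightly slicker because the kernel integral becomes trivial, whereas yours avoids formulating a separate mixed-Peetre lemma at the cost of a hands-on kernel estimate.
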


\begin{proof}
We will use the notation introduced in \Cref{sec:notation1}.
Let $\alpha \in \R$, $p \in (0,\infty)$, and $q \in (0,\infty]$.
We only show that
\(
  \| \cdot \|_{\mathbf{F}^{\alpha}_{p,q}(A; \varphi, \Phi)}
  \lesssim \| \cdot \|_{\mathbf{F}^{\alpha}_{p,q}(B; \psi, \Psi)}
  ;
\)
the reverse inequality follows by symmetry.
Throughout, fix some $\eta > \max \{1/p, 1/q\}$
and let $f \in \mathcal{S}'(\R^d)$.
Since $A^*$ and $B^*$ are coarsely equivalent, it follows that
$\sup_{i \in \N_0} |J_i| < \infty$ and $ \sup_{j \in \N_0} |I_j| < \infty$
by \Cref{lem:cover_coarse}.
\\~\\
\textbf{Step 1.} \emph{(Pointwise estimate.)}
Let $i \in \N_0$.
Define $\psi_B^{(i)} := \sum_{j \in J_i} \psi^B_j$.
Then $\psi^{(i)}_B \in \mathcal{S} (\R^d)$, and $\psi_B^{(i)} \ast \varphi_i^A = \varphi^A_i$
by condition \ref{enu:AnalyzingPairPartitionOfUnity} for $\psi,\Psi$.
Therefore, for $x \in \R^d$,
\begin{align*}
 |(f \ast \varphi_i^A )(x)|
 &= |(f \ast (\psi_B^{(i)} \ast \varphi_i^A)) (x)| \\
 &\leq \sum_{j \in J_i}
         | (f \ast (\psi_j^B \ast \varphi_i^A)) (x)| \\
 &\leq \sum_{j \in J_i}
         \int_{\R^d}
           \frac{|(f \ast \psi_j^B)(x+y)|}{(1+\rho_A (A^i y))^{\eta}}
           \big(1+\rho_A(A^iy) \big)^{\eta} |\varphi^A_i (-y)|
         \; dy \\
 &\leq \sum_{j \in J_i}
         M_{j, \eta}^{\psi} f(x)
         \int_{\R^d} \big(1+\rho_A(A^iy) \big)^{\eta} |\varphi^A_i (-y)| \; dy ,
\end{align*}
where $M_{j, \eta}^{\psi} f(x)$ is defined as in \Cref{lem:peetretype}.
For estimating the integral on the right-hand side above, choose $N > 1 + \eta$.
Then, since $\varphi, \Phi \in \mathcal{S} (\R^d)$, and in view of
\mbox{\cite[Chapter 1, Lemma 3.2]{bownik2003anisotropic}},
there exists $C > 0$ such that
$\max \{ |\Phi(\cdot)|, |\varphi (\cdot)|\} \leq C  (1+\rho_A(\cdot))^{-N}$.
In addition, since $\eta - N < -1$, an application of \cite[Lemma 2.3]{koppensteiner2023anisotropic1}
yields that $\int_{\R^d} (1+\rho_A(x))^{\eta - N} \; dx < \infty$.
Therefore, if $i = 0$, the symmetry of $\rho_A$ gives
\[
 \int_{\R^d} \big(1+\rho_A(A^iy) \big)^{\eta} |\varphi^A_i (-y)| \; dy
 \leq C \int_{\R^d} \big(1+\rho_A(y) \big)^{\eta - N}  \; dy
 < \infty
 .
\]
 Similarly, if $i \in \N$, then the change-of-variable $x = A^i y$ gives
\begin{align*}
  \int_{\R^d} (1+\rho_A (A^i y))^{\eta} |\varphi_i^A(-y)| \; dy
  & = \int_{\R^d} (1+\rho_A(x))^{\eta} |\varphi(-x)| \; dx \\
  & \leq C \int_{\R^d} (1 + \rho_A (x))^{\eta -N} \; dx
    < \infty
  ,
\end{align*}
where the right-hand side is independent of $i$.
Therefore,
\begin{align} \label{eq:pointwise}
  |(f \ast \varphi_i^A )(x)|
  \lesssim \sum_{j \in J_i} M_{j, \eta}^{\psi} f(x), \quad x \in \R^d.
\end{align}
Since $A^\ast, B^\ast$ are coarsely equivalent,
\Cref{cor:cover_coarse} shows that
\[
  |\det A|^i
  = |\det A^\ast|^i
  \asymp |\det B^\ast|^j
  = |\det B|^j
\]
whenever $i \in I_j$ (equivalently, $j \in J_i$).
Hence, combining this with \eqref{eq:pointwise} gives
\begin{align} \label{eq:pointwise2}
|\det A|^{\alpha i} |(f \ast \varphi_i^A )(x)|
 \lesssim \sum_{j \in J_i} |\det B|^{\alpha j} M_{j, \eta}^{\psi} f(x)
\end{align}
for $x \in \R^d$, with implied constant independent of $i \in \N_0$.
\\~\\
\textbf{Step 2.} \emph{(Norm estimate for $q < \infty$.)}
This step establishes the desired (quasi)-norm estimate
for the case $q < \infty$.
Since $\sup_{i \in \N_0} |J_i| < \infty$ and $\sup_{j \in \N_0} |I_j| < \infty$,
it follows from \Cref{eq:pointwise2} that, for every $x \in \R^d$,
\begin{align*}
 \sum_{i \in \N_0} \big( |\det A|^{\alpha i} |(f \ast \varphi_i^A )(x)| \big)^q
 &\lesssim \sum_{i \in \N_0} \bigg(  \sum_{j \in J_i} |\det B|^{\alpha j} M_{j, \eta}^{\psi} f(x) \bigg)^q \\
 &\lesssim \sum_{i \in \N_0} \sum_{j \in J_i} \big(  |\det B|^{\alpha j} M_{j, \eta}^{\psi} f(x) \big)^q \\
 &= \sum_{j \in \N_0} \sum_{i \in I_j} \big(  |\det B|^{\alpha j} M_{j, \eta}^{\psi} f(x) \big)^q \\
 &\lesssim \sum_{j \in \N_0}  \big(  |\det B|^{\alpha j} M_{j, \eta}^{\psi} f(x) \big)^q \\
 &\lesssim \sum_{j \in \N_0}
           \bigg(
             M_{\rho_A} \big[ |\det B|^{\frac{\alpha j}{\eta}} |f \ast \psi_j^B|^{\frac{1}{\eta}} \big] (x)
           \bigg)^{\eta q},
\end{align*}
where the last inequality used \Cref{lem:peetretype}.
Since $\eta q, \eta p > 1$, the vector-valued Fefferman-Stein inequality
(see, e.g., \cite[Theorem 2.5]{bownik2006atomic}) is applicable, and yields
\begin{align*}
 \| f \|_{\TL (A; \varphi, \Phi)}
 & = \bigg\|
       \bigg(
         \sum_{i \in \N_0}
           \big( |\det A|^{\alpha i} | f \ast \varphi_i^A | \big)^q
       \bigg)^{1/q}
     \bigg\|_{L^p} \\
 & \lesssim \bigg\|
              \bigg(
                \sum_{j \in \N_0}
                  \bigg(
                    M_{\rho_A}
                    \big[
                      |\det B|^{\frac{\alpha j}{\eta}}
                      |f \ast \psi_j^B|^{\frac{1}{\eta}}
                    \big]
                  \bigg)^{\eta q}
              \bigg)^{\frac{1}{\eta q}}
            \bigg\|_{L^{\eta p}}^{\eta} \\
 & \lesssim \bigg\|
              \bigg(
                \sum_{j \in \N_0}
                  \bigg(
                    |\det B|^{\frac{\alpha j}{\eta}}
                    |f \ast \psi_j^B |^{\frac{1}{\eta}}
                  \bigg)^{\eta q}
              \bigg)^{\frac{1}{\eta q}}
            \bigg\|_{L^{\eta p}}^{\eta} \\
 & = \| f \|_{\TL (B; \psi, \Psi)},
\end{align*}
which completes the proof for the case $q < \infty$.
\\~\\
\textbf{Step 3.} \emph{(Norm estimate for $q = \infty$.)}
As in Step 2, combining \Cref{eq:pointwise2}
with $\sup_{i \in \N_0} |J_i|, \; \sup_{j \in \N_0} |I_j| < \infty$ and \Cref{lem:peetretype}, yields
\begin{align*}
 \sup_{i \in \N_0} |\det A|^{\alpha i} |(f \ast \varphi_i^A )(x)|
 &\lesssim \sup_{i \in \N_0}  \sum_{j \in J_i} |\det B|^{\alpha j} M_{j, \eta}^{\psi} f(x)  \\
 &\lesssim \sup_{j \in \N_0}    |\det B|^{\alpha j} M_{j, \eta}^{\psi} f(x) \\
 &\lesssim \sup_{j \in \N_0}
           \bigg(
             M_{\rho_A}
             \big[ |\det B|^{\frac{\alpha j}{\eta}} |f \ast \psi_j^B|^{\frac{1}{\eta}} \big]
             (x)
           \bigg)^{\eta }
\end{align*}
for $x \in \R^d$.
Since $\eta p, q > 1$, an application of the vector-valued Fefferman-Stein inequality gives
 \begin{align*}
 \| f \|_{\TL (A; \varphi; \Phi)}
 & \lesssim \bigg\|
              \bigg(
                \sup_{j \in \N_0}
                    M_{\rho_A}
                    \big[
                      |\det B|^{\frac{\alpha j}{\eta}}
                      |f \ast \psi_j^B|^{\frac{1}{\eta}}
                    \big]
                  \bigg)^{\eta}
            \bigg\|_{L^{p}} \\
 & =        \bigg\|
                \sup_{j \in \N_0}
                    M_{\rho_A}
                    \big[
                      |\det B|^{\frac{\alpha j}{\eta}}
                      |f \ast \psi_j^B|^{\frac{1}{\eta}}
                    \big]
            \bigg\|_{L^{\eta p}}^\eta \\
 & \lesssim \bigg\|
                \sup_{j \in \N_0}
                  \bigg(
                    |\det B|^{\frac{\alpha j}{\eta}}
                    |f \ast \psi_j^B|^{\frac{1}{\eta}}
                  \bigg)
            \bigg\|_{L^{\eta p}}^\eta \\
  & =       \bigg\|
                \sup_{j \in \N_0}
                \bigg(
                  |\det B|^{\frac{\alpha j}{\eta}}
                  |f \ast \psi_j^B |^{\frac{1}{\eta}}
                \bigg)^{\eta}
            \bigg\|_{L^{p}} \\
  &= \| f \|_{\TL (B; \psi, \Psi)}.
\end{align*}
This completes the proof.
\end{proof}

\section{Necessary conditions for classification}
\label{sec:necessary}

In this section, we prove the necessary conditions of \Cref{thm:main}
for the equality of inhomogeneous Triebel-Lizorkin spaces.
Explicitly, we prove the following theorem.

\begin{theorem} \label{thm:necessary}
Let $A, B \in \GL(d, \R)$ be expansive matrices.
Suppose that $\TL(A) = \TL(B)$ for some $\alpha \in \R$, $p \in (0, \infty)$ and $q \in (0, \infty]$.
Then at least one of the following two cases hold:
\begin{enumerate}
 \item[(i)] $A^*$ and $B^*$ are coarsely equivalent;
 \item[(ii)] $\alpha = 0$, $p \in (1, \infty)$ and $q = 2$.
\end{enumerate}
\end{theorem}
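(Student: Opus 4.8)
The plan is to establish the contrapositive in a sharp form: assuming $A^*$ and $B^*$ are \emph{not} coarsely equivalent, we construct a tempered distribution $f$ lying in $\TL(B)$ but not in $\TL(A)$, unless we are in the exceptional regime $\alpha = 0$, $p \in (1,\infty)$, $q = 2$ where both spaces coincide with $L^p$ regardless. Since the roles of $A$ and $B$ are symmetric in the conclusion but not in the construction, I would first invoke \Cref{lem:cover_coarse} and \Cref{cor:cover_coarse}: if $A^*$ and $B^*$ are not coarsely equivalent, then the inhomogeneous covers $(Q^{A^*}_i)$ and $(P^{B^*}_j)$ are not equivalent, so (after possibly swapping $A$ and $B$) there is a sequence of indices along which one cover fails to be almost subordinate to the other; equivalently, by \Cref{lem:cover_coarse}(iii), either $\sup_i |J_i| = \infty$ or $\sup_j |I_j| = \infty$. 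This furnishes, for arbitrarily large $n$, an index $i_n$ (or $j_n$) such that a single anisotropic frequency annulus for one matrix meets $\gtrsim n$ annuli of the other.

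The core of the argument is then a quantitative norm comparison for carefully chosen test functions. Following the structure of \cite{koppensteiner2023classification} and \cite{bownik2003anisotropic}, I would take $f_n$ to be (a normalized version of) $\varphi^B_{j_n}$, or more precisely a randomized sum $f_n^\omega = \sum_{j} \epsilon_j(\omega)\, c_j\, \varphi^B_{j}$ over the relevant band of indices, with $(\epsilon_j)$ Rademacher variables. On the $B$-side, because the frequency supports $P^{B^*}_j$ have bounded overlap, $\|f_n^\omega\|_{\TL(B)}$ is comparable — uniformly in $\omega$ — to a single elementary quantity (essentially $\ell^q$ of the scalar coefficients in $L^p$), which one keeps bounded. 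On the $A$-side, one expands $f_n^\omega \ast \varphi^A_i$ and uses the failure of coarse equivalence to show that many summands $i$ in $\N_0$ receive nontrivial contributions from the chosen frequency band; taking expectation over $\omega$ and applying Khintchine's inequality converts the oscillatory sum into a square function, reducing the estimate effectively to the case $p = 2$ (the step explicitly flagged in the introduction). A lower bound on the resulting expression that grows with $n$ — using $L^2$ orthogonality / Plancherel and the lower mass bounds on the annuli — then contradicts $\TL(A) = \TL(B)$, since the latter forces $\|f_n^\omega\|_{\TL(A)} \lesssim \|f_n^\omega\|_{\TL(B)}$ by the closed graph theorem (the spaces are complete, \Cref{lem:Completeness}, and continuously embedded in $\Schwartz'$).

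The exceptional case must be handled separately: when $\alpha = 0$, $q = 2$, and $p \in (1,\infty)$, both $\TL(A)$ and $\TL(B)$ equal $L^p$ with equivalent norms (by the discussion after \eqref{eq:TL_def_intro}, or \Cref{prop:TL_hardy} for $p$ near $1$ together with the classical $\mathbf{F}^0_{p,2} = L^p$), so no contradiction arises and this genuine coincidence is exactly the alternative (ii). Thus the randomized construction must be shown to produce a growing lower bound whenever $(\alpha,p,q)$ lies \emph{outside} that exceptional set; I would organize this by cases on which of $\alpha \neq 0$, $q \neq 2$, or $p \leq 1$ holds, in each case choosing the coefficients $c_j$ to exploit the relevant feature — e.g., for $\alpha \neq 0$ the weights $|\det A|^{\alpha i}$ versus $|\det B|^{\alpha j}$ diverge along a band where $i \asymp n$ but the $\det$'s are forced apart by the non-equivalence; for $q \neq 2$ the mismatch between the $\ell^q$ aggregation on the two sides over $\sim n$ overlapping indices is the source of growth; and for $p \leq 1$ one argues as in \cite{bownik2003anisotropic} via atoms and the local Hardy space characterization (\Cref{prop:TL_hardy}, \Cref{sec:atomic}), taking care with the borderline $p = 1$ subtlety mentioned in \Cref{rem:example}.

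The main obstacle I anticipate is making the $A$-side lower bound genuinely quantitative and uniform: one must show not merely that $f_n$ has nonzero $\TL(A)$-norm but that this norm \emph{grows without bound} as the overlap parameter $n \to \infty$, while the $\TL(B)$-norm stays bounded. This requires (a) locating, via the precise combinatorics of $J_i$ and $I_j$ under failure of coarse equivalence (using \Cref{lem:NeighborSetControl} to control neighbor sets and \Cref{lem:cardinality_uniform} in the \emph{coarsely equivalent} direction to see exactly what must fail), a band of $\gtrsim n$ $A$-indices each of which genuinely detects the test function; (b) controlling the cross terms in $|f_n^\omega \ast \varphi^A_i|^2$ after applying Khintchine so that the main term dominates — here the bounded overlap of the $B$-annuli and the Schwartz decay of $\varphi, \psi$ (via the Peetre-type estimates of \Cref{sec:peetretype}, used in the "wrong" direction) keep error terms summable; and (c) ensuring the whole argument survives the passage from $p = 2$ back to general $p$, which is where the inhomogeneous setting is more delicate than \cite{koppensteiner2023classification} because the low-frequency piece $\varphi^A_0 = \Phi$ and the one-sided nature of coarse (as opposed to genuine) equivalence break the symmetry between $A$ and $B^*$; I expect this to be where the new ideas hinted at in the introduction are concentrated.
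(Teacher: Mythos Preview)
Your overall strategy matches the paper's: establish the norm equivalence via the closed graph theorem, then split into the cases $\alpha \neq 0$, $(\alpha,q)=(0,\neq 2)$, and $(\alpha,q,p)=(0,2,\leq 1)$, using Khintchine for the middle case and the local Hardy/atom machinery for the last. The paper proceeds exactly this way (\Cref{thm:casenonzero}, \Cref{thm:casenzero1}, \Cref{thm:casenzero2}).

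Where you diverge is in the choice of test functions, and this is worth flagging. You propose randomized sums $\sum_j \epsilon_j c_j \varphi^B_j$ built from the analyzing functions themselves. The paper instead uses modulated bumps $M_{\xi_k}\chi_\delta$ whose Fourier transforms are supported in small balls $\CalB(\xi_k,\delta) \subseteq Q^{A^*}_{i_k} \cap P^{B^*}_{j_0}$, all sitting inside a \emph{single} $B$-annulus $P^{B^*}_{j_0}$ but in \emph{distinct, well-separated} $A$-annuli. This buys two things: on the $B$-side the norm reduces to $\|f\|_{L^p}$ by a one-line argument (\Cref{prop:normestimate1}), and on the $A$-side one gets an exact $\ell^q$ expression (\Cref{prop:normestimate2}) because each $M_{\xi_k}\chi_\delta$ is seen by exactly one block of $A$-indices. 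Your $\varphi^B_j$ have Fourier support filling an entire $B$-annulus, which may spill across several $A$-annuli in an uncontrolled way, so the $A$-side lower bound and the cross-term control you anticipate in (b) would be genuinely harder to execute. A second difference: for $\alpha \neq 0$ the paper needs no randomization at all --- a single $M_{\xi_0}\chi_\delta$ with support in $Q^{A^*}_i \cap P^{B^*}_j$ already forces $|\det A|^{\alpha i} \asymp |\det B|^{\alpha j}$, and \Cref{lem:cardinality_uniform} finishes. Finally, for $p\leq 1$ the paper's argument (\Cref{thm:casenzero2}) is not a variant of the Khintchine scheme but a separate construction of special atoms via \Cref{lem:coarse_norm} and a compactness/limit argument for $p=1$; your sketch gestures at this but the details there are the most delicate part of the whole proof.
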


\begin{remark}
In addition to \Cref{thm:necessary}, one can also show that if $\TLone(A) = \TLtwo(B)$
for some $\alpha, \beta \in \R$, $p_1, p_2 \in (0,\infty)$ and $q_1, q_2 \in (0, \infty]$,
then $\alpha = \beta$, $p_1 = p_2$ and $q_1 = q_2$.
This follows without much modification from the corresponding arguments
for the homogeneous Triebel-Lizorkin spaces in \cite[Section 5]{koppensteiner2023classification},
together with their adaptions to inhomogeneous function spaces that are
proven in this section.
As no new ideas are required, we do not provide the details.
\end{remark}

\subsection{General notation}
\label{sec:notation2}

Throughout all of this section, the same notation as in \Cref{sec:notation1} will be used.
In addition, define the index sets
\[
  N_i (A^*) := \{ k \in \N_0 : Q^{A^*}_i \cap Q^{A^*}_k \neq \emptyset \}
  \quad \text{and} \quad
  N_j(B^*) := \{ k \in \N_0 : P^{B^*}_k \cap P^{B^*}_j \neq \emptyset \}
\]
for fixed $i,j \in \N_0$.
Then a combination of  \Cref{cor:cover_coarse} and \Cref{lem:cardinality_uniform} (applied to $A=B$)
implies the existence of a constant $N \in \N$  such that
\[
  N_i(A^*) \cup N_i (B^*) \subseteq \{ k \in \N_0 : |k - i | \leq N \}
  \qquad \text{for all } i \in \N_0
  .
\]
For $i,j \in \N_0$, define the functions $\varphi_A^{(i)}, \psi_B^{(j)} \in \mathcal{S}(\R^d)$ by
\[
  \varphi_A^{(i)} :=  \sum_{k \in N_i(A^*)} \varphi^A_k
  \quad \text{and} \quad
  \psi^{(j)}_B := \sum_{k \in N_j(B^*) } \psi^B_k.
\]
Then, by condition \ref{enu:AnalyzingPairPartitionOfUnity},
it follows that $\widehat{\varphi_A^{(i)}} \equiv 1$ on $Q^{A^*}_i$,
and $\widehat{\psi_B^{(j)}} \equiv 1$ on $P^{B^*}_j$.

Lastly, we fix some $\chi \in \mathcal{S} (\R^d) \setminus \{0\}$
with the property that $\widehat{\chi} \geq 0$ and $\suppc \widehat{\chi} \subseteq \mathcal{B}(0, 1)$.
For $\delta > 0$, the associated (scalar) dilation of $\chi$ is defined by
$\chi_{\delta} := \delta^d  \chi(\delta \, \cdot)$.

\subsection{Auxiliary results}

This section contains two lemmata that are repeatedly in the remainder.

\begin{lemma}\label{lem:NormWithStarredPartitionOfUnity}
  Let $\alpha \in \R$, $p \in (0,\infty)$, and $q \in (0,\infty]$.
  With $\varphi_A^{(i)}$, $i \in \N_0$ as in \Cref{sec:notation2}, there exists a constant
  $C = C(\alpha, p, q, A, \varphi, \Phi) > 0$ satisfying
  \[
    \bigg\|
      \Big\|
        \Big(
          |\det A|^{\alpha i}
           |f \ast \varphi_A^{(i)} |
        \Big)_{i \in \N_0}
      \Big\|_{\ell^q}
    \bigg\|_{L^p}
    \leq C \| f \|_{\TL (A)}
  \]
  for all $f \in \Schwartz '(\R^d)$.
\end{lemma}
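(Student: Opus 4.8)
The plan is to reduce the quantity on the left-hand side to the Triebel-Lizorkin norm of $f$ via the pointwise triangle inequality for $\ell^q$ (for $q \geq 1$) or its $q$-subadditive analogue (for $q < 1$), combined with the uniform control on the neighbor index sets $N_i(A^\ast)$ established in \Cref{sec:notation2}. First I would recall from \Cref{sec:notation2} that there is a constant $N \in \N$, independent of $i$, such that $N_i(A^\ast) \subseteq \{ k \in \N_0 : |k - i| \leq N \}$; in particular $|N_i(A^\ast)| \leq 2N + 1$ for all $i$. Writing $\varphi_A^{(i)} = \sum_{k \in N_i(A^\ast)} \varphi_k^A$, we get pointwise
\[
  |f \ast \varphi_A^{(i)}|
  \leq \sum_{k \in N_i(A^\ast)} |f \ast \varphi_k^A|
  \leq \Big( (2N+1)^{\max\{0, 1 - 1/\min\{1,q\}\}} \Big) \cdot \Big( \textstyle\sum_{k \in N_i(A^\ast)} |f \ast \varphi_k^A|^{\min\{1,q\}} \Big)^{1/\min\{1,q\}},
\]
but it is cleaner to avoid committing to a normalization of the finite sum and simply bound $|f \ast \varphi_A^{(i)}| \leq \sum_{|k-i| \leq N} |f \ast \varphi_k^A|$, then absorb weights.

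Next I would multiply by $|\det A|^{\alpha i}$ and note that, since $|k - i| \leq N$, the weights are comparable: $|\det A|^{\alpha i} \asymp |\det A|^{\alpha k}$ with an implied constant depending only on $\alpha$, $A$, and $N$. Hence
\[
  |\det A|^{\alpha i} |f \ast \varphi_A^{(i)}|
  \lesssim \sum_{k \,:\, |k - i| \leq N} |\det A|^{\alpha k} |f \ast \varphi_k^A|
  = \sum_{\ell = -N}^{N} \big( |\det A|^{\alpha (i+\ell)} |f \ast \varphi_{i+\ell}^A| \big),
\]
where terms with $i + \ell < 0$ are interpreted as zero. Taking $\ell^q$-norms in $i$ and using the (quasi-)triangle inequality for $\ell^q$ together with the fact that the shift operator $(a_i)_i \mapsto (a_{i+\ell})_i$ is a contraction on $\ell^q$, the inner $\ell^q$-norm of $\big( |\det A|^{\alpha i} |f \ast \varphi_A^{(i)}| \big)_i$ is bounded by a constant (depending on $N$ and $q$) times the $\ell^q$-norm of $\big( |\det A|^{\alpha i} |f \ast \varphi_i^A| \big)_i$, pointwise in $x$. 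Taking $L^p$-norms then gives exactly $\| f \|_{\TL(A)}$ up to a constant, which is the claim.

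The only mild subtlety — and the single point requiring care rather than a genuine obstacle — is bookkeeping of constants when $q < 1$, where $\ell^q$ is only a quasi-norm and one must use $\| a + b \|_{\ell^q}^q \leq \|a\|_{\ell^q}^q + \|b\|_{\ell^q}^q$ (and similarly for $L^p$ if $p < 1$) rather than genuine subadditivity; this introduces powers of $2N+1$ but nothing worse, since the number of shifts is the fixed finite number $2N+1$. Everything else is the routine interchange of finite sums with norms and the comparability of the dyadic-type weights $|\det A|^{\alpha i}$ over a bounded range of indices. No deep input is needed beyond the neighbor-set bound from \Cref{sec:notation2}, so I would present the argument compactly in four or five lines.
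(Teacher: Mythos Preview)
Your proposal is correct and follows essentially the same approach as the paper: both arguments exploit the inclusion $N_i(A^\ast) \subseteq \{k : |k-i| \leq N\}$ from \Cref{sec:notation2}, apply the triangle inequality to the finite sum defining $\varphi_A^{(i)}$, and use the comparability $|\det A|^{\alpha i} \asymp |\det A|^{\alpha k}$ for $|k-i| \leq N$. The only cosmetic difference is that the paper reorganizes the double sum via a relabeling/counting argument (bounding $\#\{(i,t) : \ell_t^{(i)} = \ell\} \leq (2N+1)^2$), whereas you reorganize it as a sum of $2N+1$ index shifts and use that shifts are contractions on $\ell^q$; both are standard bookkeeping devices for the same estimate.
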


\begin{proof}
  We only provide the proof for $q < \infty$; the proof for $q = \infty$ is similar, but easier.
  With $N$ as in \Cref{sec:notation2}, it follows that for each $i \in \N_0$, we can write
  $N_i(A^\ast) = \{ \ell_1^{(i)}, \dots, \ell_{M_i}^{(i)} \}$
  with $M_i = |N_i (A^\ast)| \leq 2 N + 1$.
  Thus,
  \(
    \varphi_A^{(i)}
    = \sum_{t = 1}^{2N+1}
        \Indicator_{t \leq M_i} \,
        \varphi_{\ell_t^{(i)}}^A
    ,
  \)
  with $\Indicator_{t\leq M_i} = 1$ for $t\leq M_i$ and $\Indicator_{t\leq M_i} = 0$, otherwise.
  Hence, given $f \in \Schwartz '(\R^d)$,
  \[
    |f \ast \varphi_A^{(i)}|
    \leq \sum_{t = 1}^{2N+1}
         \big(
           \Indicator_{t \leq M_i}
           \cdot |f \ast \varphi_{\ell_t^{(i)}}^A |
         \big)
    .
  \]
  Furthermore, note because of $|\ell_t^{(i)} - i| \leq N$
  that $|\det A|^{\alpha i} \lesssim |\det A|^{\alpha \ell_t^{(i)}}$.
  Overall, this implies
  \begin{align*}
    \sum_{i \in \N_0}
      \big( 
        |\det A|^{\alpha i}
         |f \ast \varphi_A^{(i)}|
      \big)^q
    & \lesssim \sum_{i \in \N_0}
                 \sum_{t=1}^{2N+1}
                 \Big(
                   \Indicator_{t \leq M_i}
                   \cdot \big( 
                           |\det A|^{\alpha i}
                            |f \ast \varphi_{\ell_t^{(i)}}^A |
                         \big)^q
                 \Big) \\
    & \lesssim \sum_{i \in \N_0}
                 \sum_{t=1}^{2N+1}
                 \Big(
                   \Indicator_{t \leq M_i}
                   \cdot \big( 
                           |\det A|^{\alpha \ell_t^{(i)}}
                            |f \ast \varphi_{\ell_t^{(i)}}^A |
                         \big)^q
                 \Big)
    .
  \end{align*}
  Fix $\ell \in \N_0$ for the moment, and note that if $\ell = \ell_t^{(i)}$
  for some $i \in \N_0$ and $1 \leq t \leq M_i$, then $|\ell - i| = |\ell_t^{(i)} - i| \leq N$.
  Since also $M_i \leq 2 N + 1$, this implies that
  \[
    \# \{ (i,t) \,:\, i \in \N_0, 1 \leq t \leq M_i \text{ and } \ell_t^{(i)} = \ell \}
    \leq (2N+1)^2
    .
  \]
  Thus, in combination with the above, it follows that
  \begin{align*}
    \sum_{i \in \N_0}
      \big( 
        |\det A|^{\alpha i}
         |f \ast \varphi_A^{(i)}|
      \big)^q
    &\lesssim \sum_{i \in \N_0}
                 \sum_{t=1}^{2N+1}
                 \Big(
                   \Indicator_{t \leq M_i}
                   \cdot \big(
                           |\det A|^{\alpha \ell_t^{(i)}}
                            |f \ast \varphi_{\ell_t^{(i)}}^A |
                         \big)^q
                 \Big) \\
    &\lesssim \sum_{\ell \in \N_0}
               \big( 
                 |\det A|^{\alpha \ell}
                  |f \ast \varphi_{\ell}^A |
               \big)^q.
  \end{align*}
  By definition of $\| \cdot \|_{\TL (A)}$, this easily implies the claim.
\end{proof}

The following lemma is a consequence of the closed graph theorem.
We provide its proof for the sake of completeness.

\begin{lemma}\label{lem:NormEquivalence}
  Let $A,B \in \GL(d,\R)$ be expansive and let $\alpha \in \R$, $p \in (0,\infty)$,
  and $q \in (0,\infty]$.
  If $\TLA = \TLB$, then $\| \cdot \|_{\TLA} \asymp \| \cdot \|_{\TLB}$.
\end{lemma}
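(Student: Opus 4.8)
The plan is to apply the closed graph theorem to the identity map $\iota : \TLA \to \TLB$, exploiting that both spaces are complete quasi-normed spaces that embed continuously into $\Schwartz'(\R^d)$. First I would record the two facts that make this work: by \Cref{lem:Completeness} (invoked from the appendix), both $\TLA$ and $\TLB$ are complete with respect to their respective quasi-norms; and, as is standard for these spaces (and follows from the convergence of the Littlewood--Paley reproducing identity together with the Peetre-type estimates, cf.\ \cite{bownik2006atomic}), the inclusion maps $\TLA \hookrightarrow \Schwartz'(\R^d)$ and $\TLB \hookrightarrow \Schwartz'(\R^d)$ are continuous, where $\Schwartz'(\R^d)$ carries the weak-$\ast$ topology. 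Since $\TLA = \TLB$ as sets by hypothesis, the identity map $\iota$ is a well-defined linear bijection between these two complete quasi-normed spaces.

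Next I would verify the closed-graph hypothesis: suppose $f_n \to f$ in $\TLA$ and $\iota(f_n) = f_n \to g$ in $\TLB$. By continuity of $\TLA \hookrightarrow \Schwartz'$, we get $f_n \to f$ in $\Schwartz'$; by continuity of $\TLB \hookrightarrow \Schwartz'$, we get $f_n \to g$ in $\Schwartz'$. Since $\Schwartz'(\R^d)$ is Hausdorff, $f = g$, so the graph of $\iota$ is closed. A minor technical point is that the closed graph theorem in the form needed here applies to complete metrizable topological vector spaces, i.e.\ $F$-spaces; a quasi-normed space need not be locally convex, but it is metrizable via the quasi-norm (or, by the Aoki--Rolewicz theorem, via an equivalent $r$-norm for suitable $r \in (0,1]$), and completeness is given, so the relevant version of the closed graph theorem (valid for $F$-spaces) applies. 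Hence $\iota : \TLA \to \TLB$ is continuous, which for a linear map between quasi-normed spaces means precisely $\| f \|_{\TLB} \lesssim \| f \|_{\TLA}$ for all $f$.

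Finally, applying the same argument to $\iota^{-1} : \TLB \to \TLA$ — which is legitimate by symmetry, since the roles of $A$ and $B$ are interchangeable — yields $\| f \|_{\TLA} \lesssim \| f \|_{\TLB}$. Combining the two estimates gives $\| \cdot \|_{\TLA} \asymp \| \cdot \|_{\TLB}$, as claimed. The only real obstacle is making sure the closed graph theorem is invoked in a form valid for non-locally-convex complete metrizable spaces and that the embeddings into $\Schwartz'$ are genuinely continuous; both are standard, the former being the $F$-space version of the closed graph theorem and the latter following from the basic estimates underlying the well-definedness of $\TLA$ recalled in \Cref{sec:TL}.
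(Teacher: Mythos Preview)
Your proposal is correct and follows essentially the same approach as the paper: both apply the closed graph theorem for $F$-spaces to the identity map $\iota:\TLA\to\TLB$, use completeness from \Cref{lem:Completeness}, and conclude by symmetry. The only cosmetic difference is how the closed-graph hypothesis is verified: you invoke the full continuous embedding $\TLA\hookrightarrow\Schwartz'(\R^d)$ as a standard fact, whereas the paper uses part~(iii) of \Cref{lem:Completeness}, which only asserts $\langle f_n,\phi\rangle\to\langle f,\phi\rangle$ for $\phi\in\Fourier(C_c^\infty(\R^d))$, and then appeals to density of this subspace in $\Schwartz(\R^d)$ to conclude $f=g$.
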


\begin{proof}
Suppose that $\TLA = \TLB$ as sets.
Then the identity map
\[
  \iota : \quad
  \TLA \to \TLB, \quad
  f \mapsto f
\]
is well-defined and linear.
Moreover, its graph is closed because if $f_n \to f$ in $\TLA$ and $f_n \to g$ in $\TLB$,
then \Cref{lem:Completeness} shows for arbitrary $\phi \in \Fourier (C_c^\infty (\R^d))$ that
\[
  \langle f, \phi \rangle
  = \lim_{n \to \infty} \langle f_n , \phi \rangle
  = \langle g , \phi \rangle
  .
\]
Note that $\Fourier(C_c^\infty(\R^d)) \subseteq \Schwartz(\R^d)$ is dense by
\cite[Theorems 7.7 and 7.10]{RudinFA}.
Hence, since $f,g \in \Schwartz' (\R^d)$, we get $f = g$,
showing that $\iota$ has closed graph. Therefore, it follows that $\| f \|_{\TLB} \lesssim \| f \|_{\TLA}$ by an application of the closed graph theorem
(see, e.g., \cite[Theorem 2.15]{RudinFA}), which is applicable
since $\TLA,\TLB$ are complete with respect to the quasi-norms
$\| \cdot \|_{\TLA}$ and $\| \cdot \|_{\TLB}$, which are $r$-norms for $r := \min \{ 1, p, q \}$,
cf.\ \Cref{lem:Completeness}.
This implies that the topology on $\TLA$ is induced by the complete, translation-invariant
metric $d(f,g) := \| f - g \|_{\TLA}^r$, and similarly for $\TLB$;
thus, $\TLA, \TLB$ are both F-spaces in the terminology of \cite[Section 1.8]{RudinFA}.

The estimate $\| f \|_{\TLB} \lesssim \| f \|_{\TLA}$ follows by symmetry.
\end{proof}

\subsection{The case \texorpdfstring{$\alpha \neq 0$}{α ≠ 0}}

This section is devoted to proving the necessary condition of \Cref{thm:main} for the case $\alpha \neq 0$.
A crucial ingredient in the proof of this result is the following proposition,
which is an adaptation of \cite[Proposition 5.3]{koppensteiner2023classification}
to the case of inhomogeneous function spaces.

\begin{proposition} \label{prop:normestimate1}
Let $\alpha \in \R$, $p \in (0,\infty)$ and $q \in (0,\infty]$.
If $f \in \mathcal{S}(\R^d)$ satisfies $\supp \widehat{f} \subseteq Q^{A^*}_{i_0}$
for some $i_0 \in \mathbb{N}_0$, then
\[
 \| f \|_{\TLA} \asymp | \det A|^{\alpha i_0} \| f \|_{L^p},
\]
with implicit constants independent of $i_0$ and $f$.
\end{proposition}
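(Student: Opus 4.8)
The plan is to exploit the spectral support condition $\supp\widehat f\subseteq Q^{A^*}_{i_0}$ to collapse the Triebel-Lizorkin quasi-norm to a single term. First I would observe that, by definition of the index sets $N_i(A^*)$ and the functions $\varphi_A^{(i)}$ from \Cref{sec:notation2}, one has $\widehat{\varphi_A^{(i)}}\equiv 1$ on $Q^{A^*}_i$; since $\supp\widehat f\subseteq Q^{A^*}_{i_0}$, this forces $f\ast\varphi_i^A=0$ for every $i$ with $i\notin N_{i_0}(A^*)$, because then $\supp\widehat{\varphi_i^A}=Q_i^{A^*}$ is disjoint from $\supp\widehat f$. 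Consequently only the finitely many indices $i\in N_{i_0}(A^*)$ (at most $2N+1$ of them, all within distance $N$ of $i_0$) contribute to the $\ell^q$-sum in $\|f\|_{\TLA}$. Using $|\det A|^{\alpha i}\asymp|\det A|^{\alpha i_0}$ for such $i$ and the finiteness of the index set, the $\ell^q$-norm is comparable (uniformly in $i_0$) to $|\det A|^{\alpha i_0}\max_{i\in N_{i_0}(A^*)}|f\ast\varphi_i^A|$ pointwise, hence $\|f\|_{\TLA}\asymp|\det A|^{\alpha i_0}\big\|\max_{i\in N_{i_0}(A^*)}|f\ast\varphi_i^A|\big\|_{L^p}$.

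For the upper bound $\|f\|_{\TLA}\lesssim|\det A|^{\alpha i_0}\|f\|_{L^p}$, I would bound each $\|f\ast\varphi_i^A\|_{L^p}$ for $i\in N_{i_0}(A^*)$. For $p\geq 1$ this is immediate from Young's inequality, since $\|\varphi_i^A\|_{L^1}=\|\varphi\|_{L^1}$ is independent of $i$ (and $\|\Phi\|_{L^1}$ for $i=0$). For $p<1$ one cannot use Young directly; instead I would use the fact that $f\ast\varphi_i^A$ again has Fourier support in a fixed number of anisotropic "dyadic" boxes near $Q_{i_0}^{A^*}$, so a Nikolskii-type / Peetre maximal function argument applies: by \cite[Lemma 3.4]{bownik2006atomic} or the pointwise estimate underlying \Cref{lem:peetretype}, $|f\ast\varphi_i^A|$ is controlled pointwise by $(M_{\rho_A}|f|^{p})^{1/p}$ (or one can pass through $f\ast\varphi_i^A = (f\ast\varphi_A^{(i_0)})\ast\varphi_i^A$ and use that the anisotropic maximal operator is bounded on $L^p$ after raising to a suitable power). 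Either route gives $\|f\ast\varphi_i^A\|_{L^p}\lesssim\|f\|_{L^p}$ with constant independent of $i_0$, whence the upper bound.

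For the lower bound $|\det A|^{\alpha i_0}\|f\|_{L^p}\lesssim\|f\|_{\TLA}$, I would use a reproducing/reconstruction identity: since $\widehat{\varphi_A^{(i_0)}}\equiv 1$ on $Q^{A^*}_{i_0}\supseteq\supp\widehat f$, we have $f=f\ast\varphi_A^{(i_0)}$. Then, as in the proof of \Cref{lem:NormWithStarredPartitionOfUnity}, $\varphi_A^{(i_0)}=\sum_{i\in N_{i_0}(A^*)}\varphi_i^A$ is a sum of at most $2N+1$ terms, so $|f| = |f\ast\varphi_A^{(i_0)}| \leq \sum_{i\in N_{i_0}(A^*)}|f\ast\varphi_i^A|$ pointwise, and taking $L^p$ quasi-norms together with $|\det A|^{\alpha i}\asymp|\det A|^{\alpha i_0}$ yields $|\det A|^{\alpha i_0}\|f\|_{L^p}\lesssim|\det A|^{\alpha i_0}\sum_{i\in N_{i_0}(A^*)}\|f\ast\varphi_i^A\|_{L^p}\lesssim\|f\|_{\TLA}$, where the last step uses the $\ell^q$–$\ell^p$ comparison over the finite index set noted above.

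The main obstacle I anticipate is the case $p<1$ in the upper bound: there Young's inequality fails and one must argue via an anisotropic Nikolskii / Peetre maximal inequality to handle the band-limited pieces $f\ast\varphi_i^A$, being careful that the implicit constants depend only on the compactness of the relevant frequency sets (which are, uniformly in $i_0$, dilates $(A^*)^{i_0}$ of a fixed compact set) and not on $i_0$ itself — this is exactly the uniformity that the anisotropic scaling/dilation covariance of $M_{\rho_A}$ (as used in \Cref{lem:peetretype}) is designed to provide.
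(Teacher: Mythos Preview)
Your plan is correct and structurally identical to the paper's proof: collapse the $\ell^q$-sum to the finitely many indices $i\in N_{i_0}(A^*)$, use Young's inequality for $p\geq 1$, and obtain the lower bound from the reproducing identity $f=f\ast\varphi_A^{(i_0)}$ (the paper packages this last step as an appeal to \Cref{lem:NormWithStarredPartitionOfUnity}, which is exactly your pointwise argument).

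The only real difference is the upper bound for $p<1$. The paper does \emph{not} go through a Peetre maximal inequality; instead it invokes the band-limited convolution estimate $\|f\ast\varphi_i^A\|_{L^p}\leq \Measure(\Omega)^{\frac{1}{p}-1}\|f\|_{L^p}\|\varphi_i^A\|_{L^p}$ from \cite[Theorem~3.4]{voigtlaender2023embeddings} (cf.\ \cite[Section~1.5.1]{triebel2010theory}), after first checking that both $\supp\widehat f$ and $\supp\widehat{\varphi_i^A}$ lie in $(A^*)^{i_0}K^*$ for a single compact set $K^*$ independent of $i_0$. This gives the uniformity in $i_0$ in one line. Your Peetre/Hardy--Littlewood route also works and has the advantage of reusing machinery already set up in \Cref{sec:peetretype}, but watch the exponent: the bound $|f\ast\varphi_i^A|\lesssim (M_{\rho_A}|f|^{p})^{1/p}$ as literally written would require $M_{\rho_A}$ to be bounded on $L^1$. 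You need $(M_{\rho_A}|f|^{1/\eta})^{\eta}$ with some $\eta>1/p$, so that $\eta p>1$; your parenthetical ``after raising to a suitable power'' is exactly this fix, so just make it explicit.
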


\begin{proof}
 Let $f \in \mathcal{S}(\R^d)$ be such that $\supp \widehat{f} \subseteq Q^{A^*}_{i_0}$ for $i_0 \in \N_0$.
 Then, using that $\supp \widehat{\varphi_i^A} = Q_i^{A^*}$ for $i \in \N_0$,
 we see that $f \ast \varphi_i^A = 0$ whenever $i \notin N_{i_0}(A^*)$.
 Therefore,
 \begin{equation} \label{eq:TL_sumLp}
   \begin{split}
     \| f \|_{\TL}
     & = \bigg\|
           \bigg(
             \sum_{i \in N_{i_0} (A^*)}
               \big(|\det A |^{\alpha i} |f \ast \varphi_i^A | \big)^q
           \bigg)^{1/q}
         \bigg\|_{L^p} \\
     & \lesssim_{p,q,N} \sum_{i \in N_{i_0} (A^*)}
                          |\det A|^{\alpha i} \| f \ast \varphi_i^A \|_{L^p},
   \end{split}
 \end{equation}
 with the usual modification in case of $q = \infty$.

 For further estimating the right-hand side above, note that  an application of Young's inequality
 implies that $\| f \ast \varphi_i^A \|_{L^p} \lesssim_{\varphi} \| f \|_{L^p}$
 provided that $p \in [1,\infty)$. For the case $p \in (0,1)$, note first that
 \begin{equation}
   \supp \widehat{f} , \; \supp \widehat{\varphi_i^A}
   \subseteq \bigcup_{\ell = - N}^N Q_{i_0 + \ell}^{A^*}
   \subseteq (A^*)^{i_0} K^\ast
   ,
   \label{eq:NormEstimate1SupportInclusion}
 \end{equation}
 where $K := \bigcup_{\ell = -N}^N (A^*)^{\ell} (\overline{Q} \cup \overline{Q_0})$
 and $K^\ast := \overline{\bigcup_{\ell = -\infty}^0 (A^\ast)^\ell K}$
 are compact and independent of $i_0, i$.
 To show that the second inclusion in \eqref{eq:NormEstimate1SupportInclusion} is indeed true,
 we distinguish two cases:
 In case of $i_0 + \ell \leq N$, we see because of
 $i_0 + \ell \geq \ell \geq -N$ that $Q_{i_0 + \ell}^{A^\ast} \subset K$,
 and thus
 \(
   Q_{i_0 + \ell}^{A^\ast}
   = (A^\ast)^{i_0} (A^\ast)^{-i_0} Q_{i_0 + \ell}^{A^\ast}
   \subset (A^\ast)^{i_0} K^\ast
   .
 \)
 If $i_0 + \ell > N$, then necessarily $i_0 > 0$, and thus
 \(
   Q_{i_0 + \ell}^{A^\ast}
   = (A^\ast)^{i_0 + \ell} Q
   = (A^\ast)^{i_0} (A^\ast)^{\ell} Q
   \subset (A^\ast)^{i_0} K
   \subset (A^\ast)^{i_0} K^\ast
   .
 \)
 In view of \eqref{eq:NormEstimate1SupportInclusion},
 choosing $R > 0$ such that $K^\ast \subseteq \mathcal{B}(0,R)$,
 an application of the convolution relation \cite[Theorem 3.4]{voigtlaender2023embeddings}
 (see also \cite[Section 1.5.1]{triebel2010theory}) yields that
 \begin{align*}
  \| f \ast \varphi_i^A \|_{L^p}
  & \leq [\Lebesgue{(A^*)^{i_0} \mathcal{B}(0,2R)}]^{\frac{1}{p} - 1}
         \| f \|_{L^p} \| \varphi_i^A \|_{L^p} \\
  & \lesssim_{A,\varphi, \Phi, N, p} |\det A|^{(i_0-i) (\frac{1}{p} - 1)}
                                     \| f \|_{L^p} \\
  & \lesssim_{A, N, p} \| f \|_{L^p}.
 \end{align*}

 Thus, $\| f \ast \varphi_i^A \|_{L^p} \lesssim \| f \|_{L^p}$ for all $|i_0 - i | \leq N$
 and all $p \in (0,\infty]$.
 Using this estimate in \eqref{eq:TL_sumLp} gives
 \[
   \| f \|_{\TL}
   \lesssim \sum_{i \in N_{i_0} (A^*)} |\det A |^{\alpha i} \| f \ast \varphi_i^A \|_{L^p}
   \lesssim |\det A |^{\alpha i_0} \| f \|_{L^p},
 \]
 with implicit constants independent of $i_0$ and $f$.

 For the reverse inequality, we use \Cref{lem:NormWithStarredPartitionOfUnity}
 and note that $f = f \ast \varphi_A^{(i_0)}$; thus,
 \[
   \| f \|_{\TL}
   \gtrsim |\det A|^{\alpha i_0}  \| f \ast \varphi_A^{(i_0)} \|_{L^p}
   =       |\det A|^{\alpha i_0}  \| f \|_{L^p}
   .
 \]
 This completes the proof.
\end{proof}

Using \Cref{prop:normestimate1}, we now prove the necessity in \Cref{thm:main}
for the case $\alpha \neq 0$.

\begin{theorem} \label{thm:casenonzero}
  Suppose that $\TL(A) = \TL(B)$ for some $\alpha \in \mathbb{R} \setminus \{0\}$,
  $p \in (0,\infty)$ and $q \in (0,\infty]$.
  Then $A^*$ and $B^*$ are coarsely equivalent.
\end{theorem}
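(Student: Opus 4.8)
The plan is to prove the contrapositive is not quite the right framing; instead I would argue directly, using the test functions provided by \Cref{prop:normestimate1} together with the norm equivalence from \Cref{lem:NormEquivalence}. Suppose $\TL(A) = \TL(B)$ with $\alpha \neq 0$. By \Cref{lem:NormEquivalence} we have $\|\cdot\|_{\TL(A)} \asymp \|\cdot\|_{\TL(B)}$. The idea is to feed into this equivalence Schwartz functions $f$ that are spectrally localized to a single cover element $Q^{A^*}_{i_0}$, and to deduce from the resulting inequalities a quantitative comparison of $|\det A|^{\alpha i_0}$ with $|\det B|^{\alpha j}$ for the indices $j$ whose $B^*$-cover element meets $Q^{A^*}_{i_0}$; by \Cref{lem:cover_coarse}, such a comparison (uniform in $i_0$) is exactly what coarse equivalence of $A^*$ and $B^*$ amounts to.

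Concretely, for fixed $i_0 \in \N_0$, pick a nonzero $f = f_{i_0} \in \mathcal{S}(\R^d)$ with $\supp\widehat{f} \subseteq Q^{A^*}_{i_0}$; e.g.\ one can take $\widehat{f} = \widehat{\varphi^A_{i_0}} \cdot g$ for a suitable fixed bump $g$, or use a dilated copy of $\chi$ from \Cref{sec:notation2}, arranging via the anisotropic dilation structure that $\|f_{i_0}\|_{L^p}$ is comparable to a fixed constant times an explicit power of $|\det A|^{i_0}$ (this is the same normalization bookkeeping as in \cite{koppensteiner2023classification}). Then \Cref{prop:normestimate1} gives $\|f_{i_0}\|_{\TL(A)} \asymp |\det A|^{\alpha i_0}\|f_{i_0}\|_{L^p}$. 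On the other hand, since $\supp\widehat{f_{i_0}} \subseteq Q^{A^*}_{i_0}$, the distribution $f_{i_0}\ast\psi^B_j$ vanishes unless $P^{B^*}_j \cap Q^{A^*}_{i_0} \neq \emptyset$, i.e.\ unless $j \in J_{i_0}$; and for such $j$ the function $f_{i_0}$ is spectrally localized in $\bigcup_{\ell}P^{B^*}_{j+\ell}$ over a bounded range of $\ell$, so a computation entirely parallel to the proof of \Cref{prop:normestimate1} — but with the roles of the two covers swapped and without yet knowing $|J_{i_0}|$ is bounded — yields a two-sided estimate of $\|f_{i_0}\|_{\TL(B)}$ in terms of $\sum_{j\in J_{i_0}}|\det B|^{\alpha j}\|f_{i_0}\ast\psi^B_j\|_{L^p}$ and, via a Nikolskii/Plancherel-Polya convolution bound, in terms of $\max_{j\in J_{i_0}}|\det B|^{\alpha j}\|f_{i_0}\|_{L^p}$ up to multiplicative constants depending only on $A,B,\alpha,p,q$ and on the cardinality $|J_{i_0}|$. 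Combining with $\|f_{i_0}\|_{\TL(A)}\asymp\|f_{i_0}\|_{\TL(B)}$ gives, for each $i_0$ and each $j \in J_{i_0}$,
\[
  |\det A|^{\alpha i_0} \lesssim |J_{i_0}| \cdot |\det B|^{\alpha j}
  \quad\text{and, testing with a different choice of }f,\quad
  |\det B|^{\alpha j} \lesssim |J_{i_0}|\cdot|\det A|^{\alpha i_0}.
\]

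The remaining point, and the one I expect to be the main obstacle, is to upgrade these estimates — which a priori still carry the unknown factors $|J_{i_0}|$ — into the \emph{uniform} comparison $\tfrac1C|\det B|^j \le |\det A|^{i_0} \le C|\det B|^j$ whenever $Q^{A^*}_{i_0}\cap P^{B^*}_j\neq\emptyset$ required to invoke \Cref{lem:cover_coarse}(iii)$\Rightarrow$(i). There are two sub-issues: first, removing the dependence on $|J_{i_0}|$; second, going from control along the cover elements to control of the cardinalities $|J_i|,|I_j|$ themselves. A clean way is to use a superposition argument: for a fixed $i_0$, instead of a single test function use $f = \sum_{j\in J_{i_0}} \lambda_j f_j$ where each $f_j$ has $\supp\widehat{f_j}$ inside $P^{B^*}_j \cap (A^*)^{i_0}K^*$-type neighbourhoods, chosen with disjoint-ish supports on the $B^*$ side so that the $B$-norm of $f$ behaves like an $\ell^q$-sum $\big(\sum_j (\lambda_j|\det B|^{\alpha j})^q\big)^{1/q}$ (times a fixed constant), while the $A$-norm behaves like $|\det A|^{\alpha i_0}$ times an $L^p$-norm that is itself an $\ell^2$- or $\ell^q$-type aggregate of the $\lambda_j$. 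Optimizing over the weights $\lambda_j$, exactly as in \cite[Proof of Proposition 5.3 / Section 5]{koppensteiner2023classification}, forces $|J_{i_0}|$ to be bounded (otherwise the two sides scale differently in $|J_{i_0}|$ because $\alpha \neq 0$ makes the powers $|\det B|^{\alpha j}$ genuinely spread out), and simultaneously pins down $|\det A|^{i_0}\asymp|\det B|^j$. Here the hypothesis $\alpha\neq0$ is used essentially: it guarantees the weights $|\det A|^{\alpha i}$ are a nontrivial geometric sequence, so that spectral localization in a single cover element is "seen" by the Triebel--Lizorkin norm with a sharp power, which is precisely what breaks down when $\alpha=0$ (the case handled separately in \Cref{sec:necessary} for $\alpha=0$). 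Once $\sup_{i}|J_i|<\infty$ and $\sup_j|I_j|<\infty$ and the matching of determinants are established, \Cref{lem:cover_coarse}(iii)$\Rightarrow$(i) gives the coarse equivalence of $A^*$ and $B^*$, completing the proof.
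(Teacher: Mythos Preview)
Your plan creates an obstacle that the paper simply sidesteps, and your proposed fix for that obstacle is not convincingly worked out.

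The gap is this: by choosing $f_{i_0}$ with $\supp\widehat{f_{i_0}} \subseteq Q^{A^*}_{i_0}$ but \emph{not} inside any single $P^{B^*}_j$, you lose control of $\|f_{i_0}\|_{\TL(B)}$: \Cref{prop:normestimate1} no longer applies on the $B$-side, and the ad hoc estimate you sketch picks up factors of $|J_{i_0}|$, which is exactly the quantity you are trying to bound. Your superposition remedy then needs the summands $f_j$ to be spectrally separated on the $B$-side (so that the $B$-norm becomes an $\ell^q$-sum as in \Cref{prop:normestimate2}), but for distinct $j,j'\in J_{i_0}$ there is no reason that $|j-j'|>2N$; arranging this requires precisely the kind of thinning and bookkeeping that the paper only develops later for the harder case $\alpha=0$. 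In short, the argument as written is circular or at best incomplete.

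The paper's proof avoids all of this with one observation: for each pair $(i,j)$ with $Q^{A^*}_i\cap P^{B^*}_j\neq\emptyset$, pick $\xi_0$ and $\delta>0$ with $\CalB(\xi_0,\delta)\subseteq Q^{A^*}_i\cap P^{B^*}_j$ and set $f^{(\delta)}:=M_{\xi_0}\chi_\delta$. Now $\supp\widehat{f^{(\delta)}}$ lies in a single cover element on \emph{both} sides, so \Cref{prop:normestimate1} applies twice and, together with \Cref{lem:NormEquivalence}, gives
\[
  |\det A|^{\alpha i}\,\delta^{d(1-1/p)}
  \asymp \|f^{(\delta)}\|_{\TL(A)}
  \asymp \|f^{(\delta)}\|_{\TL(B)}
  \asymp |\det B|^{\alpha j}\,\delta^{d(1-1/p)},
\]
with implicit constants independent of $i,j,\delta$. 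Cancelling $\delta^{d(1-1/p)}$ and using $\alpha\neq 0$ yields the uniform determinant comparison directly, with no $|J_i|$ factor at all. Then \Cref{lem:cardinality_uniform} bounds $|J_i|$ and $|I_j|$ uniformly, and \Cref{lem:cover_coarse} finishes. The whole argument is a few lines; the key idea you are missing is to localize in the \emph{intersection} rather than in $Q^{A^*}_{i_0}$ alone.
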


\begin{proof}
Let $i, j \in \N_0$ be arbitrary with $Q_{i}^{A^*} \cap P_{j}^{B^*} \neq \emptyset$.
Choose $\xi_0 \in \R^d$ and $\delta > 0$ such that
$\mathcal{B}(\xi_0, \delta) \subseteq Q_{i}^{A^*} \cap P_{j}^{B^*}$,
which is possible since $Q_{i}^{A^*} , P_{j}^{B^*}$ are open.
Define $f^{(\delta)} := M_{\xi_0}  \chi_{\delta}$, where $\chi$ is as in Section~\ref{sec:notation2}.
Then it follows that
\(
  \supp \widehat{f^{(\delta)} }
  \subseteq \mathcal{B}(\xi_0, \delta)
  \subseteq Q_i^{A^*} \cap P_j^{B^*}
  .
\)
Hence, applying \Cref{prop:normestimate1} to $f^{(\delta)}$ (with $A$ and $B$) gives
\[
  |\det A|^{\alpha i} \delta^{d(1-\frac{1}{p})}
  \asymp |\det A|^{\alpha i} \| f^{(\delta)} \|_{L^p}
  \asymp \| f^{(\delta)} \|_{\TL(A)}
  \asymp \| f^{(\delta)} \|_{\TL(B)} \|
  \asymp |\det B|^{\alpha j} \delta^{d(1-\frac{1}{p})},
\]
where we also used \Cref{lem:NormEquivalence}.
Note that the implicit constants are independent of $i,j$.
Thus, canceling the factor involving $\delta$, we see that there exists a constant $C > 0$
(independent of $i,j$) such that
\[
  \frac{1}{C} |\det A^\ast|^{\alpha i}
  \leq |\det B^\ast|^{\alpha j}
  \leq C |\det A^\ast|^{\alpha i}
  \quad \text{for all $i,j \in \N_0$ for which}
  \quad Q_{i}^{A^*} \cap P_{j}^{B^*} \neq \emptyset.
\]
Since $\alpha \neq 0$, an application of \Cref{lem:cardinality_uniform} therefore yields a constant
$M \in \mathbb{N}$ such that
\[
  J_i \subseteq \big\{ j \in \N_0 : | j - \lfloor \varepsilon i  \rfloor | \leq M \big\}
  \qquad \text{and} \qquad
  I_j \subseteq \Big\{ i \in \N_0 : \Big| i - \Big\lfloor  \frac{j}{\varepsilon}   \Big\rfloor \Big| \leq M \Big\}
\]
for all $i, j \in \N_0$, where $\varepsilon := \ln |\det A| / \ln |\det B|$.
In particular, this implies that $|J_i|, |I_j| \lesssim 1$ with implicit constant independent of $i, j \in \N_0$.
Thus, $A^*$ and $B^*$ are coarsely equivalent by \Cref{lem:cover_coarse}.
\end{proof}

\subsection{The case \texorpdfstring{$\alpha = 0$}{α = 0} and \texorpdfstring{$q \neq 2$}{q ≠ 2}}

This subsection is concerned with proving the necessary condition for the case  $\alpha = 0$ and $q \neq 2$.
For this, we need in addition to \Cref{prop:normestimate1} the following more refined version.

\begin{proposition} \label{prop:normestimate2}
Let  $\alpha \in \R$, $p \in (0, \infty)$ and $q \in (0,\infty]$.
For $K \in \N$, let $(i_k)_{k = 1}^K$ be a sequence in $\N_0$ such that $|i_k - i_{k'}| > 2N$
for $k \neq k'$, where $N \in \N$ is the constant fixed in \Cref{sec:notation2}.
Let $\chi$ be as in \Cref{sec:notation2}.

If there exist $\delta > 0$ and points $\xi_1, ..., \xi_K \in \R^d$ such that
\[
  \mathcal{B}(\xi_k, \delta) \subseteq Q_{i_k}^{A^*},
  \quad \text{for all} \quad k = 1, ..., K,
\]
then, for any $c \in \mathbb{C}^K$, the function $f := \sum_{k = 1}^K c_k M_{\xi_k} \chi_{\delta}$
satisfies the norm estimate
\begin{equation}
  \| f \|_{\TL(A)}
  \asymp \delta^{d(1-1/p)} \bigg\| \big(|\det A |^{\alpha i_k} |c_k |\big)_{k = 1}^K \bigg\|_{\ell^q},
  \label{eq:NormEstimate2}
\end{equation}
with implicit constants independent of $K, c, \delta, (\xi_k)_{k = 1}^K$ and $(i_k)_{k = 1}^K$.
\end{proposition}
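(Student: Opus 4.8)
The plan is to exploit the frequency separation of the pieces $M_{\xi_k}\chi_\delta$: because $\operatorname{supp}\widehat{M_{\xi_k}\chi_\delta}\subseteq\mathcal{B}(\xi_k,\delta)\subseteq Q_{i_k}^{A^*}$ and the indices $i_k$ are spread out by more than $2N$, the neighbourhoods $N_{i_k}(A^*)$ are pairwise disjoint, so the Littlewood-Paley pieces $f\ast\varphi_i^A$ are "almost orthogonal" in the sense that each $i\in\N_0$ can interact with at most one $\xi_k$. First I would make this precise: for $i\in\N_0$ write $f\ast\varphi_i^A=\sum_{k}c_k\,(M_{\xi_k}\chi_\delta)\ast\varphi_i^A$, and observe that the $k$-th summand vanishes unless $i\in N_{i_k}(A^*)$, i.e.\ $|i-i_k|\le N$; by the separation hypothesis $|i_k-i_{k'}|>2N$, for each fixed $i$ there is at most one such $k=k(i)$. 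Hence $|f\ast\varphi_i^A|=|c_{k(i)}|\,|(M_{\xi_{k(i)}}\chi_\delta)\ast\varphi_i^A|$ when such $k(i)$ exists, and $0$ otherwise. Moreover, since also $|\det A|^{\alpha i}\asymp|\det A|^{\alpha i_k}$ when $|i-i_k|\le N$ (the implied constant depending only on $\alpha,N,A$), one can pull the weight out with a uniform constant.

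Next I would reduce \eqref{eq:NormEstimate2} to the single-bump estimate of \Cref{prop:normestimate1}. For the upper bound: using the disjointness, $\big\|\big(|\det A|^{\alpha i}|f\ast\varphi_i^A|\big)_{i\in\N_0}\big\|_{\ell^q}^q=\sum_k|c_k|^q\sum_{i\in N_{i_k}(A^*)}\big(|\det A|^{\alpha i}|(M_{\xi_k}\chi_\delta)\ast\varphi_i^A|\big)^q$, and then taking $L^p$ quasi-norms (with the $r$-triangle inequality for $r=\min\{1,p,q\}$ if $q<p$, or directly if $q\ge p$ after pulling out, as in \Cref{prop:normestimate1}) one is reduced to bounding, for each $k$, $\big\|\big(|\det A|^{\alpha i}|(M_{\xi_k}\chi_\delta)\ast\varphi_i^A|\big)_{i}\big\|_{\ell^q}\big\|_{L^p}=\|M_{\xi_k}\chi_\delta\|_{\TL(A)}$, which by \Cref{prop:normestimate1} is $\asymp|\det A|^{\alpha i_k}\|M_{\xi_k}\chi_\delta\|_{L^p}\asymp|\det A|^{\alpha i_k}\delta^{d(1-1/p)}$. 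The point is that the pieces live over pairwise disjoint regions of $\R^d$ \emph{in frequency}, but in \emph{space} the functions $M_{\xi_k}\chi_\delta$ have full support, so one cannot simply add $L^p$ quasi-norms of $|f\ast\varphi_i^A|$ for different $k$ unless one works inside the $\ell^q$-sum first and only afterwards takes the $L^p$ quasi-norm; this ordering is exactly what the mixed-norm structure of $\TL(A)$ permits and is the crux of the argument.

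For the lower bound I would use \Cref{lem:NormWithStarredPartitionOfUnity} together with the partition functions $\varphi_A^{(i)}$ from \Cref{sec:notation2}, which satisfy $\widehat{\varphi_A^{(i)}}\equiv1$ on $Q_i^{A^*}$. Since $\operatorname{supp}\widehat{M_{\xi_k}\chi_\delta}\subseteq Q_{i_k}^{A^*}$ we get $c_kM_{\xi_k}\chi_\delta=(c_kM_{\xi_k}\chi_\delta)\ast\varphi_A^{(i_k)}$, and because the $i_k$ are $2N$-separated while $N_{i_k}(A^*)\subseteq\{|i-i_k|\le N\}$, the pieces $f\ast\varphi_A^{(i_k)}=c_kM_{\xi_k}\chi_\delta$ are \emph{exactly} disjointly supported in frequency, so in fact $f\ast\varphi_A^{(i)}$ for distinct relevant $i$ pick out distinct single bumps. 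Then $\big\|\big\|\big(|\det A|^{\alpha i}|f\ast\varphi_A^{(i)}|\big)_i\big\|_{\ell^q}\big\|_{L^p}\gtrsim\big\|\big\|\big(|\det A|^{\alpha i_k}|c_k||M_{\xi_k}\chi_\delta|\big)_{k=1}^K\big\|_{\ell^q}\big\|_{L^p}$, and since $|M_{\xi_k}\chi_\delta|=|\chi_\delta|$ is independent of $k$, the $\ell^q$-norm factors out as $\big\|(|\det A|^{\alpha i_k}|c_k|)_k\big\|_{\ell^q}\cdot\|\chi_\delta\|_{L^p}\asymp\big\|(|\det A|^{\alpha i_k}|c_k|)_k\big\|_{\ell^q}\cdot\delta^{d(1-1/p)}$, giving the claimed lower bound after invoking \Cref{lem:NormWithStarredPartitionOfUnity}.

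The main obstacle I anticipate is the bookkeeping in the upper bound when $q<1$ or $q<p$: one must be careful about the order in which the $\ell^q$-sum over $i$, the sum over $k$, and the $L^p$-quasi-norm are taken, using the $r$-triangle inequality with $r=\min\{1,p,q\}$ at the right moment and not losing $k$-uniformity of constants. A secondary point to watch is the case $q=\infty$, where the $\ell^q$-sum becomes a supremum; there the disjointness makes the argument if anything cleaner, since $\sup_i(\cdots)=\max_k\sup_{i\in N_{i_k}(A^*)}(\cdots)$ pointwise, but one still needs \Cref{prop:normestimate1} in its $q=\infty$ form. Apart from this, every estimate reduces to the single-bump proposition and the elementary fact that $|M_{\xi_k}\chi_\delta|$ does not depend on $k$, so no genuinely new ideas beyond \Cref{prop:normestimate1} and \Cref{lem:NormWithStarredPartitionOfUnity} are required.
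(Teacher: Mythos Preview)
Your lower bound is correct and matches the paper's argument exactly. The gap is in the upper bound, precisely in the case $q>p$ that you flag as the \emph{easier} one.

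After the disjointness observation you have, pointwise,
\[
  \Big\|\big(|\det A|^{\alpha i}|f\ast\varphi_i^A(x)|\big)_{i}\Big\|_{\ell^q}^q
  \;=\;\sum_{k=1}^K |c_k|^q\, G_k(x),
  \qquad
  G_k(x):=\!\!\sum_{i\in N_{i_k}(A^*)}\!\!\big(|\det A|^{\alpha i}|(M_{\xi_k}\chi_\delta)\ast\varphi_i^A(x)|\big)^q,
\]
and \Cref{prop:normestimate1} tells you only that $\|G_k^{1/q}\|_{L^p}\asymp|\det A|^{\alpha i_k}\delta^{d(1-1/p)}$. From this norm information alone you cannot get the $\ell^q$-bound when $q>p$: applying the triangle inequality in $L^{p/q}$ (if $p\ge q$) gives the correct $\ell^q$-bound, but when $p<q$ the subadditivity $(\sum_k a_k)^{p/q}\le\sum_k a_k^{p/q}$ only yields
\[
  \|f\|_{\TL(A)}^p\le\sum_k|c_k|^p\,\|G_k^{1/q}\|_{L^p}^p
  \;\asymp\;\delta^{d(p-1)}\,\big\|(|\det A|^{\alpha i_k}|c_k|)_k\big\|_{\ell^p}^p,
\]
i.e.\ an $\ell^p$-bound rather than an $\ell^q$-bound, and $\|\cdot\|_{\ell^p}\ge\|\cdot\|_{\ell^q}$ goes the wrong way. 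A norm-level reduction to the single-bump proposition cannot work here, because in general $\|(\sum_k a_k^q g_k^q)^{1/q}\|_{L^p}$ with $\|g_k\|_{L^p}\le 1$ is \emph{not} controlled by $\|(a_k)\|_{\ell^q}$ uniformly in $K$ (take the $g_k$ disjointly supported).

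What the paper does instead is prove a \emph{pointwise} estimate, uniform in $k$ and in $i\in N_{i_k}(A^*)$:
\[
  |(M_{\xi_k}\chi_\delta)\ast\varphi_i^A(x)|
  \;\lesssim\; \delta^d\,(1+|\delta x|)^{-d/p-1},
\]
obtained by bounding $|\chi_\delta|\ast|\varphi_i^A|$ via \cite[Lemma~A.3]{koppensteiner2023classification}. Since the right-hand side does not depend on $k$, the $\ell^q$-sum over $k$ factors out \emph{before} taking the $L^p$-norm, and the remaining $L^p$-integral of the fixed envelope gives $\delta^{d(1-1/p)}$. This is the missing ingredient; once you have it the argument is uniform across all $p,q$ and no case distinction is needed.
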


\begin{proof}
We only deal with the case $q < \infty$; the case $q = \infty$ follows by the usual modification.
The proof follows (parts of) the arguments proving \cite[Proposition 5.5]{koppensteiner2023classification} closely.

Throughout, let $\delta, (\xi_k)_{k = 1}^K,(i_k)_{k = 1}^K$, and $f$ be as in the statement of the proposition.
Then, since $\mathcal{B}(\xi_k, \delta) \subseteq Q_{i_k}^{A^*}$, it follows that
$\supp \widehat{M_{\xi_k} \chi_\delta} = \supp T_{\xi_k} \widehat{\chi_{\delta}} \subseteq Q^{A^*}_{i_k}$ for $k = 1, ..., K$.
On the other hand, $\supp \widehat{\varphi_i^A} = Q_i^{A^*}$ for $i \in \N_0$.
Therefore, $M_{\xi_k} \chi_{\delta} \ast \varphi_i^A = 0$ whenever $|i - i_k | > N$
as then $i \notin N_{i_k}(A^*)$.
Since, for fixed $i \in \N_0$, there can be at most one $i_k$ such that $|i - i_k|\leq N$,
it follows that
\[
  f \ast \varphi_i^A
  = \sum_{\ell = 1}^K c_\ell \cdot (M_{\xi_\ell} \chi_{\delta} \ast \varphi_i^A)
  = \begin{cases}
      c_k \cdot (M_{\xi_k} \chi_{\delta} \ast \varphi_i^A) , & \text{if } |i-i_k| \leq N \text{ for some } 1 \leq k \leq K \\
      0,                                                     & \text{if } |i - i_k| > N \text{ for all } 1 \leq k \leq K.
    \end{cases}
\]
Therefore, if $|i- i_k | \leq N$, we can estimate
\begin{align} \label{eq:penultimate}
  | f \ast \varphi_i^A (x)|
  & \leq |c_k| \cdot (|\chi_{\delta}| \ast| \varphi_i^A|) (x)
  \lesssim_{d, p, N, \varphi, \Phi, \chi, A} |c_k|  \delta^d  (1+|\delta x |)^{-\frac{d}{p} - 1},
\end{align}
where the last inequality follows from an application%
\footnote{The statement of \cite[Lemma A.3]{koppensteiner2023classification} assumes
  slightly different conditions on $\varphi$, but its proof is valid
for general Schwartz functions $\varphi \in \mathcal{S}(\R^d)$.}
of \cite[Lemma A.3]{koppensteiner2023classification}
(applied to the bounded set $Q \cup Q_0$,  $\ell = i_k$ and $M = d/p + 1$).
This, together with $|f \ast \varphi_i^A (x)| = 0$ for $|i - i_k| > N$, yields the estimate
\begin{align*}
  \bigg( \sum_{i \in \N_0} \big( |\det A|^{\alpha i} |f \ast \varphi^A_i (x) | \big)^q \bigg)^{1/q}
  &\leq \bigg(
          \sum_{k = 1}^K \,\,
            \sum_{\substack{i \in \N_0 \\ |i - i_k| \leq N}} \,\,
              \big(|\det A|^{\alpha i} |f \ast \varphi^A_i (x) | \big)^q
        \bigg)^{1/q} \\
  &\lesssim \bigg(
              \sum_{k = 1}^K 
              \big(
                |\det A|^{\alpha i_k}
                |c_k|
                 \delta^d
                 (1 + |\delta x|)^{-\frac{d}{p} - 1}
              \big)^q
            \bigg)^{1/q} \\
  &= \delta^d
      (1 + |\delta x|)^{-\frac{d}{p} - 1}
      \bigg\| \big(|\det A|^{\alpha i_k} |c_k| \big)_{k = 1}^K \bigg\|_{\ell^q},
\end{align*}
where the penultimate step uses Equation \eqref{eq:penultimate}
and $N_{i_k} (A^*) \lesssim_N 1$ for $k = 1, ..., K$.
Hence, taking the $L^p$-(quasi)-norm yields
\begin{align*}
  \| f \|_{\TL(A)}
  & \lesssim \bigg(
               \int_{\R^d} \big(\delta^d (1+|\delta x|)^{-\frac{d}{p} - 1} \big)^p \; dx
             \bigg)^{1/p}
             \bigg\| \big(|\det A|^{\alpha i_k} |c_k| \big)_{k = 1}^K \bigg\|_{\ell^q} \\
  & \lesssim_{d, p} \delta^{d(1-1/p)}
                    \bigg\|
                      \big(|\det A|^{\alpha i_k} |c_k| \big)_{k = 1}^K
                    \bigg\|_{\ell^q},
\end{align*}
which establishes one of the inequalities in \Cref{eq:NormEstimate2}.

For the reverse inequality, note first for $\varphi_A^{(i)}$ as in \Cref{sec:notation2} that 
\[
  \supp \widehat{\varphi^{(i_k)}_A} \subseteq \bigcup_{\ell = -N}^N Q^{A^*}_{i_k + \ell}
  \quad \text{for all } k = 1,\dots, K
  .
\]
The assumption $|i_{k'} - i_k | > 2N$ for $k \neq k'$,
together with $N_i(A^*) \subseteq \{j \in \N_0 : | i - j | \leq N \}$ for all $i \in \N_0$
(see \Cref{sec:notation2}), yields
\[
  Q^{A^*}_{i_{k}} \cap \bigcup_{\ell = -N}^N Q^{A^*}_{i_{k'} + \ell} = \emptyset,
  \quad k \neq k',
\]
and hence $M_{\xi_k} \chi_{\delta} \ast \varphi_A^{(i_{k'})} = 0$ for $k \neq k'$.
Additionally, $\widehat{\varphi_A^{(i_k)}} \equiv 1$ on $Q^{A^*}_{i_k}$, and thus
\[
  f \ast \varphi_A^{(i_k)} = c_k \cdot M_{\xi_k} \chi_{\delta}
\]
for all $k = 1, ..., K$.
Using this identity, together with \Cref{lem:NormWithStarredPartitionOfUnity},
a direct calculation entails
\begin{align*}
  \| f \|_{\TL(A)}
  \gtrsim \bigg\|
            \bigg(
              \sum_{k = 1}^K \big(|\det A|^{\alpha i_k} | f \ast \varphi_A^{(i_k)} | \big)^q
            \bigg)^{1/q}
          \bigg\|_{L^p}
  \geq \| \chi_{\delta} \|_{L^p}
       \bigg\| \big(|\det A|^{\alpha i_k} |c_k| \big)_{k = 1}^K \bigg\|_{\ell^q} .
\end{align*}
Since $\| \chi_{\delta} \|_{L^p} = \delta^{d(1-1/p)} \| \chi \|_{L^p}$, this finishes the proof.
\end{proof}

\begin{theorem} \label{thm:casenzero1}
  Let  $p \in (0, \infty)$ and $q \in (0, \infty]$.
  If $\TLzero(A) = \TLzero(B)$ and $A^*$ and $B^*$ are not coarsely equivalent, then $q = 2$.

  Consequently, if $\TLzero(A) = \TLzero(B)$ for some $q \neq 2$, then $A^*$ and $B^*$ are coarsely equivalent.
\end{theorem}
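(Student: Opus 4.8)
The plan is to argue by contraposition: assume $\TLzero(A) = \TLzero(B)$ while $A^\ast$ and $B^\ast$ are \emph{not} coarsely equivalent, and deduce $q = 2$. By \Cref{lem:NormEquivalence} the quasi-norms $\|\cdot\|_{\TLzero(A)}$ and $\|\cdot\|_{\TLzero(B)}$ are then equivalent, so it suffices to produce test functions on which these two quasi-norms compute genuinely different quantities unless $q = 2$. By \Cref{lem:cover_coarse}, the failure of coarse equivalence means $\sup_{i \in \N_0}|J_i| + \sup_{j \in \N_0}|I_j| = \infty$; since the statement and the hypothesis $\TLzero(A) = \TLzero(B)$ are symmetric in $A$ and $B$, I may assume $\sup_{j \in \N_0}|I_j| = \infty$, i.e.\ that a single frequency piece $P^{B^\ast}_j$ meets arbitrarily many pieces $Q^{A^\ast}_i$ (the case $\sup_i|J_i| = \infty$ being handled by interchanging $A$ and $B$, and with the roles of \Cref{prop:normestimate1} and \Cref{prop:normestimate2} exchanged accordingly).

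The next step is the construction. Fix $K \in \N$ and choose $j \in \N_0$ with $|I_j| \geq K(2N+1)$, where $N$ is the constant from \Cref{sec:notation2}. By the pigeonhole principle applied to the residues modulo $2N+1$, the set $I_j$ contains indices $i_1 < i_2 < \dots < i_K$ lying in one residue class, so that $i_{k+1} - i_k \geq 2N+1 > 2N$ for all $k$. For each $k$, the set $Q^{A^\ast}_{i_k} \cap P^{B^\ast}_j$ is open and nonempty, so I may pick $\xi_k$ in it together with $\delta_k > 0$ satisfying $\mathcal{B}(\xi_k,\delta_k) \subseteq Q^{A^\ast}_{i_k} \cap P^{B^\ast}_j$; put $\delta := \min_{1 \leq k \leq K}\delta_k > 0$. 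Then $\mathcal{B}(\xi_k,\delta) \subseteq Q^{A^\ast}_{i_k}$ for every $k$, and $\bigcup_k \mathcal{B}(\xi_k,\delta) \subseteq P^{B^\ast}_j$.

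For any $c = (c_k)_{k=1}^K \in \CC^K$ set $f := \sum_{k=1}^K c_k\, M_{\xi_k}\chi_\delta$, with $\chi$ as in \Cref{sec:notation2}. Because $\supp\widehat{f} \subseteq P^{B^\ast}_j$, \Cref{prop:normestimate1} applied with $B$ (and $\alpha = 0$, so the determinant factor is $1$) gives $\|f\|_{\TLzero(B)} \asymp \|f\|_{L^p}$; because the $i_k$ are $2N$-separated and $\mathcal{B}(\xi_k,\delta) \subseteq Q^{A^\ast}_{i_k}$, \Cref{prop:normestimate2} applied with $A$ (again $\alpha = 0$) gives $\|f\|_{\TLzero(A)} \asymp \delta^{d(1-1/p)}\,\|c\|_{\ell^q}$. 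Combining these two with \Cref{lem:NormEquivalence} yields $\|f\|_{L^p} \asymp \delta^{d(1-1/p)}\,\|c\|_{\ell^q}$, with implied constants independent of $K$, $c$, $\delta$, $j$ and the $\xi_k$, $i_k$.

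Finally I randomize: replace $c_k$ by $\eps_k c_k$ with $(\eps_k)_{k=1}^K$ independent Rademacher signs. Since $\|(\eps_k c_k)_k\|_{\ell^q} = \|c\|_{\ell^q}$, the previous display gives, for \emph{every} sign pattern, $\bigl\|\sum_k \eps_k c_k M_{\xi_k}\chi_\delta\bigr\|_{L^p}^p \asymp \delta^{pd(1-1/p)}\|c\|_{\ell^q}^p$, hence the same holds for the expectation over the $\eps_k$. On the other hand, Khintchine's inequality applied pointwise in $x$, followed by Fubini's theorem and the identity $|M_{\xi_k}\chi_\delta(x)| = |\chi_\delta(x)|$, gives $\mathbb{E}\bigl\|\sum_k \eps_k c_k M_{\xi_k}\chi_\delta\bigr\|_{L^p}^p \asymp \bigl\|\bigl(\sum_k |c_k|^2\bigr)^{1/2}|\chi_\delta|\bigr\|_{L^p}^p = \|c\|_{\ell^2}^p\,\|\chi_\delta\|_{L^p}^p = \delta^{pd(1-1/p)}\|c\|_{\ell^2}^p\,\|\chi\|_{L^p}^p$. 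Comparing the two, $\|c\|_{\ell^q} \asymp \|c\|_{\ell^2}$ for all $c \in \CC^K$ and all $K$, uniformly in $K$; evaluating at $c = (1,\dots,1)$ forces $K^{1/q} \asymp K^{1/2}$ uniformly in $K$, which is only possible if $q = 2$. This proves the first assertion, and the second (``consequently'') is its contrapositive. The only points needing genuine care are: (a) that the $B$-side estimate, \Cref{prop:normestimate1}, requires $\widehat{f}$ to be supported in \emph{one} piece of the $B$-cover — this is precisely what dictates which cover must have unbounded overlap and hence why the reduction in the first paragraph is set up as it is; and (b) the randomization step, where one must check that Khintchine's inequality is used with the correct exponent $p$ (valid for all $0 < p < \infty$) and that all constants remain independent of $K$.
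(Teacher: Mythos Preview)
Your proof is correct and follows essentially the same approach as the paper: the same pigeonhole selection of $(2N+1)$-separated indices in a single large $I_j$, the same test functions $\sum_k c_k M_{\xi_k}\chi_\delta$, the application of \Cref{prop:normestimate1} on the $B$-side and \Cref{prop:normestimate2} on the $A$-side, and the Khintchine randomization to force $\|c\|_{\ell^q}\asymp\|c\|_{\ell^2}$ uniformly in $K$. The only differences are cosmetic (you take $\delta=\min_k\delta_k$ explicitly and randomize after establishing the deterministic estimate, whereas the paper introduces the random signs from the outset).
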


\begin{proof}
Suppose that $\TLzero(A) = \TLzero(B)$ and that $A^*$ and $B^*$ are not coarsely equivalent.
By \Cref{lem:cover_coarse}, the latter condition is equivalent to
$\sup_{i \in \N_0} |J_i| + \sup_{j \in \N_0} |I_j| = \infty$.
Throughout, we assume that $\sup_{j \in \N_0} |I_j | = \infty$, the other case being similar.
We split the proof into two steps.
\\~\\
\textbf{Step 1.}
In this step, we show that, for arbitrary $K \in \N$, there exist $\delta > 0$ and
$j_0 = j_0 (K) \in \N_0$, as well as sequences $(i_k)_{k = 1}^K \subseteq \N_0$
and $(\xi_k)_{k = 1}^K \subseteq \R^d$
satisfying the assumptions of \Cref{prop:normestimate2} and furthermore
$\CalB (\xi_k, \delta) \subseteq Q_{i_k}^{A^\ast} \cap P_{j_0}^{B^\ast}$.

Since $\sup_{j \in \N_0} |I_j | = \infty$, there exists $j_0 \in \N_0$
for which $|I_{j_0}| \geq (2N+1) K$, where $N \in \N$ is the fixed constant from \Cref{sec:notation2}.
For $n = 0, ..., 2N$, set $\N_0^{(n)} := n + (2N+1) \N_0$.
Then $I_{j_0} = \bigcup_{n = 0}^{2N} (\N_0^{(n)} \cap I_{j_0})$,
and hence there exists $n \in \{0, ..., 2N \}$ for which $|I_{j_0} \cap \N_0^{(n)} | \geq K$.
Thus, there exist pairwise distinct indices $i_1, ..., i_K \in I_{j_0} \cap \N_0^{(n)}$,
which then necessarily satisfy $|i_k - i_{k'} | \geq 2N+1$ for $k\neq k'$.
The intersections $Q^{A^*}_{i_k} \cap P^{B^*}_{j_0} \neq \emptyset$ being open
for each $k \in \{1, ..., K\}$, one can choose points $\xi_1, ..., \xi_K \in \R^d$
and a constant $\delta > 0$ such that
\begin{align} \label{eq:ball}
\mathcal{B}(\xi_k, \delta) \subseteq Q^{A^*}_{i_k} \cap P^{B^*}_{j_0}, \quad k = 1, ..., K,
\end{align}
 as required.
\\~\\
\textbf{Step 2.}
Let $K \in \N$, and let $\delta > 0$, $j_0 \in \N_0$,
as well as $(i_k)_{k = 1}^K$ and $(\xi_k)_{k = 1}^K$ be as in Step 1,
and let $c \in \mathbb{C}^K$ be arbitrary.
Given $\theta \in \{-1, +1\}^K$, define
\[
  f_{\theta, c}
  := \sum_{k = 1}^K \theta_k \, c_k \, M_{\xi_k} \, \chi_{\delta}
  \in \Schwartz(\R^d).
\]
If $\theta$ is considered as a random vector which is uniformly distributed in $\{ \pm 1 \}^K$
and denoting the expectation with respect to $\theta$ by $\EE_{\theta}$,
then an application of Khintchine's inequality
(see, e.g., \cite[Proposition 4.5]{WolffLecturesOnHarmonicAnalysis}) gives
\begin{align*}
  \mathbb{E}_{\theta} \| f_{\theta, c} \|_{L^p}^p
  & = \mathbb{E}_{\theta}
        \int_{\R^d}
          |\chi_{\delta} (x)|^p \;
          \bigg|
            \sum_{k = 1}^K
              \theta_k \,
              c_k \,
              e^{2 \pi i \xi_k \cdot x }
          \bigg|^p
        \; dx \\
  & = \int_{\R^d}
          |\chi_{\delta} (x)|^p \;
          \mathbb{E}_{\theta}
          \bigg|
            \sum_{k = 1}^K
              \theta_k \,
              c_k \,
              e^{2 \pi i \xi_k \cdot x }
          \bigg|^p
        \; dx \\
  & \asymp \int_{\R^d}
               |\chi_{\delta} (x)|^p \;
               \bigg( \sum_{k = 1}^K |c_k|^2 \bigg)^{p/2}
             \; dx \\
  & \asymp \delta^{d(p - 1)} \| c \|_{\ell^2}^p ,
    \numberthis \label{eq:Khintchine}
\end{align*}
with implied constants only depending on $p, d, \chi$.

We next apply \Cref{prop:normestimate1} and \Cref{prop:normestimate2} to $f_{\theta, c}$.
First, since
\[
  \supp \widehat{f_{\theta, c}}
  \subseteq \bigcup_{k=1}^K \CalB (\xi_{k}, \delta)
  \subseteq P^{B^*}_{j_0}
  ,
\]
an application of \Cref{prop:normestimate1} gives
\[
  \| f_{\theta, c} \|_{\TLzero(B)} \asymp \| f_{\theta, c} \|_{L^p}.
\]
On the other hand, an application of \Cref{prop:normestimate2} yields that
\[
  \| f_{\theta, c} \|_{\TLzero(A)} \asymp \delta^{d(1-1/p)} \| c \|_{\ell^q}.
\]
Since $\| f \|_{\TLzero(A)} \asymp \| f \|_{\TLzero(B)}$ by \Cref{lem:NormEquivalence},
a combination of these estimates yields that
$\delta^{d(1-1/p)} \| c \|_{\ell^q} \asymp \| f_{\theta, c} \|_{L^p}$ and hence
\[
  \delta^{d (p - 1)} \, \| c \|_{\ell^q}^p \asymp \| f_{\theta,c} \|_{L^p}^p
  .
\]
Combining this in turn with \Cref{eq:Khintchine} yields $\| c \|_{\ell^q}^p \asymp \| c \|_{\ell^2}^p$,
with implicit constants independent of $c$ and $K$.
Since $K \in \N$ and $c \in \mathbb{C}^K$ were chosen arbitrarily, this implies that $q = 2$.
\end{proof}

\subsection{The case \texorpdfstring{$\alpha = 0$}{α = 0} and \texorpdfstring{$q = 2$}{q = 2}}
\label{sec:NecessityQEqualTwo}

This final subsection treats the Triebel-Lizorkin spaces $\mathbf{F}^{0}_{p,2}(A)$ with $p \in (0,\infty)$.
By \Cref{prop:TL_hardy}, these spaces correspond to $\LHA = \mathbf{F}^{0}_{p,2}(A)$
for $p \in (0,1]$ and to $L^p = \mathbf{F}^{0}_{p,2}(A)$ for $p > 1$.
Hence, it remains to consider the case $p \in (0,1]$.

We start by introducing a family of functions that will be used
in the proof of \Cref{thm:casenzero2} below.
Let $A, B \in \mathrm{GL}(d, \R)$ be expansive matrices.
Fix $p \in (0,1]$ and let
\begin{align} \label{eq:sadmissible}
  s
  \geq \max
       \big\{
         \lfloor ( \tfrac{1}{p} - 1)  \zeta_-(A)^{-1}\rfloor, \,\,
         \lfloor( \tfrac{1}{p} - 1)  \zeta_-(B)^{-1}\rfloor
       \big\}.
\end{align}
We will consider the following conditions on a measurable function $f : \R^d \to \mathbb{C}$:
\begin{enumerate}
  \item[(f1)] \label{enu:BownikSpecialFunctionsSupportCondition}
              $\supp f \subseteq x_0 + B^{j_1} A^{j_2} \mathcal{B}(0,1)$
              for some $x_0 \in \R^d$ and $j_1 \in \mathbb{N}_0$ and $j_2 \in \Z$;

  \item[(f2)] \label{enu:BownikSpecialFunctionsLInftyBound}
              $\| f \|_{L^\infty} \leq |\det B|^{-{j_1}/p} |\det A|^{-{j_2}/p}$;

  \item[(f3)] \label{enu:BownikSpecialFunctionsVanishingMoments}
              $\int_{\R^d} f(x) x^{\sigma} \; dx = 0$ for all $\sigma \in \N_0^d$
              satisfying $|\sigma| \leq s$.
\end{enumerate}

An essential property of functions satisfying
\ref{enu:BownikSpecialFunctionsSupportCondition}-\ref{enu:BownikSpecialFunctionsVanishingMoments}
is given by the following lemma.
Its proof is more refined than corresponding results for (nonlocal) anisotropic Hardy spaces
(see, e.g., the proof of \cite[Chapter 1, Theorem 10.5]{bownik2003anisotropic})
due to the fact that dilations $D_A^p$ do generally not act isometrically on local Hardy spaces $\LHA$.
In addition, we need to consider $j_1 \geq 0$ in condition \ref{enu:BownikSpecialFunctionsSupportCondition}.

\begin{lemma} \label{lem:test_function_hardy}
Suppose $\LHA = \LHB$ for some $p \in (0,1]$.
Then there exists a constant $C > 0$ such that $\| f \|_{\LHA}, \| f \|_{\LHB} \leq C$
for all functions $f$ satisfying conditions {\normalfont 
\ref{enu:BownikSpecialFunctionsSupportCondition}-\ref{enu:BownikSpecialFunctionsVanishingMoments}}.
\end{lemma}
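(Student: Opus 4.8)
The plan is to argue by contradiction using the closed graph theorem, combined with the atomic description of $\LHA$ from \Cref{sec:atomic}. First I would observe that a function $f$ satisfying \ref{enu:BownikSpecialFunctionsSupportCondition}--\ref{enu:BownikSpecialFunctionsVanishingMoments} is, up to a fixed multiplicative constant depending only on $p$, $A$ (and $B$), a \emph{local $(p,s)$-atom associated to $A$} in the sense of \Cref{sec:atomic} — or rather an alternative local atom in the sense of \Cref{rem:alternative_atom} — but with dilation matrix $M := B^{j_1} A^{j_2}$ rather than a power of $A$. The point is that $B^{j_1} A^{j_2} \CalB(0,1)$ is an ellipsoid of volume $|\det B|^{j_1} |\det A|^{j_2}$, so conditions \ref{enu:BownikSpecialFunctionsSupportCondition} and \ref{enu:BownikSpecialFunctionsLInftyBound} say precisely $\supp f \subseteq x_0 + M\CalB(0,1)$ and $\|f\|_{L^\infty} \le \Measure(M\CalB(0,1))^{-1/p}$; the moment condition \ref{enu:BownikSpecialFunctionsVanishingMoments} is the (unconditional) vanishing-moment condition \ref{enu:AtomVanishingMoments}. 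So $f$ is an honest $(p,s)$-atom for the ``matrix'' $M$ — but since $j_1 \geq 0$ is only constrained on one side, $M$ need not be a dilate of a fixed expansive matrix, and this is why one cannot simply quote a known atomic bound.

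The key reduction is therefore: reduce from the mixed ellipsoid $B^{j_1}A^{j_2}\CalB(0,1)$ to a genuine $A$-ellipsoid $A^{m}\Omega_A$ (and, symmetrically, a $B$-ellipsoid). Concretely, since $A^\ast,B^\ast$ — equivalently $A,B$ — need \emph{not} be coarsely equivalent here, I cannot control $B^{j_1}$ by a power of $A$. Instead I would split into the two regimes $j_2 \geq 0$ and $j_2 < 0$ and handle them separately. When $j_2 \geq 0$, the set $B^{j_1}A^{j_2}\CalB(0,1)$ has volume $\ge 1$, $f$ has no required vanishing moments beyond what it already has, and one expects $f$ to be a bounded multiple of a \emph{local} $(p,s)$-atom associated to $B$ up to enlarging the support by a dilate of $\Omega_B$ — giving $\|f\|_{\LHB}\lesssim 1$ directly from the atomic characterization via \cite[Proposition 2.2]{betancor2010anisotropic}, and then $\|f\|_{\LHA}\lesssim 1$ by the hypothesis $\LHA=\LHB$ together with the norm equivalence from \Cref{lem:NormEquivalence} (closed graph theorem). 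When $j_2 < 0$, the genuine vanishing moments in \ref{enu:BownikSpecialFunctionsVanishingMoments} are essential, and one must verify that $f$ decomposes as a bounded combination of local $(p,s)$-atoms for one of the two matrices; this is the step where the extra care compared to \cite{bownik2003anisotropic} is needed, since $D_A^p$ is not isometric on $\LHA$ and one cannot pull the $A^{j_2}$-dilation out for free.

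The main obstacle will be the bookkeeping in the regime where $j_1$ is large and $j_2$ is very negative: then $B^{j_1}A^{j_2}\CalB(0,1)$ can be a long thin ellipsoid that is not comparable to any single dilate $A^m\Omega_A$ or $B^m\Omega_B$, so $f$ is not literally (a multiple of) an atom for either matrix. I expect the resolution to be a covering/splitting argument: cover $x_0 + B^{j_1}A^{j_2}\CalB(0,1)$ by boundedly-overlapping translates of a suitable dilate of $\Omega_A$ (or $\Omega_B$), split $f$ accordingly using a partition of unity, correct each piece to have the required vanishing moments by subtracting a polynomial supported on a slightly larger set (a standard but technical ``atom correction'' step), and control the total $\ell^p$-norm of the resulting atomic coefficients by the volume normalization in \ref{enu:BownikSpecialFunctionsLInftyBound}. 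Once $\|f\|_{\LHA}\lesssim 1$ is established (say), the bound $\|f\|_{\LHB}\lesssim 1$ is immediate from \Cref{lem:NormEquivalence}. I would present the $j_2\ge 0$ case first as a warm-up, then do the $j_2<0$ case, and finally invoke symmetry in $A\leftrightarrow B$ only where the hypotheses genuinely permit it.
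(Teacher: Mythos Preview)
Your proposal misses the key idea and, as it stands, has genuine gaps in both regimes.

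In the case $j_2 \geq 0$ you claim that $f$ is a bounded multiple of a local $(p,s)$-atom for $B$. This is not uniform in $j_2$: the smallest $B$-ball $x_0 + B^m \Omega_B$ containing $x_0 + B^{j_1} A^{j_2} \CalB(0,1)$ can have volume $|\det B|^m$ much larger than $|\det B|^{j_1} |\det A|^{j_2}$ (since $A^{j_2}\CalB(0,1)$ may be highly eccentric relative to $B$), so the $L^\infty$-normalization required for a $B$-atom at scale $m$ is off by a factor that grows with $j_2$. In the case $j_2 < 0$ you propose a covering/moment-correction decomposition of $f$ into $A$- or $B$-atoms; you correctly identify this as the main obstacle, but you do not indicate how to control the $\ell^p$-norm of the resulting coefficients uniformly in $(j_1,j_2)$, and for a long thin ellipsoid $B^{j_1}A^{j_2}\CalB(0,1)$ this is far from routine.

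The paper's argument avoids all of this with a single dilation trick that you overlook: apply $D^p_{B^{j_1}}$ to $f$. Then $D^p_{B^{j_1}} f$ has support in $B^{-j_1} x_0 + A^{j_2}\CalB(0,1)$, satisfies $\|D^p_{B^{j_1}} f\|_{L^\infty} \leq |\det A|^{-j_2/p}$, and retains the vanishing moments \ref{enu:BownikSpecialFunctionsVanishingMoments}. Hence $D^p_{B^{j_1}} f$ is, up to a fixed constant, an honest (nonlocal) $(p,s)$-atom associated to $A$, and \cite[Chapter 1, Theorem 4.2]{bownik2003anisotropic} gives $\|D^p_{B^{j_1}} f\|_{H^p(A)} \lesssim 1$ uniformly. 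The remaining point---and this is precisely where the non-isometry issue you mention enters---is the one-sided monotonicity
\[
  \| h \|_{\LHB} \leq \| D^p_B h \|_{\LHB},
\]
which follows from a direct computation with the local radial maximal function (the supremum over $j \geq 0$ in $M^{0,\loc}_{\phi,B}$ only gets larger when one shifts the index). Since $j_1 \geq 0$, iterating gives $\|f\|_{\LHB} \leq \|D^p_{B^{j_1}} f\|_{\LHB}$. Chaining this with $\|\cdot\|_{\LHB} \asymp \|\cdot\|_{\LHA}$ (from \Cref{lem:NormEquivalence} and \Cref{prop:TL_hardy}) and $H^p(A) \hookrightarrow \LHA$ yields
\[
  \|f\|_{\LHB}
  \leq \|D^p_{B^{j_1}} f\|_{\LHB}
  \lesssim \|D^p_{B^{j_1}} f\|_{\LHA}
  \lesssim \|D^p_{B^{j_1}} f\|_{H^p(A)}
  \lesssim 1,
\]
and then $\|f\|_{\LHA} \lesssim 1$ follows from the norm equivalence. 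The crucial move is to pull out $B^{j_1}$ rather than $A^{j_2}$: this turns the mixed ellipsoid into a pure $A$-ellipsoid, and the constraint $j_1 \geq 0$ is exactly what makes the one-sided monotonicity go the right way.
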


\begin{proof}
Recall that since $\LHA = \LHB$, it follows that $\| \cdot \|_{\LHA} \asymp \| \cdot \|_{\LHB}$
by a combination of \Cref{prop:TL_hardy} and \Cref{lem:NormEquivalence}.

Let $f$ satisfy \ref{enu:BownikSpecialFunctionsSupportCondition}-\ref{enu:BownikSpecialFunctionsVanishingMoments}.
Then the support of $D^{p}_{B^{j_1}} f$ is  $B^{-j_1} \supp f \subseteq B^{-j_1} x_0 + A^{j_2} \mathcal{B}(0,1)$.
Moreover, $D^{p}_{B^{j_1}} f$ satisfies the norm estimate
\[
  \| D^p_{B^{j_1}} f \|_{L^\infty}
  = |\det B|^{j_1 /p} \| f \|_{L^\infty}
  \leq |\det A|^{-j_2/p}.
\]
Finally, $\int_{\R^d} D^p_{B^{j_1}} f(x) x^{\sigma} \; dx = 0$ for all $|\sigma| \leq s$.
Thus, by \Cref{rem:alternative_atom}, the function $D^p_{B^{j_1}} f$ is (a constant multiple of)
a $(p,s)$-atom associated to $A$.
Therefore, by \mbox{\cite[Chapter 1, Theorem 4.2]{bownik2003anisotropic}},
it follows that $\| D^p_{B^{j_1}} f \|_{H^p (A)} \lesssim 1$,
with a constant independent of $j_1$ and $f$.

In view of the above and the assumption $\| \cdot \|_{\LHA} \asymp \| \cdot \|_{\LHB}$,
it remains to prove the estimate $\| f \|_{\LHB} \lesssim \| D^p_{B^{j_1}} f \|_{H^p (A)}$.
For this, note first that, for any measurable function $h : \R^d \to \mathbb{C}$ and any $x \in \R^d$,
\begin{align*}
  M^{0, \loc}_{\phi,B} [D^p_B h] (x)
  & = \sup_{j \in \mathbb{N}_0}
        |\det B|^j | ((D^p_B h) \ast (\phi \circ B^j) ) (x) | \\
  & = \sup_{j \in \mathbb{N}_0}
        |\det B|^{1/p}
        |\det B|^{j-1}
        |(h \ast (\phi \circ B^{j-1}) )(Bx)| \\
  & \geq |\det B|^{1/p}
          \sup_{\ell \in \mathbb{N}_0}
            |\det B|^{\ell}
            |(h \ast (\phi \circ B^{\ell}) )(Bx)| \\
  & =   |\det B|^{1/p} (M^{0, \loc}_{\phi, B} h)(Bx)
  .
\end{align*}
Hence,
\[
  \| h \|_{\LHB}
  = \| M_{\phi,B}^{0,\loc} h \|_{L^p}
  = |\det B|^{1/p} \, \| (M_{\phi,B}^{0,\loc} h) (B \cdot) \|_{L^p}
  \leq \| M_{\phi,B}^{0,\loc} [D^p_B h] \|_{L^p}
  = \| D^p_B h \|_{\LHB}
  ,
\]
which implies, in particular, that $\| f \|_{\LHB} \leq \| D^p_{B^{j_1}} f\|_{\LHB}$ since $j_1 \geq 0$.
Second, by definition, it holds that $H^p (A) \hookrightarrow \LHA$.
All in all, this gives
\[
  \| f \|_{\LHB}
  \leq \| D^p_{B^{j_1}} f \|_{\LHB}
  \lesssim \| D^p_{B^{j_1}} f \|_{\LHA}
  \lesssim \| D^p_{B^{j_1}} f \|_{H^p (A)} 
  \lesssim 1
  ,
\]
where the second inequality follows from $\| \cdot \|_{\LHA} \asymp \| \cdot \|_{\LHB}$.
\end{proof}

The following theorem provides the desired necessary condition for the equality
of aniso\-tropic local Hardy spaces associated to different expansive matrices $A,B$.
Its proof structure is analogous to the classification of anisotropic (nonlocal)
Hardy spaces in \cite{bownik2003anisotropic}, with various essential modifications; see also \Cref{rem:example}.

\begin{theorem} \label{thm:casenzero2}
If $\LHA = \LHB$ for some $p \in (0,1]$,
  then $A^*$ and $B^*$ are coarsely equivalent.
\end{theorem}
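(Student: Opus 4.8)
The strategy is to test the equality $\LHA = \LHB$ against a cleverly chosen family of functions built from the local atoms of one space whose anisotropic Hardy norm in the \emph{other} space we can compute from below. Concretely, suppose for contradiction that $A^\ast$ and $B^\ast$ are \emph{not} coarsely equivalent. By \Cref{lem:coarse_norm} applied to $A^\ast, B^\ast$, this means $\sup_{k \in \N} \| (A^\ast)^{-k} (B^\ast)^{\lfloor \varepsilon' k \rfloor} \| = \infty$ where $\varepsilon' = \varepsilon(A^\ast, B^\ast) = \ln|\det A|/\ln|\det B| = \varepsilon$ (determinants are unchanged under transpose); equivalently, passing to the non-adjoint matrices, one of $\sup_k \| A^{-k} B^{\lfloor \varepsilon k\rfloor}\|$ or $\sup_k \| B^{-k} A^{\lfloor k/\varepsilon \rfloor}\|$ is infinite — say the first. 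I would translate this into a statement about how anisotropically distorted the unit ball $\CalB(0,1)$ becomes under $B^{\lfloor \varepsilon k \rfloor} A^{-k}$: there is a sequence $k_n \to \infty$ along which the set $A^{-k_n} B^{\lfloor \varepsilon k_n \rfloor}\CalB(0,1)$ has diameter tending to infinity while still having volume comparable to a fixed constant (determinants balance by the choice of $\varepsilon$).

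\textbf{Key steps, in order.} (1) Fix $p \in (0,1]$ and $s$ as in \eqref{eq:sadmissible}, and recall from \Cref{prop:TL_hardy} and \Cref{lem:NormEquivalence} that $\LHA = \LHB$ forces $\|\cdot\|_{\LHA} \asymp \|\cdot\|_{\LHB}$. (2) Construct, for each large $k$, a function $g_k$ that is (a constant multiple of) a local $(p,s)$-atom for $A$ supported in $A^{-k}\Omega_A$ — so that $\|g_k\|_{\LHA} \lesssim 1$ uniformly by \cite[Chapter 1, Theorem 4.2]{bownik2003anisotropic} and the definition of $\LHAs$ — but which, when viewed through the $B$-dilation structure, looks like a function supported in a set of the form $x_0 + B^{j_1} A^{j_2}\CalB(0,1)$ with $j_1 \geq 0$. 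Here \Cref{lem:test_function_hardy} is the crucial device: it tells us precisely that any $f$ satisfying \ref{enu:BownikSpecialFunctionsSupportCondition}--\ref{enu:BownikSpecialFunctionsVanishingMoments} has $\|f\|_{\LHB} \lesssim 1$ uniformly. (3) The contradiction must therefore come from a \emph{lower} bound: exhibit a function $f_k$ that simultaneously satisfies \ref{enu:BownikSpecialFunctionsSupportCondition}--\ref{enu:BownikSpecialFunctionsVanishingMoments} (so $\|f_k\|_{\LHA} \lesssim 1$ via \Cref{lem:test_function_hardy} used with the roles of $A,B$ swapped, or directly) and yet has $\|f_k\|_{\LHB} \to \infty$. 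I would build $f_k$ by taking a fixed smooth bump $\omega$ with enough vanishing moments, spreading out many translated copies $\omega(\cdot - m)$ indexed by lattice points $m$ inside the elongated set $A^{-k} B^{\lfloor \varepsilon k \rfloor}\CalB(0,1)$, pulling back by the $B$-dilation $D^p_{B^{\lfloor \varepsilon k\rfloor}}$ and using the degrees of freedom (number of disjoint copies $\sim$ volume$\,\times\,$(anisotropic stretching)) to force the local radial maximal function in the $B$-geometry to be large on a set of controlled measure. The number of such copies diverges precisely because $\mathrm{diam}(A^{-k} B^{\lfloor\varepsilon k\rfloor}\CalB(0,1)) \to \infty$.

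\textbf{Main obstacle.} The delicate point — and the one the authors flag as the extra detail missing in \cite{bownik2003anisotropic} for $p = 1$ (see \Cref{rem:example}) — is obtaining the genuine \emph{lower} bound on $\|f_k\|_{\LHB}$ and in ensuring the construction of $f_k$ respects condition \ref{enu:BownikSpecialFunctionsSupportCondition}, i.e.\ that the support really fits inside a single set of the form $x_0 + B^{j_1} A^{j_2}\CalB(0,1)$ with $j_1 \in \N_0$ (the restriction $j_1 \geq 0$, not merely $j_1 \in \Z$, is new here and reflects the \emph{local} — inhomogeneous — nature of the space; the low-frequency piece $\Phi$ cannot be rescaled freely). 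One must balance three competing scales: the $B$-dilation exponent $\lfloor \varepsilon k\rfloor$, the $A$-dilation exponent $-k$, and the Euclidean size of the bump lattice; the determinant identity $|\det B|^{\lfloor \varepsilon k\rfloor} \asymp |\det A|^k$ is what makes \ref{enu:BownikSpecialFunctionsLInftyBound} and the $\ell^p$-normalization consistent, while the operator-norm blow-up is what makes the maximal-function lower bound work. I expect the lower estimate for $\|f_k\|_{\LHB}$ to require testing the $B$-local maximal operator at scale $j=0$ (or a fixed small scale) against each bump separately and summing the $p$-th powers over the $\gtrsim \mathrm{diam}(\cdots)^{\kappa}$ disjoint copies for a suitable $\kappa > 0$ depending on $d$ and the anisotropy exponents $\zeta_\pm$, which then diverges while the upper bound from \Cref{lem:test_function_hardy} stays $O(1)$ — the contradiction.
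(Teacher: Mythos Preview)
Your overall strategy --- contradict coarse equivalence by producing test functions with uniformly bounded norm on one side and diverging norm on the other, using \Cref{lem:test_function_hardy} for the upper bound --- matches the paper's. However, your proposed construction differs from the paper's and, as written, has a genuine gap.

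The paper does \emph{not} use many translated bump copies. Instead it introduces an extra dilation parameter: writing $Q_k := B^{\lfloor \varepsilon k \rfloor} A^{-k - d(k)}$ with $d(k) \in \Z$ minimal so that $\|Q_k\| \leq 1$, one has $d(k) \to \infty$ (precisely because the undilated operator norm blows up) while $\|A\|^{-1} \leq \|Q_k\| \leq 1$. One then sets $f_k := D^p_{Q_k^{-1}} D^p_{U_k^{-1}} f_0$ for a \emph{single} fixed function $f_0$ (supplied by \Cref{lem:SpecialFunctionExists}) and a suitable orthogonal matrix $U_k$. For $p<1$ the lower bound on $\|f_k\|_{\LHB}$ comes from the determinant mismatch $|\det Q_k| \asymp |\det A|^{-d(k)} \to 0$: a direct estimate of $M^{0,\loc}_{\phi,B} f_k$ at scale $j=0$ on a ball of fixed radius gives $\|f_k\|_{\LHB}^p \gtrsim |\det A|^{d(k)(1-p)} \to \infty$.

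Your construction instead places integer-translate bumps $\omega(\cdot - m)$ inside the set $A^{-k} B^{\lfloor \varepsilon k\rfloor}\CalB(0,1)$. But this set has Lebesgue measure $\asymp 1$ (the determinants cancel by the very definition of $\varepsilon$), so it contains only $O(1)$ integer lattice points regardless of its diameter; the elongation in one direction is exactly compensated by thinness in the transverse directions. Your claim that the number of disjoint unit-scale copies diverges because the diameter does is therefore false. Without the extra $A^{-d(k)}$ factor --- or some equivalent device that decouples volume from operator norm --- you have no mechanism producing divergence.

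Separately, for $p=1$ the exponent $1/p-1$ vanishes and any argument based purely on size and support counts collapses. The paper handles this by passing to a subsequence along which $Q_k \to Q$ with $\det Q = 0$, showing $f_k \to \mu$ in $\Schwartz'(\R^d)$ for a nonzero finite Borel measure $\mu$ supported in the proper subspace $\mathrm{range}(QU)$ (hence singular with respect to Lebesgue measure), and then using Fatou's lemma on $M^{0,\loc}_{\phi,B}$ together with \Cref{lem:test_function_hardy} to force $\mu \in \LHBo \subseteq L^1$ --- a contradiction. Your proposal acknowledges that $p=1$ is delicate but offers no mechanism; the ``sum over many copies'' idea, even if it could be repaired for $p<1$, gives nothing here.
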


\begin{proof}
Arguing by contradiction, assume that $A^*$ and $B^*$ are not coarsely equivalent.
Then, by \Cref{lem:coarse_norm}, it follows for
\(
  \eps
  = \ln |\det A^\ast| / \ln |\det B^\ast|
  = \ln |\det A| / \ln |\det B|
\)
that
\[
  \sup_{k \in \N}
    \| B^{\lfloor \varepsilon k \rfloor} A^{-k} \|
  = \sup_{k \in \N}
      \| (A^\ast)^{-k} (B^\ast)^{\lfloor \varepsilon k \rfloor} \|
  = \infty.
\]
Hence, by passing to a subsequence if necessary, it may be assumed that
\[
  \lim_{k \to \infty} \| B^{\lfloor \varepsilon k \rfloor} A^{-k} \| = \infty.
\]
Let $d(k) \in \Z$  be minimal with the property that
$\| B^{\lfloor \eps k\rfloor} A^{-k - d(k)} \| \leq 1$.
Then, as in \cite[Chapter 1, Theorem 10.5]{bownik2003anisotropic},
it follows that
\(
  1
  < \| B^{\lfloor \eps k \rfloor} A^{-k - (d(k) - 1)} \|
  \leq \| B^{\lfloor \eps k \rfloor} A^{-k - d(k)} \| \cdot \| A \|
  ,
\)
and hence
\begin{equation}
  1
  \geq c(k)
  := \| B^{\lfloor \eps k\rfloor} A^{-k - d(k)} \|
  \geq \| A \|^{-1}.
  \label{eq:COfKDefinition}
\end{equation}
Moreover, we have $d(k) \to \infty$ as $k \to \infty$, which follows by recalling that
$\| B^{\lfloor \eps k\rfloor} A^{-k - d(k)} \| \leq 1$, and hence
\[
  \| A \|^{d(k)}
  \geq \| A^{d(k)} \|
  \geq \| B^{\lfloor \eps k\rfloor} A^{-k - d(k)} \| \cdot \| A^{d(k)} \|
  \geq \| B^{\lfloor \eps k\rfloor} A^{-k} \|
  \to \infty
\]
as $k \to \infty$.

In order to simplify notation, denote
\[
  Q_k := B^{\lfloor \eps k\rfloor} A^{-k - d(k)}
\]
and let $z_k \in \R^d$ be such that
\[
  |z_k|=1
  \qquad \text{and} \qquad
  |Q_k z_k| = \|Q_k\| = c(k) 
  .
\]
In addition, let $U_k \in \R^{d\times d}$ be an orthogonal matrix satisfying $U_k e_1 = z_k$,
where $e_1$ denotes the first element of the canonical basis for $\R^d$.
Using the matrices $Q_k$ and $U_k$ for $k \in \N$, we define the sequence of functions
\[
  f_k := D^p_{Q_k^{-1}} \, D^p_{U_k^{-1}} \, f_0,
\]
where $f_0 : \R^d \to \mathbb{C}$ is a bounded measurable function satisfying
\begin{align} \label{eq:functionf0}
  f_0 (x) =
  \begin{cases}
    \delta_0 > 0, & \text{if } x \in \mathcal{B}(\frac{3}{4} e_1, \frac{1}{4}) \\
    0,            & \text{if } x \notin \mathcal{B}(0, \frac{1}{2}) \cup \mathcal{B}(\frac{3}{4} e_1, \frac{1}{4})
  \end{cases}
\end{align}
and such that conditions
\ref{enu:BownikSpecialFunctionsSupportCondition}--\ref{enu:BownikSpecialFunctionsVanishingMoments}
 hold with $x_0 = 0$ and $j_1 = j_2 = 0$.
The existence of such a function is guaranteed by \Cref{lem:SpecialFunctionExists}.
It is then not hard to see that also each function $f_k$, $k \in \N$,
satisfies conditions \ref{enu:BownikSpecialFunctionsSupportCondition}--\ref{enu:BownikSpecialFunctionsVanishingMoments}
with $x_0 = 0$, $j_1 = \lfloor \eps k \rfloor$ and $j_2 = - k - d(k)$.

The remainder of the proof is split into two steps,
which consider the cases $p < 1$ and $p = 1$ separately.
\\~\\
\textbf{Step 1.} \emph{(Case $p < 1$)}.
In this step, we show that $\|f_k\|_{\LHB} \to \infty$ as $k \to \infty$.
Since $\| f_k \|_{\LHB} \lesssim 1$ by \Cref{lem:test_function_hardy},
this will provide the desired contradiction.

Since $Q_k U_k \mathcal{B}(0, \frac{1}{2}) \subseteq \mathcal{B}(0, \frac{c(k)}{2})$ and
\(
  Q_k U_k \mathcal{B}(\frac{3}{4} e_1, \frac{1}{4})
  = Q_k \mathcal{B}(\frac{3}{4} z_k, \frac{1}{4})
  \subseteq \CalB (\frac{3}{4} Q_k z_k, \frac{1}{4}),
\)
it follows by the definition of $f_k$ and \eqref{eq:functionf0}
that if $f_k (x) \neq 0$ for $x \in \R^d \setminus \mathcal{B}(0, \frac{c(k)}{2})$, then
\begin{align} \label{eq:valuefk}
  f_k (x)
  = \delta_0  |\det B|^{-\frac{\lfloor \varepsilon k \rfloor}{p}} |\det A|^{\frac{k + d(k)}{p}}
  =: \delta_k,
  \qquad \text{and} \qquad
  x \in Q_k \mathcal{B}(\tfrac{3}{4} z_k , \tfrac{1}{4}).
\end{align}
Let $\phi \in \mathcal{S}(\R^d)$ be a fixed nonnegative Schwartz function satisfying
$\phi \equiv 1$ on $\mathcal{B}(0, \frac{1}{8} \| A \|^{-1})$
and $\phi \equiv 0$ outside of $\mathcal{B}(0, \frac{3}{16} \|A\|^{-1})$.
Then, for $z \in \R^d$,
\begin{align} \label{eq:maximal_lower}
   M^{0, \loc}_{\phi, B} f_k (z)
   \geq | f_k \ast \phi (z)|
   = \bigg| \int_{\R^d} f_k (x) \phi (z - x) \; dx \bigg|.
 \end{align}
Fix
\(
  z
  \in \CalB(\frac{3}{4} Q_k z_k, \frac{c(k)}{16} \|A\|^{-1})
  \subset \CalB(\frac{3}{4} Q_k z_k, \frac{1}{16} \|A\|^{-1})
\)
for the moment.
Then $\phi(z-x) \neq 0$ implies that
\[
  x
  = -(z - x) + z
  \in \mathcal{B}(0, \tfrac{3}{16} \|A\|^{-1}) + \mathcal{B}(\tfrac{3}{4} Q_k z_k, \tfrac{1}{16} \|A\|^{-1})
  \subseteq \mathcal{B}(\tfrac{3}{4} Q_k z_k, \tfrac{1}{4} \|A\|^{-1})
  ,
\]
so that \Cref{eq:COfKDefinition} implies
\[
  | x |
  \geq \frac{3}{4} | Q_k z_k | - \frac{1}{4} \| A \|^{-1}
  \geq \frac{3}{4} c(k) - \frac{1}{4} c(k)
  = \frac{c(k)}{2}
  ,
\]
and hence $x \in \R^d \setminus \mathcal{B}(0, \frac{c(k)}{2})$.
Using \Cref{eq:valuefk}, it follows therefore that
\begin{align*}
   M^{0, \loc}_{\phi,B} f_k (z)
   & \geq \delta_k
          \int_{\R^d}
            \mathds{1}_{Q_k \mathcal{B}(3/4 z_k , 1/4)} (x)
            \phi(z-x)
          \; dx \\
   & \geq \delta_k
           \Measure \bigl(
                           \mathcal{B}(z, \tfrac{1}{8} \|A\|^{-1})
                           \cap Q_k \mathcal{B}(\tfrac{3}{4} z_k, \tfrac{1}{4})
                         \bigr).
\end{align*}
Now, an application of \cite[Chapter 1, Lemma 10.6]{bownik2003anisotropic}
(with $r = \frac{1}{2} \| A \|^{-1} \leq 1/2$ and $P = \frac{1}{4} Q_k$) yields
because of $\| P \|  r = \frac{\frac{1}{4} \| Q_k \|}{2 \| A \|} \leq \frac{1}{8} \| A \|^{-1}$
and because of $z - \tfrac{3}{4} Q_k z_k \in \CalB(0, \| P \|  \frac{r}{2})$ that
\begin{align*}
  \Measure \bigl(
             \mathcal{B}(z, \tfrac{1}{8} \|A\|^{-1})
             \cap Q_k \mathcal{B}(\tfrac{3}{4} z_k, \tfrac{1}{4})
           \bigr)
  & = \Measure \bigl(
                 \mathcal{B}(z - \tfrac{3}{4} Q_k z_k, \tfrac{1}{8} \|A\|^{-1})
                 \cap \tfrac{1}{4} Q_k \mathcal{B}(0, 1)
               \bigr) \\
  & \geq \Measure \bigl(
                    \mathcal{B}(z - \tfrac{3}{4} Q_k z_k, \| P \| \cdot r)
                    \cap P \mathcal{B}(0, 1)
                  \bigr) \\
  & \geq \left(\frac{r}{2}\right)^d \cdot \Measure \bigl(P \CalB (0,1)\bigr) \\
  & = \left(16 \| A \|\right)^{-d} \cdot |\det Q_k| \cdot \Measure \bigl(\CalB (0,1)\bigr)
  ,
\end{align*}
so that
\begin{align*}
  \delta_k
  \cdot \Measure \bigl(
                   \mathcal{B}(z, \tfrac{1}{8} \|A\|^{-1})
                   \cap Q_k \mathcal{B}(\tfrac{3}{4} z_k, \tfrac{1}{4})
                 \bigr)
  & \geq |\det Q_k|
         \cdot \Measure(\mathcal{B}(0, 1))
          \cdot \delta_k
          \cdot (16 \| A\|)^{-d}.
\end{align*}
Since
\begin{align*}
  |\det Q_k| \cdot \delta_k
  & = \delta_0
      \cdot |\det B|^{\lfloor \eps k \rfloor}
      \cdot |\det A|^{-k -d(k)}
      \cdot |\det B|^{- \frac{\lfloor \eps k \rfloor}{p}}
      \cdot |\det A|^{\frac{k + d(k)}{p}} \\
  & \geq \delta_0 \cdot |\det B|^{\eps k (1 - \frac{1}{p})} \cdot |\det A|^{(k + d(k))(\frac{1}{p} - 1)} \\
  & \gtrsim |\det A|^{k (1 - \frac{1}{p})} |\det A|^{(k + d(k))(\frac{1}{p} - 1)} \\
  & = |\det A|^{d(k)(\frac{1}{p} - 1)} ,
\end{align*}
by definition of $\delta_k$ in \Cref{eq:valuefk} and because $\eps = \ln |\det A| / \ln |\det B|$,
a combination of the above inequalities gives
\[
 \delta_k
  \cdot \Measure \bigl(
                   \mathcal{B}(z, \tfrac{1}{8} \|A\|^{-1})
                   \cap Q_k \mathcal{B}(\tfrac{3}{4} z_k, \tfrac{1}{4})
                 \bigr)
    \gtrsim |\det A|^{d(k) (1/p - 1)}.
\]

Recall that $z \in \CalB (\frac{3}{4} Q_k z_k, \frac{c(k)}{16} \| A \|^{-1})$ was arbitrary.
Thus, combining the estimates obtained above and recalling from \Cref{eq:COfKDefinition}
that $c(k) \geq \| A \|^{-1}$ gives
\begin{align*}
  \| f_k \|_{\LHB}^p
 = \int_{\R^d} \big( M_{\phi, B}^{0, \loc} f_k (z) \big)^p \; dz
 & \geq \int_{\mathcal{B}(\frac{3}{4} Q_k z_k, \frac{c(k)}{16} \|A\|^{-1})}
          \big( M_{\phi, B}^{0, \loc} f_k (z) \big)^p
        \; dz \\
 & \gtrsim |\det A|^{d(k)(1-p)},
\end{align*}
which shows that $\| f_k \|_{\LHB} \to \infty$ as $k \to \infty$,
since $d(k) \to \infty$ and $p < 1$, as well as $|\det A| > 1$.
As noted at the beginning of this step, this completes the proof for the case $p < 1$.
\\~\\
\textbf{Step 2.} \emph{(Case $p = 1$)}.
Since $\| A \|^{-1} \leq c(k) = \| Q_k \| \leq 1$ and $|z_k| = 1$,
by passing to a subsequence if necessary, we can assume that $Q_k \to Q$, as well as
$U_k \to U$ and $z_k \to z^\ast$ for a matrix $Q \in \R^{d \times d}$ satisfying
$\| A \|^{-1} \leq \| Q \| \leq 1$, a vector $z^\ast \in \R^d$ satisfying $|z^\ast| = 1$,
and an orthogonal matrix $U \in \R^{d \times d}$.
Note because of $\eps = \ln |\det A| / \ln |\det B|$ and $d(k) \to \infty$ that
\begin{align*}
  |\det Q_k|
  & = |\det B|^{\lfloor \eps k \rfloor} |\det A|^{-k - d(k)}
    \leq |\det B|^{\eps k} |\det A|^{-k - d(k)} \\
  & = |\det A|^{k} |\det A|^{-k - d(k)}
    = |\det A|^{- d(k)}
    \to 0,
\end{align*}
so that $|\det Q| = 0$, meaning that $Q$ is \emph{not} invertible.

Next, for an arbitrary bounded, continuous function $g \in C_b (\R^d)$, we have
\begin{align*}
  \int_{\R^d}
    f_k (x) g(x)
  \, dx
  & = \int_{\R^d}
        |\det Q_k^{-1}|
        \cdot (D_{U_k^{-1}}^1 f_0) (Q_k^{-1} x)
        \cdot g(Q_k Q_k^{-1} x)
      \, dx \\
  & = \int_{\R^d}
        (D_{U_k^{-1}}^1 f_0) (y)
        g(Q_k y)
      \, dy \\
  & = \int_{\R^d}
        |\det U_k^{-1}|
        f_0 (U_k^{-1} y)
        g(Q_k U_k U_k^{-1} y)
      \, dy \\
  & = \int_{\R^d}
        f_0 (z)
        g(Q_k U_k z)
      \, dz \\
  & \to \int_{\R^d}
          f_0 (z)
          g(Q U z)
        \, d z
     =: \int_{\R^d} g(x) \, d \mu (x)
     ,
\end{align*}
for a uniquely determined regular, real-valued (finite) Borel measure $\mu$ on $\R^d$.
The convergence above follows from the dominated convergence theorem, since
$f_0$ and $g$ are bounded, with $f_0$ of compact support, and since
$g(Q_k U_k z) \to g(Q U z)$ by continuity of $g$.
Note that $\supp \mu \subseteq \mathrm{range} (Q U)$, which is a proper subspace of $\R^d$,
since $Q \in \R^{d \times d}$ is not invertible and thus not surjective.
Hence, $\mu$ is mutually singular with respect to the Lebesgue measure.
Note furthermore that the above implies $f_k \to \mu$ in the sense of tempered distributions.

To show that $\mu \neq 0$, choose $0 < c < \frac{1}{4} \| A \|^{-1}$, and note
\[
  | Q U e_1 |
  = \lim_k | Q_k U_k e_1 |
  = \lim_{k} | Q_k z_k |
  = \lim_k \| Q_k \|
  = \| Q \|
  ,
\]
which implies for any $z \in \CalB (0, \frac{1}{2})$ that
\begin{align*}
  | Q U z - \tfrac{3}{4} Q U e_1 |
  & \geq \tfrac{3}{4} | Q U e_1 | - | Q U z |
    \geq \tfrac{3}{4} \| Q \| - \| Q \| \cdot |U z| \\
  & \geq \tfrac{3}{4} \| Q \| - \tfrac{1}{2} \| Q \|
    =    \tfrac{1}{4} \| Q \|
    \geq \tfrac{1}{4} \| A \|^{-1}
    > c
  .
\end{align*}
Choose a nonnegative, continuous function $g \in C(\R^d)$ satisfying
$\supp g \subseteq \CalB (\frac{3}{4} Q U e_1, c)$ and $g(\frac{3}{4} Q U e_1) = 1$.
By what we just showed, we then have $g(Q U z) = 0$ for all $z \in \CalB (0, \frac{1}{2})$.
By the properties of $f_0$ (see \Cref{eq:functionf0}), we then see
\begin{align*}
  \int_{\R^d} g(x) \, d \mu (x)
  & = \int_{\R^d}
        f_0 (z)
        g(Q U z)
      \, d z \\
  & = \int_{\CalB (0, \frac{1}{2})} f_0 (z) g(Q U z) \, dz
      + \delta_0
        \int_{\CalB(0, \frac{1}{4})}
          g(Q U (\tfrac{3}{4} e_1 + z))
        \, dz \\
  & = \delta_0
      \int_{\CalB(0, \frac{1}{4})}
        g(Q U (\tfrac{3}{4} e_1 + z))
      \, dz
    > 0
     ,
\end{align*}
since the domain of integration is open and the integrand is continuous, nonnegative,
and strictly positive at $z = 0$.

We will now show that the tempered distribution $\mu$ satisfies
$\mu \in h^1 (B) \subseteq L^1$, which will yield the desired contradiction.
For this, fix a nonnegative, nonzero Schwartz function $\phi$.
Then an application of Fatou's lemma yields
\begin{align*}
  \| \mu \|_{\LHBo}
  & \asymp \int_{\R^d} M^{0,\loc}_{\phi,B} \mu (x) \; dx
  \leq  \liminf_{k \to \infty} \int_{\R^d} M_{\phi,B}^{0,\loc} f_{k} (x) \; dx
  \asymp \liminf_{k \to \infty} \| f_{k} \|_{\LHBo}.
\end{align*}
Since $\| f_k \|_{\LHB} \lesssim 1$ for all $k \in \mathbb{N}$ by \Cref{lem:test_function_hardy},
this shows that $\mu \in \LHBo \subseteq L^1$, which is a contradiction,
since $\mu \neq 0$ is mutually singular with respect to the Lebesgue measure.
\end{proof}

\begin{remark}\label{rem:example}
  While being based on the same general ideas, the proof for the case $p = 1$ above
  adds a significant detail that was missing in the proof
  of \cite[Chapter 1, Theorem 10.5]{bownik2003anisotropic}.
  The reason is that one of the claims used in \cite{bownik2003anisotropic} appears not correct
  as stated:
  In \cite{bownik2003anisotropic}, it is effectively claimed that if $(f_n)_{n \in \N}$ is a sequence
  in $L^1$ with uniformly bounded supports
  that converges in the sense of tempered distributions to some real-valued Borel measure $\mu$,
  and such that $\Measure (\suppc f_n) \to 0$ as $n \to \infty$, then $\mu$ is
  mutually singular with respect to the Lebesgue measure.

  To see that this claim is not correct in general, let $f_n : \R \to [0, \infty)$, $n \in \N$,
  be defined by
  \[
    f_n (x)
    = \frac{1}{n}
      \sum_{i = 1}^n
        \frac{n^2}{2} \Indicator_{\frac{i}{n} + [-n^{-2}, n^{-2}]}.
  \]
  Then $\| f_n \|_{L^1} = 1$, and $\Lebesgue{\suppc f_n} \leq \frac{2}{n}$,
  so that $\Lebesgue{\suppc f_n} \to 0$ as $n \to \infty$.,
  However, it follows by standard arguments hat $f_n \to \Indicator_{[0,1]}$
  in the weak-$\ast$-topology of $M(\R) = (C_0 (\R))^*$,
  so that $\lim_n f_n \in L^1$ is \emph{not} singular with respect to the Lebesgue measure $\Measure$.
\end{remark}

\subsection{Proof of Theorem \ref{thm:necessary}}

Combining the results from the previous subsections, we can prove \Cref{thm:necessary}.

\begin{proof}[Proof of \Cref{thm:necessary}]
If $\TL(A) = \TL(B)$ for some $\alpha \neq 0$, then case (i) follows by \Cref{thm:casenonzero}.
If $\TLzero(A) = \TLzero(B)$ for some $p \in (0, \infty)$ and $q \neq 2$,
then case (i) follows from \Cref{thm:casenzero1}.
Lastly, if $\TLzero(A) = \TLzero(B)$ for $p \in (0, 1]$ and $q = 2$,
then case (i) follows from \Cref{thm:casenzero2}, combined with \Cref{prop:TL_hardy}.
In the remaining case, we have $\alpha = 0$, $q = 2$, and $p \in (1,\infty)$,
so that case (ii) of \Cref{thm:necessary} holds.
\end{proof}

\subsection{Proof of Theorem \ref{thm:main}}
\label{sub:MainTheoremProof}
 \Cref{thm:necessary} shows that (i) implies (iii), whereas \Cref{prop:sufficient} shows that (iii) implies (ii). The remaining implication is immediate.

\appendix

\section{Postponed proofs}

\begin{lemma}\label{lem:SpecialFunctionExists}
  Let $s,d \in \N$, and let $e_1 = (1,0,\dots,0) \in \R^d$ denote the first standard basis vector.
  There exists a bounded measurable function $f : \R^d \to \CC$ satisfying
  \begin{align*}
  f (x) =
  \begin{cases}
    1, & \text{if } x \in \mathcal{B}(\frac{3}{4} e_1, \frac{1}{4}) \\
    0,        & \text{if } x \notin \mathcal{B}(0, \frac{1}{2}) \cup \mathcal{B}(\frac{3}{4} e_1, \frac{1}{4})
  \end{cases}
\end{align*}
and $\int_{\R^d} f (x) x^{\sigma} \; dx = 0$ for all $\sigma \in \N_0^d$ with $|\sigma| \leq s$.
\end{lemma}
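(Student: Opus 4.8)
The plan is to write $f$ as the sum of the prescribed bump $\Indicator_{\mathcal{B}(3 e_1/4,\,1/4)}$ and a correction term supported in $\mathcal{B}(0,\tfrac12)$ that cancels the moments introduced by the bump. The first point to record is that the two balls are disjoint: if $x \in \mathcal{B}(\tfrac34 e_1,\tfrac14)$, then $|x| \ge \tfrac34 - |x - \tfrac34 e_1| > \tfrac12$, so $\mathcal{B}(\tfrac34 e_1,\tfrac14) \cap \mathcal{B}(0,\tfrac12) = \emptyset$. Consequently, any $f$ of the form $\Indicator_{\mathcal{B}(3 e_1/4,\,1/4)} + g \cdot \Indicator_{\mathcal{B}(0,1/2)}$ automatically satisfies the two pointwise constraints in the statement, irrespective of the choice of $g$.

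Set $m_\sigma := \int_{\mathcal{B}(3 e_1/4,\,1/4)} x^\sigma \, dx$ for $\sigma \in \N_0^d$ with $|\sigma| \le s$; these are finitely many real numbers. It then suffices to produce a polynomial $q$ of degree at most $s$ satisfying $\int_{\mathcal{B}(0,1/2)} q(x)\, x^\sigma \, dx = -m_\sigma$ for all such $\sigma$: for then $f := \Indicator_{\mathcal{B}(3 e_1/4,\,1/4)} + q \cdot \Indicator_{\mathcal{B}(0,1/2)}$ is measurable and bounded (a polynomial is bounded on the bounded set $\mathcal{B}(0,\tfrac12)$), and $\int_{\R^d} f(x)\, x^\sigma \, dx = m_\sigma + \int_{\mathcal{B}(0,1/2)} q(x)\, x^\sigma \, dx = 0$ for every $|\sigma| \le s$, while the support and normalization conditions hold by the previous paragraph.

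To construct $q$, consider the finite-dimensional space $\mathcal{P}_s$ of real polynomials on $\R^d$ of degree at most $s$, equipped with the symmetric bilinear form $\langle p, r \rangle := \int_{\mathcal{B}(0,1/2)} p(x)\, r(x) \, dx$. Since $\mathcal{B}(0,\tfrac12)$ is a nonempty open set, a polynomial vanishing on it vanishes identically, so $\langle \cdot, \cdot \rangle$ is positive definite; equivalently, the Gram matrix $\big( \int_{\mathcal{B}(0,1/2)} x^{\sigma} x^{\tau} \, dx \big)_{|\sigma|, |\tau| \le s}$ of the monomial basis is invertible. Hence the linear map $\mathcal{P}_s \to \R^{N}$, $p \mapsto \big( \int_{\mathcal{B}(0,1/2)} p(x)\, x^\sigma \, dx \big)_{|\sigma|\le s}$ (with $N := \#\{\sigma \in \N_0^d : |\sigma| \le s\}$) is onto, so there exists $q \in \mathcal{P}_s$ with the prescribed moments $-m_\sigma$. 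This finishes the construction. There is no real obstacle in this argument; the only point of substance is the linear independence of monomials on a nonempty open set, which yields the invertibility of the Gram matrix, and the rest is a bookkeeping of moments.
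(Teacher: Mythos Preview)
Your proof is correct and follows essentially the same approach as the paper: both write $f$ as the indicator of $\mathcal{B}(\tfrac34 e_1,\tfrac14)$ plus a correction supported in $\mathcal{B}(0,\tfrac12)$ chosen to cancel the moments, and both reduce the existence of the correction to the fact that a nonzero polynomial cannot vanish on a nonempty open set. The only cosmetic difference is that the paper looks for the correction in $L^\infty(\mathcal{B}(0,\tfrac12))$ and proves surjectivity of the moment map by a duality argument, whereas you restrict to polynomials from the outset and phrase the same linear-algebra fact as positive definiteness of the Gram matrix.
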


\begin{proof}
Define $n_{s} := |\{ \sigma \in \N_0^d : |\sigma| \leq s \}|$ and $v \in \R^{n_s}$
by $v_{\sigma} := - \int_{\mathcal{B}(\frac{3}{4} e_1, \frac{1}{4})} x^{\sigma} \; dx$.
Then, the linear function
\[
  \theta : \quad
  L^{\infty} (\mathcal{B}(0,1/2)) \to \R^{n_s}, \quad
  h \mapsto \bigg( \int_{\mathcal{B}(0,\frac{1}{2})} h(x) x^{\sigma} \; dx \bigg)_{|\sigma| \leq s}
\]
is surjective.
Indeed, if this was not the case, since $\mathrm{range}(\theta)$ is finite-dimensional,
there would exist $c \in \R^{n_s}$ with $c \neq 0$ but $c \perp \mathrm{range}(\theta)$,
which then implies for the nonzero polynomial $p(x) := \sum_{|\sigma| \leq s} c_\sigma \, x^\sigma$
that $0 = \int_{\CalB (0, \frac{1}{2})} h(x) p(x) \, d x$ for all $h \in L^\infty (\CalB(0, \frac{1}{2}))$,
which is absurd.

Hence, there exists $h \in L^{\infty}(\mathcal{B}(0, 1/2))$ such that $\theta(h) = v$.
Define $f : \R^d \to \mathbb{C}$ by
\[
 f (x) =
 \begin{cases}
  h(x) & \text{if }  x \in \mathcal{B}(0, \frac{1}{2}), \\
  1    & \text{if }  x \in \mathcal{B}(\frac{3}{4} e_1, \frac{1}{4}), \\
  0    & \text{if } x \notin \mathcal{B}(0, \frac{1}{2}) \cup \mathcal{B}(\frac{3}{4} e_1, \frac{1}{4}).
 \end{cases}
\]
Then, given $\sigma \in \N_0^d$ with $|\sigma| \leq s$, we have
\[
  \int_{\R^d} f(x) x^{\sigma} \; dx
  = \int_{\mathcal{B}(0, \frac{1}{2})} h(x) x^{\sigma} \; dx
    + \int_{\mathcal{B}(\frac{3}{4} e_1, \frac{1}{4})} x^{\sigma} \; dx
  = 0,
\]
as desired.
\end{proof}

The next lemma is part of the folklore. However, since we could not locate a reference
and since the properties derived in the lemma are crucial for our arguments
(see \Cref{lem:NormEquivalence}), we provide a short proof.

\begin{lemma}\label{lem:Completeness}
  Let $A \in \GL (d, \R)$ be expansive and let $\alpha \in \R$, $p \in (0,\infty)$,
  and $q \in (0,\infty]$.
  Then the following assertions hold:
  \begin{enumerate}
    \item[(i)] The quasi-norm $\| \cdot \|_{\TLA}$ is an $r$-norm for $r := \min \{ 1, p, q \}$,
               that is,
              \[
                \| f_1 + f_2 \|_{\TLA}^r \leq \| f_1 \|_{\TLA}^r + \| f_2 \|_{\TLA}^r
              \]
              for all $f_1, f_2 \in \TLA$;

    \item[(ii)] The space $\TLA$ is complete with respect to the quasi-norm $\| \cdot \|_{\TLA}$;

    \item[(iii)] If $(f_n)_{n \in \N}$ is a sequence in $\TLA$ satisfying $f_n \to f_0$ with convergence in $\TLA$,
                 then $\langle f_n, \phi \rangle \!\to\! \langle f_0, \phi \rangle$
                 for all $\phi \in \Fourier (C_c^\infty (\R^d))$.
  \end{enumerate}
\end{lemma}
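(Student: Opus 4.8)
The plan is to prove the three assertions in the order (i), then the continuous inclusion $\TLA \hookrightarrow \Schwartz'(\R^d)$, and finally deduce (iii) and (ii) from the latter. For part~(i), I would argue purely formally. Set $r := \min\{1,p,q\}$ and recall that a quasi-normed space satisfying the $s$-triangle inequality for some $s \in (0,1]$ also satisfies the $r$-triangle inequality whenever $r \le s$ (compose with the subadditive map $t \mapsto t^{r/s}$); hence $\ell^q$ and $L^p$ both obey the $r$-triangle inequality. Writing $G_g(x) := \big\|(|\det A|^{\alpha i}|g \ast \varphi_i^A(x)|)_{i \in \N_0}\big\|_{\ell^q}$ and using linearity of convolution together with subadditivity of $|\cdot|$, the $r$-triangle inequality of $\ell^q$ gives $G_{f_1 + f_2}(x)^r \le G_{f_1}(x)^r + G_{f_2}(x)^r$ pointwise in $x$; taking the $L^p$-quasi-norm of the $1/r$-th power and invoking the $r$-triangle inequality of $L^p$ (equivalently, that $L^{p/r}$ is a genuine Banach space, since $p/r \ge 1$) yields $\|f_1 + f_2\|_{\TLA}^r \le \|f_1\|_{\TLA}^r + \|f_2\|_{\TLA}^r$. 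In particular $\TLA$ is a linear space.

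\textbf{Key estimate.} The heart of the matter is the bound $|\langle f, \phi\rangle| \le C_\phi \|f\|_{\TLA}$ for $f \in \TLA$ and $\phi \in \Schwartz(\R^d)$, with $C_\phi$ controlled by finitely many Schwartz seminorms of $\phi$. I would use the Calder\'on-type reproducing formula from the proof of \Cref{prop:TL_hardy} (see \cite[Section 3.3]{bownik2006atomic}): there is a second $A$-analyzing pair $(\psi, \Psi)$ with $f = f \ast \Phi \ast \Psi^* + \sum_{i \ge 1} f \ast \varphi_i^A \ast (\psi^*)_i^A$ in $\Schwartz'$; together with the identity $\langle u \ast v, \phi\rangle = \langle u, \widetilde{v} \ast \phi\rangle$, $\widetilde{v}(x) := v(-x)$ (up to complex conjugation, depending on the pairing convention), this gives $\langle f, \phi\rangle = \langle f \ast \Phi, \widetilde{\Psi^*} \ast \phi\rangle + \sum_{i \ge 1} \langle f \ast \varphi_i^A, \widetilde{(\psi^*)_i^A} \ast \phi\rangle$. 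Each summand is then estimated by pairing an $L^\infty$- with an $L^1$-function: on the one hand, the defining quasi-norm gives $\||\det A|^{\alpha i}(f \ast \varphi_i^A)\|_{L^p} \le \|f\|_{\TLA}$, and since $f \ast \varphi_i^A$ is band-limited to $Q_i^{A^*} \subseteq \CalB(0, c\,\lambda_+(A)^i)$, the Plancherel--P\'olya--Nikol'skii inequality for band-limited $L^p$-functions (e.g.\ via the convolution relation \cite[Theorem 3.4]{voigtlaender2023embeddings}) yields $\|f \ast \varphi_i^A\|_{L^\infty} \lesssim C_1^{\,i}\,\|f\|_{\TLA}$ for some $C_1 = C_1(A,\alpha,p,d) \ge 1$; on the other hand, as $\suppc \widehat{\psi}$ stays away from the origin, $\widetilde{(\psi^*)_i^A}$ has Fourier support in $\{|\xi| \gtrsim \lambda_-(A)^i\}$, so $\widetilde{(\psi^*)_i^A} \ast \phi$ is a Schwartz function with $\|\widetilde{(\psi^*)_i^A} \ast \phi\|_{L^1} \le C_{M,\phi}\,\lambda_-(A)^{-iM}$ for every $M \in \N$. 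Choosing $M$ with $\lambda_-(A)^M > C_1$ makes $\sum_i C_1^{\,i}\lambda_-(A)^{-iM}$ converge, which gives the claim (the $f \ast \Phi$ term is handled identically). This establishes $\TLA \hookrightarrow \Schwartz'$ continuously.

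\textbf{Parts (iii) and (ii).} For (iii), if $\phi \in \Fourier(C_c^\infty(\R^d))$ then $\widehat{\phi}$ has compact support, hence $\widetilde{(\psi^*)_i^A} \ast \phi = 0$ for all but finitely many $i$ and the series above is finite; applying the key estimate to $f_n - f_0 \in \TLA$ gives $|\langle f_n, \phi\rangle - \langle f_0, \phi\rangle| \le C_\phi \|f_n - f_0\|_{\TLA} \to 0$. For (ii), let $(f_n)_n$ be Cauchy in $\TLA$ (equivalently, in the metric $d(f,g) = \|f-g\|_{\TLA}^r$). The key estimate shows $(f_n)_n$ is Cauchy in $\Schwartz'$, which is sequentially complete (by the Banach--Steinhaus theorem on the Fr\'echet space $\Schwartz(\R^d)$), so $f_n \to f_0$ in $\Schwartz'$ for some $f_0 \in \Schwartz'$. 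Since $f \mapsto (f \ast \varphi_i^A)(x)$ is continuous on $\Schwartz'$ for each fixed $x$ and $i$, one has $(f_n \ast \varphi_i^A)(x) \to (f_0 \ast \varphi_i^A)(x)$ pointwise, so Fatou's lemma applied to the $L^p$-integral and (for $q < \infty$) to the $\ell^q$-sum — the case $q = \infty$ using $\sup_i \liminf_n \le \liminf_n \sup_i$ — gives $\|f_0\|_{\TLA} \le \liminf_n \|f_n\|_{\TLA} < \infty$, i.e.\ $f_0 \in \TLA$. Finally, given $\eps > 0$ choose $N$ with $\|f_n - f_m\|_{\TLA} < \eps$ for $n, m \ge N$; fixing $n \ge N$ and letting $m \to \infty$ in another application of Fatou (using $(f_n - f_m) \ast \varphi_i^A \to (f_n - f_0) \ast \varphi_i^A$ pointwise) yields $\|f_n - f_0\|_{\TLA} \le \liminf_m \|f_n - f_m\|_{\TLA} \le \eps$, so $f_n \to f_0$ in $\TLA$. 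I expect the key estimate to be the main obstacle: one must combine the band-dependent $L^p \to L^\infty$ bound for $f \ast \varphi_i^A$ with the (faster-than-any-exponential) $L^1$-decay of $\widetilde{(\psi^*)_i^A} \ast \phi$, and check that these competing exponential rates produce a convergent series; the remaining ingredients (Fatou, Banach--Steinhaus, the elementary quasi-norm inequalities) are soft.
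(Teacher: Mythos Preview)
Your proof is correct and, for parts (ii) and (iii), takes a genuinely different route from the paper. Part~(i) is essentially identical to the paper's argument. For (ii), the paper bypasses the continuous embedding $\TLA \hookrightarrow \Schwartz'$ entirely: it invokes the $\varphi$-transform machinery of Bownik--Ho \cite[Section~3.3]{bownik2006atomic} to obtain bounded linear maps $S : \TLA \to \TLseqA$ and $T : \TLseqA \to \TLA$ with $T \circ S = \mathrm{id}$, where $\TLseqA$ is the associated sequence space; completeness of $\TLseqA$ (as a solid quasi-Banach lattice with the Fatou property) then transfers to $\TLA$ via the retraction. For (iii), the paper argues directly: for $\phi \in \Fourier(C_c^\infty)$ one has $\langle f_n, \phi\rangle = \sum_{i \in I_\phi} \langle f_n \ast \varphi_i^A, \phi\rangle$ with $I_\phi$ finite, and each term is controlled using the polynomial-weighted sup-norm bound of \cite[Corollary~3.2]{bownik2005atomic} (the same Plancherel--P\'olya--Nikol'skii idea you use, but without needing to sum an infinite series). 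Your approach is more self-contained---it does not rely on the atomic decomposition---and actually establishes the stronger statement that $\TLA \hookrightarrow \Schwartz'$ continuously, from which both (ii) (via Fatou) and (iii) drop out immediately. The paper's route for (ii) is slicker if one already has the $\varphi$-transform available, but yours yields more and requires less external input.
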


\begin{proof}
(i) Let $r := \min \{ 1, p, q \}$.
To ease notation, define $C_f (x) := (|\det A|^{\alpha i} |f \ast \varphi_i^A (x)|)_{i \in \N_0}$
for $f \in \mathcal{S}'$ and $x \in \R^d$.
Then
\begin{align*}
 \| f \|_{\TL}^r
 = \big\| \| C_f (\cdot)  \|_{\ell^q} \big\|_{L^p}^r
 = \big\| \big\| \big( C_f (\cdot) \big)^r \big\|_{\ell^{q/r}}^{1/r} \big\|_{L^p}^r
 = \big\| \big\| \big( C_f (\cdot) \big)^r \big\|_{\ell^{q/r}} \big\|_{L^{p/r}}.
\end{align*}
Since $r \leq 1$, we have $C_{f_1 + f_2}^r \leq C_{f_1}^r + C_{f_2}^r$.
Since moreover $q/r, p/r \geq 1$, applications of the triangle inequality yield
\[
   \| f_1 + f_2 \|_{\TL}^r
   \leq \big\| \big\| \big( C_{f_1} (\cdot) \big)^r \big\|_{\ell^{q/r}} \big\|_{L^{p/r}}
        + \big\| \big\| \big( C_{f_2} (\cdot) \big)^r \big\|_{\ell^{q/r}} \big\|_{L^{p/r}}
   =    \| f_1 \|_{\TL}^r + \| f_2 \|_{\TL}^r,
\]
as required.
\\~\\
(ii) Let $\mathcal{D} := \{ D = A^j ([0,1]^d + k) : j \in \Z, k \in \Z^d \}$
and $\mathcal{D}_0 := \{ D \in \mathcal{D} : \Lebesgue{D} \leq 1 \}$.
For a complex-valued sequence $c = (c_D)_{D \in \mathcal{D}_0}$, define
\[
  \| c \|_{\TLseqA}
  := \bigg\|
       \bigg(
         \sum_{D \in \mathcal{D}_0}
         \big( \Lebesgue{D}^{-\alpha - 1/2} |c_D | \Indicator_D \big)^q
       \bigg)^{1/q}
     \bigg\|_{L^p}
  \in [0,\infty],
\]
and set $\TLseqA := \{ c \in \mathbb{C}^{\mathcal{D}_0} : \| c \|_{\TLseqA} < \infty \}$.
Then it is easily verified that $\TLseqA$, with respect to the quasi-norm $\| \cdot \|_{\TLseqA}$,
is a (solid) quasi-normed function space on $\mathcal{D}_0$,
in the sense of \cite[Section 2.2]{VoigtlaenderPhDThesis} and \cite[Section 2]{lorist2023banach}.
Moreover, $\TLseqA$ satisfies the Fatou property, and hence it is complete,
see, e.g., \cite[Lemma 2.2.15]{VoigtlaenderPhDThesis} and \cite[Proposition 2.2]{lorist2023banach}
(combined with \cite[Remark 2.1]{lorist2023banach}).

By \cite[Section 3.3]{bownik2006atomic}, there exist two bounded linear maps
$S : \TLA \to \TLseqA$ and $T : \TLseqA \to \TLA$ satisfying $T \circ S = \mathrm{id}_{\TLA}$.
Hence, if $(f_n)_{n \in \N}$ is a Cauchy sequence in $\TLA$,
then the sequence $(c^{(n)})_{n \in \N}$ given by $c^{(n)} = S f_n \in \TLseqA$ is Cauchy in $\TLseqA$,
and thus  converges to some $c \in \TLseqA$. Since $T$ is bounded, this easily implies that
$f_n = T (Sf_n) = T (c^{(n)}) \to Tc \in \TLA$, which shows that $\TLA$ is complete.
\\~\\
(iii)
Choose an $A$-analyzing pair $(\varphi, \Phi)$  that satisfies properties
\ref{enu:AnalyzingPairSupportCondition}-\ref{enu:AnalyzingPairPartitionOfUnity}.
Let $(f_n)_{n \in \N}$ be a sequence in $\TLA$ that converges in $\TLA$ to some $f_0 \in \TLA$.
Let $\phi \in \mathcal{F}(C_c^{\infty} (\R^d))$ and note by elementary properties
of the Fourier transform (see \cite[Theorem 7.19]{RudinFA})
and because of $\sum_{i \in \N_0} \widehat{\varphi_i^A} \equiv 1$
(see property \ref{enu:AnalyzingPairPartitionOfUnity}) that
\begin{align*}
  \langle f_n, \phi \rangle
  = \langle \widehat{f_n}, \mathcal{F}^{-1} \phi \rangle
  = \sum_{i \in \N_0} \langle \widehat{f_n}, \widehat{\varphi_i^A} \cdot \mathcal{F}^{-1} \phi \rangle
  = \sum_{i \in \N_0} \langle f_n \ast \varphi_i^A, \phi \rangle
\end{align*}
for any $n \in \N_0$.
Moreover, there exists a finite set $I_\phi \subseteq \N_0$ independent of $n$,
such that $\langle f_n \ast \varphi_i^A, \phi \rangle = 0$ for all $n \in \N_0$
and all $i \in \N_0 \setminus I_\phi$.
Thus, it remains to show 
\[
  \langle f_n \ast \varphi_i^A , \phi \rangle
  \to \langle f_0 \ast \varphi_i^A, \phi \rangle
\]
as $n \to \infty$, for all $i \in \N_0$.
To see this, we will use \cite[Corollary 3.2]{bownik2005atomic},
which yields a constant $C>0$ and some $N \in \N$ such that
\[
  \sup_{x \in \R^d} \frac{|h(x)|}{(1+|x|)^N}
  \leq C^{i + 1} \| h \|_{L^p}
  \quad \text{
          for all $h \in \mathcal{S}'(\R^d)$
          with $\suppc \widehat{h} \subseteq \suppc \widehat{\varphi_i^A}$.
        }
\]
Hence, in particular,
\begin{align*}
  \sup_{x \in \R^d} \frac{|(f_0 \ast \varphi_i^A - f_n \ast \varphi_i^A)(x)|}{(1+|x|)^N}
  &\leq C^{i+1} \| f_0 \ast \varphi_i^A - f_n \ast \varphi_i^A \|_{L^p} \\
  &\leq C^{i+1} |\det A|^{- \alpha i} \| f_0 - f_n \|_{\TLA},
\end{align*}
which easily implies that $f_n \ast \varphi_i^A \to f_0 \ast \varphi_i^A$
in $\mathcal{S}' (\R^d)$, as $n \to \infty$.
Thus, we see that
$\langle f_n \ast \varphi_i^A, \phi \rangle \to \langle f_0 \ast \varphi_i^A, \phi \rangle$,
as desired.
\end{proof}

\section*{Acknowledgments}
For J.v.V., this research was funded in whole or in part by the Austrian Science Fund (FWF): 10.55776/J4555.
J.v.V. is grateful for the hospitality and support
of the Katholische Universität Eichstätt-Ingolstadt during his visit.
F. V. acknowledges support by the Hightech Agenda Bavaria.


\begin{thebibliography}{10}

\bibitem{betancor2010anisotropic}
J.~J. Betancor and W.~Dami{\'a}n.
\newblock Anisotropic local {Hardy} spaces.
\newblock {\em J. Fourier Anal. Appl.}, 16(5):658--675, 2010.

\bibitem{bownik2003anisotropic}
M.~Bownik.
\newblock Anisotropic {H}ardy spaces and wavelets.
\newblock {\em Mem. Amer. Math. Soc.}, 164(781):vi+122, 2003.

\bibitem{bownik2005atomic}
M.~Bownik.
\newblock Atomic and molecular decompositions of anisotropic {Besov} spaces.
\newblock {\em Math. Z.}, 250(3):539--571, 2005.

\bibitem{bownik2007anisotropic}
M.~Bownik.
\newblock Anisotropic {T}riebel-{L}izorkin spaces with doubling measures.
\newblock {\em J. Geom. Anal.}, 17(3):387–424, 2007.

\bibitem{bownik2006atomic}
M.~Bownik and K.-P. Ho.
\newblock Atomic and molecular decompositions of anisotropic
  {T}riebel-{L}izorkin spaces.
\newblock {\em Trans. Amer. Math. Soc.}, 358(4):1469--1510, 2006.

\bibitem{cheshmavar2020classification}
J.~Cheshmavar and H.~F\"{u}hr.
\newblock A classification of anisotropic {B}esov spaces.
\newblock {\em Appl. Comput. Harmon. Anal.}, 49(3):863–896, 2020.

\bibitem{hu2013littlewood}
G.~Hu.
\newblock Littlewood-{Paley} characterization of weighted anisotropic {Hardy}
  spaces.
\newblock {\em Taiwanese J. Math.}, 17(2):675--700, 2013.

\bibitem{koppensteiner2023anisotropic1}
S.~Koppensteiner, J.~T. Van~Velthoven, and F.~Voigtlaender.
\newblock Anisotropic {T}riebel--{L}izorkin spaces and wavelet coefficient
  decay over one-parameter dilation groups, {I}.
\newblock {\em Monats. Math.}, 201(2):375--429, 2023.

\bibitem{koppensteiner2023classification}
S.~Koppensteiner, J.~T. van Velthoven, and F.~Voigtlaender.
\newblock Classification of anisotropic {Triebel}-{Lizorkin} spaces.
\newblock {\em Math. Ann.}, 389(2):1883--1923, 2024.

\bibitem{liu2019littlewood1}
J.~Liu, D.~Yang, and W.~Yuan.
\newblock Littlewood-{P}aley characterizations of weighted anisotropic
  {T}riebel-{L}izorkin spaces via averages on balls {I}.
\newblock {\em Z. Anal. Anwend.}, 38(4):397--418, 2019.

\bibitem{lorist2023banach}
E.~Lorist and Z.~Nieraeth.
\newblock Banach function spaces done right.
\newblock {\em Indag. Math., New Ser.}, 35(2):247--268, 2024.

\bibitem{RudinFA}
W.~Rudin.
\newblock {\em Functional analysis}.
\newblock International Series in Pure and Applied Mathematics. McGraw-Hill,
  Inc., New York, second edition, 1991.

\bibitem{triebel2004wavelet}
H.~{T}riebel.
\newblock {W}avelet bases in anisotropic function spaces.
\newblock In {\em {F}unction {S}paces, {D}ifferential {O}perators and
  {N}onlinear {A}nalysis. {C}onference held in {S}vratka ({M}ay 2004)}, pages
  370--387, 2004.

\bibitem{triebel2006theory}
H.~Triebel.
\newblock {\em Theory of function spaces. {III}}, volume 100 of {\em Monogr.
  Math., Basel}.
\newblock Basel: Birkh{\"a}user, 2006.

\bibitem{triebel2010theory}
H.~Triebel.
\newblock {\em Theory of function spaces}.
\newblock Mod. Birkh{\"a}user Classics. Basel: Birkh{\"a}user, reprint of the
  1983 original edition, 2010.

\bibitem{VoigtlaenderPhDThesis}
F.~Voigtlaender.
\newblock {\em Embedding Theorems for Decomposition Spaces with Applications to
  Wavelet Coorbit Spaces}.
\newblock PhD thesis, RWTH Aachen University, 2015.
\newblock \url{http://publications.rwth-aachen.de/record/564979}.

\bibitem{voigtlaender2023embeddings}
F.~Voigtlaender.
\newblock {\em Embeddings of decomposition spaces}, volume 1426 of {\em Mem.
  Am. Math. Soc.}
\newblock Providence, RI: American Mathematical Society (AMS), 2023.

\bibitem{WolffLecturesOnHarmonicAnalysis}
T.~H. Wolff.
\newblock {\em Lectures on harmonic analysis}, volume~29 of {\em University
  Lecture Series}.
\newblock American Mathematical Society, Providence, RI, 2003.

\end{thebibliography}
\end{document}